\newcommand{\ZZ}{{\mathbb Z}}
\newcommand{\CC}{{\mathbb C}}
\newcommand{\RR}{{\mathbb R}}
\newcommand{\FF}{{\mathbb F}}
\newcommand{\TT}{{\mathbb T}}
\newcommand{\D}{{\mathcal D}}
\newcommand{\C}{{\mathcal C}}
\newcommand{\E}{{\mathcal E}}
\newcommand{\F}{{\mathcal F}}
\newtheorem{thm}{Theorem}[section]
\newtheorem{defi}[thm]{Definition}
\newtheorem{prop}[thm]{Proposition}
\newtheorem{lemma}[thm]{Lemma}
\newtheorem{cor}[thm]{Corollary}
\theoremstyle{definition}
\newtheorem{rem}[thm]{Remark}}
\theoremstyle{definition}
\newtheorem{exa}[thm]{Example}}
\begin{document}
\title{Floor decompositions of tropical curves : the planar case}
\date{\today}
\author{Erwan Brugallé}
\address{Université Pierre et Marie Curie,  Paris 6, 175 rue du Chevaleret, 75 013 Paris, France}
\email{brugalle@math.jussieu.fr}
\author{Grigory Mikhalkin}
\address{Université de Genève, 2-4 rue du Lièvre, Case postale 64,
  1211 Genève, Suisse}
\email{grigory.mikhalkin@unige.ch}

\subjclass[2000]{Primary 14N10, 14P05; Secondary 14N35, 14N05}
\keywords{tropical geometry, enumerative geometry, Welschinger
  invariants, Gromov-Witten invariants}

\begin{abstract}
In \cite{Br7} we announced a formula to compute Gromov-Witten and
Welschinger invariants of some toric varieties, in terms of combinatorial
objects called floor diagrams.
We give here detailed  proofs in the tropical geometry framework, in the
case when the ambient variety is a complex surface, and give some
examples of computations using floor diagrams.
The focusing on dimension 2 is motivated by the special combinatoric
of floor diagrams compared to arbitrary dimension.

We treat a general toric surface case in this dimension:
the curve is given by an arbitrary lattice polygon and
include computation of Welschinger invariants with pairs
of conjugate points. See also \cite{FM} for combinatorial
treatment of floor diagrams in the projective case.
\end{abstract}

\maketitle

\section{Introduction}

Let $\Delta$ be a lattice polygon in $\RR^2$, $g$ a non-negative
integer, and $\omega$ a generic configuration of
$Card(\partial\Delta\cap\ZZ^2)-1+g$ points in $(\CC^*)^2$. Then, there exists a
finite number $N(\Delta,g)$ of complex algebraic curves in $(\CC^*)^2$
of genus $g$ and Newton polygon $\Delta$ passing through all
points in $\omega$. Moreover, $N(\Delta,g)$ doesn't depend on $\omega$
as long as it is generic. If the toric surface $Tor(\Delta)$
corresponding to $\Delta$ is
Fano, then the numbers $N(\Delta,g)$ are known as
\textit{Gromov-Witten invariants} of the surface
$Tor(\Delta)$. Kontsevich first computed in \cite{KonMan1} the series
$N(\Delta,0)$
for convex surfaces $Tor(\Delta)$, and Caporaso and Harris computed in
\cite{CapHar1}
all $N(\Delta,g)$'s where  $Tor(\Delta)$ is Fano or a Hirzebruch
surface.

Suppose now that the surface $Tor(\Delta)$ is equipped with a real
structure $conj$, i.e. $conj$ is a antiholomorphic involution on $Tor(\Delta)$. For
example, one can
take the tautological real structure given in $(\CC^*)^2$ by the
standard complex conjugation. Suppose moreover that $\omega$ is a real
configuration, i.e. $conj(\omega)=\omega$.  Then it is natural to
study the set $\RR\C(\omega)$ of real algebraic curves in $(\CC^*)^2$
of genus $g$ and Newton polygon $\Delta$ passing through all
points in $\omega$. It is not hard to see that, unlike in the
enumeration of complex curves, the cardinal of this set depends
heavily on $\omega$. However, Welschinger proved in \cite{Wel1}
that when $g=0$ and $Tor(\Delta)$ is Fano, one can define an
invariant. A real nodal curve $C$ in $Tor(\Delta)$ has two types
of real nodes, isolated ones (locally given by the equation
$x^2+y^2=0$) and non-isolated ones (locally given by the equation
$x^2-y^2=0$). Welschinger defined the \textit{mass} $m(C)$ of the curve $C$
as the number of isolated nodes of $C$, and proved that if $g=0$ and
$Tor(\Delta)$ is Fano, then the number
$$W(\Delta,r)=\sum_{C\in  \RR\C(\omega) }(-1)^{m(C)} $$
depends only on $\Delta$ and the number $r$ of pairs of complex
conjugated points in $\omega$.

 \textit{Tropical
  geometry} is an algebraic geometry over the tropical
semi-field $\TT=\RR\cup\{-\infty\}$ where the tropical addition is
taking the maximum, and the tropical multiplication is the classical
addition. As in the classical setting, given $\Delta$  a lattice
polygon in $\RR^2$, $g$ a non-negative
integer, and $\omega$ a generic configuration of
$Card(\partial\Delta\cap\ZZ^2)-1+g$ points in $(\RR^*)^2$, we can
enumerate
tropical curves in $\RR^2$ of genus $g$ and Newton polygon $\Delta$
passing through all points in $\omega$. It was proved in \cite{Mik1} that
provided that we count tropical curves with an appropriate
multiplicity, then the number of tropical curves does not depend on
$\omega$ and is equal to $N(\Delta,g)$. Moreover,  tropical
geometry allows also one to compute quite easily Welschinger
invariants $W(\Delta,r)$ of Fano toric surfaces equipped with the
tautological real structure (see \cite{Mik1}, \cite{Sh8}). This has
been the first systematic method
to compute Welschinger invariants of these surfaces.

In \cite{Br7}, we announced a formula to compute the numbers
$N(\Delta,g)$  and $W(\Delta,r)$ easily in terms of combinatorial
objects called
\textit{floor diagrams}. This diagrams encode  degeneracies of
tropical curves passing through some special configuration of
points. This paper is devoted to explain how floor diagrams can be
used to compute the numbers $N(\Delta,g)$  and $W(\Delta,r)$ (Theorems
\ref{NFD} and \ref{WFD}), and to
give some examples of concrete computations (section \ref{application}).
In \cite{Br7}, we announced a more general formula computing
Gromov-Witten and Welschinger invariants of some toric varieties of
any dimension. However, floor diagrams corresponding to plane curves
have a special combinatoric compared with the general case, and
deserve some special attention. Details of the proof of the general
formula given in \cite{Br7} will appear soon.

\vspace{2ex}

In section \ref{convention} we remind some convention we use throughout
this paper about graphs and lattice polygons. Then, we state in
section \ref{floor diagrams} our main
formulas computing the numbers  $N(\Delta,g)$  and $W(\Delta,r)$ when
$\Delta$ is a \textit{$h$-transverse} polygon.  We present tropical enumerative
geometry in section
 \ref{enum trop geo}, and prove our main formulas in section
 \ref{Floor trop}. We give some examples of computations using floor
 diagrams in section
\ref{application}, and we end this paper with some remarks in section
 \ref{further}.

\section{convention}\label{convention}

\subsection{Graphs}

In this paper, graphs are considered as (non necessarily compact) abstract
1 dimensional topological objects. Remind that a \textit{leaf} of a
graph is an edge which is non-compact or adjacent
 to a 1-valent
 vertex.
Given a graph $\Gamma$, we denote
by

\begin{itemize}
\item $\text{Vert}(\Gamma)$ the set of
its vertices,
\item  $\text{End}(\Gamma)$ the set of
its 1-valent vertices,
\item $\text{Edge}(\Gamma)$
 the set of  its edges,
\item $\text{Edge}^\infty(\Gamma)$
 the set of its non-compact leaves.
\end{itemize}

If in addition $\Gamma$ is oriented so that there are no oriented cycles,
then there exists a natural partial
ordering on $\Gamma$ : an
 element $a$ of $\Gamma$ is greater than another element  $b$ if there
 exists an oriented path from $b$ to $a$. In this case, we denote by
$\text{Edge}^{+\infty}(\Gamma)$
(resp. $\text{Edge}^{-\infty}(\Gamma)$)  the set of
edges  $e$ in
$\text{Edge}^{\infty}(\Gamma)$ such that no vertex of $\Gamma$ is
greater (resp. smaller) than a point of $e$.

We say that $\Gamma$ is a \textit{weighted graph}  if each edge of
$\Gamma$ is prescribed a natural weight, i.e.  we are given a function
$\omega:\text{Edge}(\Gamma)\rightarrow \mathbb N^*$. 
Weight and orientation
allow
one to define the \textit{divergence} at the vertices. Namely, for a
vertex $v\in\text{Vert}(\Gamma)$ we define the divergence
$\text{div}(v)$ to be the sum of the weights of all incoming edges
minus the sum of the weights of all outgoing edges.

\subsection{Lattice polygons} We remind that a \textit{primitive
  integer vector}, or shortly a primitive vector, is a vector
$(\alpha,\beta)$ in $\ZZ^2$ whose coordinates are relatively prime.
A \textit{lattice polygon} $\Delta$ is a convex polygon in $\mathbb
R^2$ whose vertices
are in $\ZZ^2$. For such a polygon, we define
$$\partial_l\Delta=\{p\in\partial \Delta \ | \  \forall t>0, \ p + (-t,0)\notin
\Delta\},$$
$$\partial_r \Delta=\{p\in\partial \Delta \ | \  \forall t>0, \ p + (t,0)\notin
\Delta\}. $$

A lattice polygon $\Delta$ is said to be \textit{$h$-transverse} if
 any primitive vector parallel to an edge of $\partial_l\Delta$ or
 $\partial_r\Delta$  is of
 the form $(\alpha,\pm 1)$ with $\alpha$ in $\ZZ$.

 If $e$ is a lattice segment in $\RR^2$, we define the \textit{integer
   length}
of $e$ by
$l(e)=Card(e\cap \ZZ^2)-1$.
If $\Delta$ is a $h$-transverse polygon,
we define its  \textit{left directions} (resp. \textit{right
  directions}), denoted by $d_l(\Delta)$ (resp. $d_r(\Delta)$), as
the unordered list that consists of the elements $\alpha$ repeated  $l(e)$ times for
all edge vectors $e=\pm l(e)(\alpha,-1)$ of $\partial_l\Delta$
(resp.  $\partial_r\Delta$).
 If $\Delta$ has a bottom (resp. top)
horizontal edge $e$ then we set $d_-(\Delta)=l(e)$
(resp. $d_+(\Delta)=l(e)$) and $d_-(\Delta)=0$
(resp. $d_+(\Delta)=0$) otherwise.

There is a natural 1-1 correspondence between quadruples
$(d_l,d_r,d_-,d_+)$ and $h$-transverse polygons $\Delta$
considered up to translation
as the polygon can be easily reconstructed from such a quadruple.

We have
\begin{equation}\label{eq1}
Card(d_l(\Delta))  = Card( d_r(\Delta))   =Card(\partial_l\Delta\cap\ZZ^2)-1 =
Card(\partial_r\Delta\cap\ZZ^2) -1
\end{equation}
and
\begin{equation}\label{eq2}
2Card(d_l(\Delta))  +d_-(\Delta) +d_+(\Delta)=
Card(\partial\Delta\cap \ZZ^2)  .
\end{equation}
We call the cardinality $Card(d_l(\Delta))$ the {\em height} of $h$-transversal
polygon $\Delta$.

\begin{exa}
Some $h$-transverse polygons are depicted in Figure \ref{ex poly}. By
abuse of notation, we write unordered lists within brackets $\{\}$.
\end{exa}

\begin{figure}[h]
\centering
\begin{tabular}{cccc}
\includegraphics[height=2cm, angle=0]{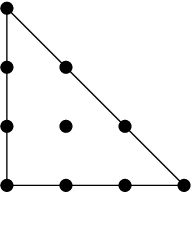}&
\includegraphics[height=2cm, angle=0]{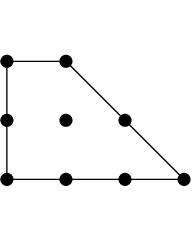}&
\includegraphics[height=2cm, angle=0]{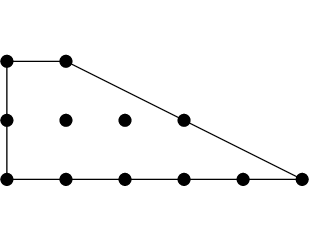}&
\includegraphics[height=2cm, angle=0]{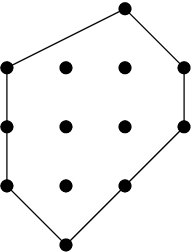}
\\$\begin{array}{ll} a)& d_l=\{0,0,0\}
\\&  d_r =\{1,1,1\}
\\ & d_-=3
\\& d_+=0
\end{array}$

 &$\begin{array}{ll} b)& d_l=\{0,0\}
 \\&  d_r=\{1,1\}
 \\ & d_-=3
 \\& d_+=1
 \end{array}$

&$\begin{array}{ll} c)& d_l=\{0,0\}
\\&  d_r=\{2,2\}
\\ & d_-=5
\\& d_+=1
\end{array}$

&$\begin{array}{ll} d)& d_l= \{1,0,0,-2\}
\\&  d_r=\{-1,-1,0,1\}
\\ & d_-=0
\\& d_+=0
\end{array}$

\end{tabular}
\caption{Examples of $h$-transverse polygons}
\label{ex poly}
\end{figure}

\begin{rem}
If $\Delta$ is a lattice polygon and if $v$ is a primitive
integer vector  such that for any edge $e$ of $\Delta$ we have
$|det(v,e)|\le l(e)$, then $\Delta$ is a $h$-transverse polygon after a
suitable change of coordinates in $SL_2(\ZZ)$.
\end{rem}

In this paper, we denote by $\Delta_d$ the lattice polygon with
vertices $(0,0)$, $(d,0)$, and $(0,d)$.

\section{Floor diagrams}\label{floor diagrams}
Here we define the combinatorial objects that can be used to replace
the algebraic curves in real and complex enumerative problems.
In this section, we fix an $h$-transverse lattice polygon $\Delta$.

\subsection{Enumeration of complex curves}

\begin{defi}\label{def fd}
A (plane) floor diagram
 $\D$  of genus $g$  and  Newton polygon
  $\Delta$ is the data of a  connected weighted oriented graph
$\Gamma$ and a map $\theta : \text{Vert}(\Gamma)\to \ZZ$ which satisfy
the
  following conditions

\begin{itemize}
\item the oriented graph $\Gamma$ is acyclic,
\item the first Betti number $b_1(\Gamma)$ equals $g$,
\item there are exactly $d_\pm(\Delta)$ edges in
  $\text{Edge}^{\pm\infty}(\Gamma)$, and all of them are of weight $1$,
\item the (unordered) collection of numbers $\theta(v)$, where
  $v$ goes through  vertices of $\Gamma$, coincides with $d_l(\Delta)$,
\item the (unordered) collection of numbers $\theta(v)+\text{div}(v)$, where
  $v$ goes through  vertices of $\Gamma$, coincides with $d_r(\Delta)$.
\end{itemize}
\end{defi}

In order to avoid too many notation, we will denote by the same letter
$\D$ a floor diagram and its underlying graph $\Gamma$.
Here are the convention we use to depict floor diagrams :
vertices of
$\D$
are represented by ellipses. We write $\theta(v)$ inside the ellipse
$v$ only
if $\theta(v)\ne 0$. Edges of $\D$
are represented by vertical lines, and
the orientation is
implicitly from down to up. We write the weight of an edge close to
it only if this weight is at least 2.
In the following,  we define $s=Card(\partial\Delta\cap
\ZZ^2) +g
-1$.

\begin{exa}
Figure \ref{ex FD} depicts an example of floor diagram for any
$h$-transverse polygon depicted in Figure \ref{ex poly}.
\end{exa}

\begin{figure}[h]
\centering
\begin{tabular}{ccccccc}
\includegraphics[height=3cm, angle=0]{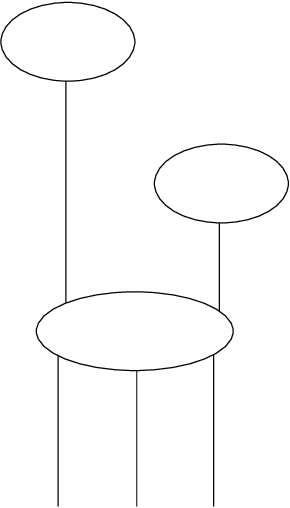}& \hspace{6ex}
&
\includegraphics[height=3cm, angle=0]{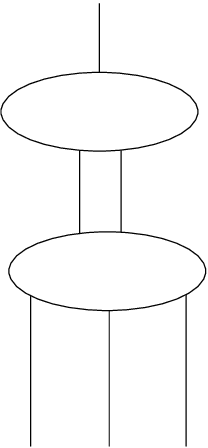}& \hspace{6ex}
&
\includegraphics[height=3cm, angle=0]{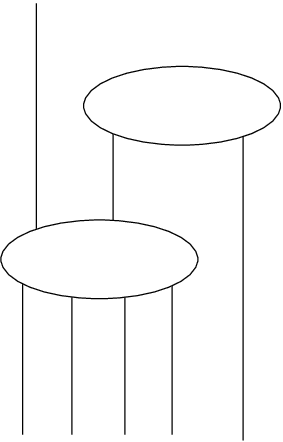}& \hspace{6ex}
&
\includegraphics[height=3cm, angle=0]{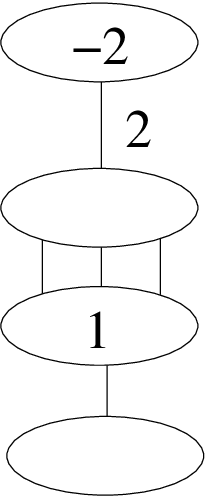}
\\
\\ a) $g=0$ && b) $g=1$ && c) $g=0$ && d) $g=2$
\end{tabular}
\caption{Examples of floor diagrams whose Newton polygon are depicted in
  Figure \ref{ex poly}}
\label{ex FD}
\end{figure}

Note that Equations (\ref{eq1}) and (\ref{eq2}) combined with Euler's
formula imply that for
any floor diagram  $\D$ of genus $g$ and Newton polygon $\Delta$ we have
$$Card(\text{Vert}(\D)) + Card(\text{Edge}(\D))=s.$$

A map $m$ between two partially ordered sets is said
\textit{increasing} if
$$m(i)>m(j)\Longrightarrow i>j$$

\begin{defi}\label{def marking}
A marking of a floor diagram $\D$ of genus $g$ and Newton polygon
$\Delta$ is an increasing  map $m  :
\{1,\dots,s\}\rightarrow
\mathcal{D}$ such that for any edge or vertex $x$ of $\D$, the set
$m^{-1}(x)$ consists of exactly one element.
\end{defi}

A floor diagram enhanced with a marking is called a \textit{marked floor
  diagram} and is said to be
marked by $m$.

\begin{defi}
Two marked floor diagrams $(\D,m)$ and $(\D',m')$ are called
\textit{equivalent} if there exists a homeomorphism of oriented graphs
$\phi : \D\to\D'$ such that $w=w'\circ \phi$, $\theta=\theta'\circ
\phi$, and $m=m'\circ \phi$.
\end{defi}

Hence, if $m(i)$ is an edge $e$ of $\D$, only the knowledge of $e$ is
important to
determine the equivalence class of $(\D,m)$, not
the position of $m(i)$ on $e$.
From now on, we consider marked floor diagrams up to equivalence.
To any (equivalence class of) marked floor diagram, we  assign a
sequence of non-negative
integers called \textit{multiplicities}~: a \textit{complex}
multiplicity, and some \textit{$r$-real} multiplicities.

\begin{defi}
The complex multiplicity of a marked floor diagram $\mathcal D$,
denoted by $\mu^\CC(\mathcal D)$, is defined as
$$\mu^\CC(\mathcal D)=
\prod_{e\in \text{Edge}(\mathcal D)}w(e)^2 $$
\end{defi}
Note that the complex multiplicity of a marked floor diagram
depends only on the underlying floor diagram. Next theorem is the
first of our two main formulas.

\begin{thm}\label{NFD}
For any $h$-transverse polygon $\Delta$ and any genus $g$, one has
$$N(\Delta,g)=\sum \mu^\CC(\mathcal D)$$
where the sum is taken over all marked floor diagrams of
genus $g$ and  Newton polygon $\Delta$.
\end{thm}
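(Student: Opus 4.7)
The plan is to deduce the theorem from Mikhalkin's correspondence theorem, recalled in section \ref{enum trop geo}, which equates $N(\Delta,g)$ with the weighted count of plane tropical curves of genus $g$ and Newton polygon $\Delta$ passing through any generic configuration $\omega$ of $s$ points in $\RR^2$, each counted with its tropical multiplicity $\mu^\CC_{trop}(C)=\prod_v|\det(u_1,u_2)|$ over trivalent vertices. Since this count is independent of the generic $\omega$, I specialize to a \emph{vertically stretched} configuration $\omega=\{p_1,\ldots,p_s\}$ whose $y$-coordinates satisfy $y_1\ll\cdots\ll y_s$ with consecutive gaps enormously larger than the horizontal diameter of $\omega$.

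The $h$-transversality of $\Delta$ is decisive at this point: every non-vertical edge of a tropical curve $C$ with Newton polygon $\Delta$ has slope of absolute value at most $1$. Combined with the stretching of $\omega$, I would prove a \emph{floor decomposition lemma}: every tropical curve $C$ through $\omega$ decomposes canonically into \emph{floors}, maximal connected subgraphs of bounded height, linked by purely vertical \emph{elevator} edges; each floor contains exactly one marked $p_i$ and is of ``height one'' in $\Delta$, so that it is determined by a single integer $\alpha\in d_l(\Delta)$ with $\alpha+\text{div}$ lying in $d_r(\Delta)$, while each elevator also contains exactly one marked $p_i$. The identity $s=Card(\text{Vert}(\D))+Card(\text{Edge}(\D))$ combined with the dimension count for the space of tropical curves forces this distribution of one marked point per vertex or edge.

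From the floor decomposition lemma, the bijection with marked floor diagrams is transparent. To $C$ I associate the oriented weighted graph $\D$ whose vertices are the floors (with $\theta(v)=\alpha$), whose compact edges are the elevators with their tropical weights, and whose non-compact edges are the vertical ends of $C$ oriented upward; these are in number $d_\pm(\Delta)$ and of weight $1$, matching Definition \ref{def fd}. The balancing condition at each floor translates verbatim into the conditions on $d_l(\Delta)$ and $d_r(\Delta)$, and the marking sending $i$ to the floor or elevator containing $p_i$ is increasing by the vertical order of the $p_i$'s. Conversely, a marked floor diagram together with $\omega$ reconstructs a unique tropical curve: starting from the heights of the marked points and propagating by balancing along the acyclic orientation, each elevator and each floor is uniquely recovered.

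Finally, the multiplicity identity $\mu^\CC_{trop}(C)=\mu^\CC(\D)$ is a local calculation. At each trivalent vertex internal to a floor two outgoing slopes are vertical with weights $w(e),w(e')$ and the third has slope $(\alpha,\pm 1)$, yielding local determinant $w(e)w(e')$; multiplying over all such vertices each compact elevator $e$ is counted once at each of its endpoints and contributes $w(e)^2$, while non-compact ends contribute $1$, recovering $\prod_{e\in\text{Edge}(\D)}w(e)^2=\mu^\CC(\D)$. The main obstacle is the floor decomposition lemma itself, namely making precise how vertical stretching together with $h$-transversality forces the claimed rigid structure on $C$ and rules out configurations in which two marked points lie on a single floor or elevator. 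Once that structural result is in hand, the bijection with marked floor diagrams and the multiplicity check amount to bookkeeping.
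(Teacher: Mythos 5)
Your proposal follows essentially the same route as the paper: apply Mikhalkin's correspondence theorem to a vertically stretched configuration, prove that the curve decomposes into floors and elevators each carrying exactly one marked point, read off a marked floor diagram, and match multiplicities locally. This is precisely the content of Proposition \ref{strip}, Corollaries \ref{fd thm} and \ref{all marked}, and Proposition \ref{fd bij}, and you correctly identify the floor decomposition lemma as the real work (the paper proves it by a deformation argument ruling out vertices outside the strip, followed by an Euler-characteristic count forcing $f_0=f_2=b_0=\widetilde d_0=0$).

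Two local statements should be repaired before the sketch becomes a proof. First, $h$-transversality does \emph{not} imply that every non-vertical edge has slope of absolute value at most $1$; it only controls the directions $(\pm 1,\alpha)$ of the non-vertical leaves, and the bound on interior edge directions comes from the tropical B\'ezout theorem together with Proposition \ref{strip}, which is all one needs for floors to have bounded diameter. Second, your local picture at a trivalent vertex (two vertical edges and one floor edge) is impossible: balancing would force the horizontal component of the third edge to vanish, and the determinant of two vertical vectors is $0$. The correct picture, as in the paper, is one elevator of weight $w$ and two floor edges of weight $1$ and horizontal component $\pm 1$ at each vertex, giving $\mu^\CC(v,f)=w$; since each bounded elevator has two endpoints and each unbounded one has weight $1$, the product over vertices is still $\prod_e w(e)^2$, so your conclusion is right even though the intermediate picture is not.
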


Theorem \ref{NFD} is a corollary of Proposition \ref{fd bij} proved in
section \ref{Floor trop}.

\begin{exa}
Using marked floor diagrams depicted in Figures \ref{comp cplx1} and
\ref{comp cplx2} we verify that

\vspace{2ex}

$N(\Delta_3,1)= 1$ (see Figure \ref{comp cplx1}a), $N(\Delta_3,0 )=12$
(see Figure \ref{comp cplx1}b,c,d).
\vspace{2ex}

$N(\Delta,0)= 84$ (see Figure \ref{comp cplx2}),  where $\Delta$
  is the polygon depicted in Figure \ref{ex poly}c.
   \end{exa}

\begin{figure}[h]
\centering
\begin{tabular}{cccc}
\includegraphics[height=3cm, angle=0]{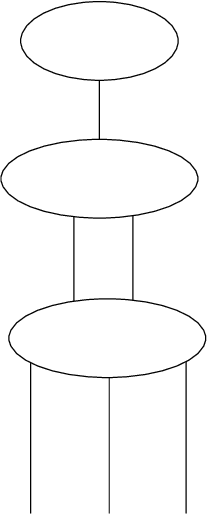}&
\includegraphics[height=3cm, angle=0]{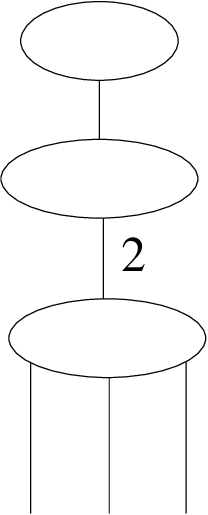}&
\includegraphics[height=3cm, angle=0]{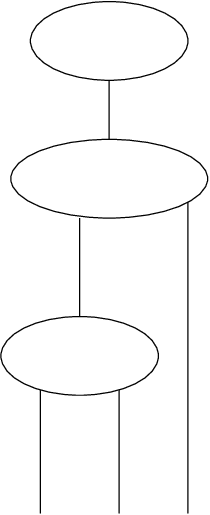}&
\includegraphics[height=3cm, angle=0]{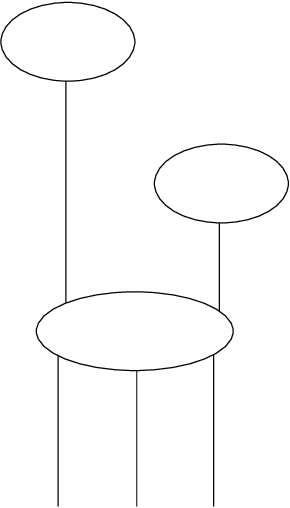}
\\
\\ a) $\mu^\CC=1$, 1 marking & b)  $\mu^\CC=4$, 1 marking & c)
$\mu^\CC=1$, 5 markings  & d)  $\mu^\CC=1$, 3 markings
\end{tabular}
\caption{Floor diagrams of genus 1 and 0, and  Newton polygon $\Delta_3$}
\label{comp cplx1}
\end{figure}

\begin{figure}[h]
\centering
\begin{tabular}{cccc}
\includegraphics[height=3cm, angle=0]{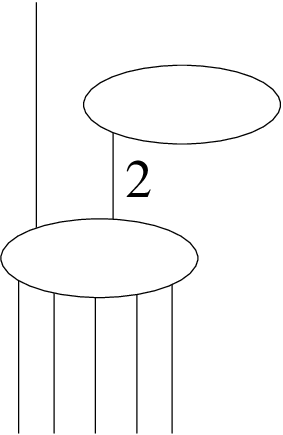}&
\includegraphics[height=3cm, angle=0]{Figures/ExFDc.eps}&
\includegraphics[height=3cm, angle=0]{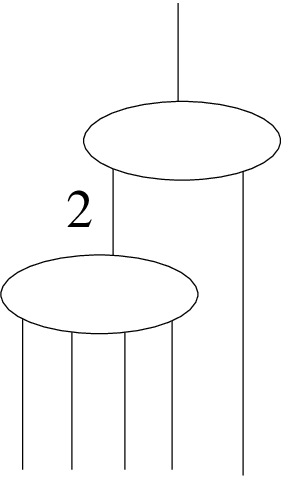}&
\includegraphics[height=3cm, angle=0]{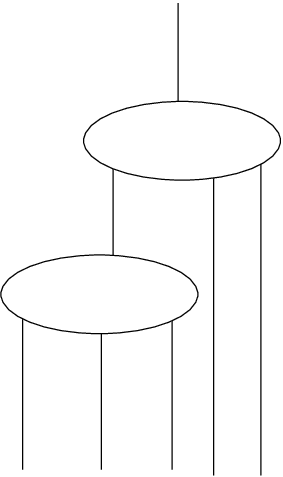}
\\
\\ a) $\mu^\CC=4$, 3 markings & b)  $\mu^\CC=1$, 23 markings & c)
$\mu^\CC=4$, 7 markings  & d)  $\mu^\CC=1$, 21 markings
\end{tabular}
\caption{Floor diagrams of genus 0 and Newton polygon  depicted in
  Figure \ref{ex poly}c}
\label{comp cplx2}
\end{figure}

\subsection{Enumeration of real curves}
First of all, we have to define the notion of real marked floor diagrams.
Like before, we define  $s=Card(\partial\Delta\cap
\ZZ^2)+g
-1$. Choose an integer $r\ge 0$ such that
$s-2r\ge 0$, and $\mathcal D$  a floor
diagram
of genus $0$ and  Newton polygon $\Delta$ marked by a map $m$.

The set $\{i,i+1\}$ is a called \textit{$r$-pair} if
 $i=s-2k+1$ with  $1\leq k \leq r$.
 Denote by
$\Im (m,r)$ the union of all the $r$-pairs $\{i,
i+1\}$ where $m(i)$ is not adjacent to  $m(i+1)$.
Let $\rho_{m,r}:\{1,\ldots, s\} \rightarrow \{1,\ldots, s\} $ be
the bijection
defined by  $\rho_{m,r} (i)=i$ if $i\notin
 \Im(m,r)$, and by $\rho_{m,r}(i)=j$ if
$\{i,j\}$ is a $r$-pair  contained in $ \Im(m,r)$.
Note that  $\rho_{m,r}$ is an involution.

We define $o_r$ to be the half of the number of vertices $v$ of $\D$ in
$m(\Im(m,r))$ with odd divergence $\text{div}(v)$,
 and we set $A= \text{Edge}(\mathcal D) \setminus  m(\{1,\ldots,
s-2r\})$.

\begin{defi}\label{defi real}
A marked floor diagram $(\mathcal D,m)$
 is called $r$-real if the two marked floor diagrams
$(\mathcal{D},m)$ and
$(\mathcal{D},m\circ \rho_{m,r})$ are
equivalent.

The $r$-real multiplicity of a  $r$-real marked floor
diagram, denoted by
$\mu^\RR_r(\mathcal D,m)$, is defined as
$$\mu^\RR_r(\mathcal D,m)= (-1)^{o_r}
\prod_{e\in A}w(e) $$
if  all
edges of $\D$ of even weight contains a point of $m(\Im(m,r))$, and as
$$\mu^\RR_r(\mathcal D,m)=0 $$
otherwise.

For convenience we set $\mu^\RR_r(\mathcal D,m)=0 $ also in the case when
$(\mathcal D,m)$ is not $r$-real.

\end{defi}

Note that $\mu^\RR_0(\mathcal D,m)= 1$ or $0$ and is equal to $\mu^\CC(\mathcal D)$  modulo 2,
hence doesn't depend on $m$. However,
  $\mu^\RR_r(\mathcal D,m)$ depends on $m$ as soon as $r\ge 1$.
Next theorem is the second main formula of this paper.

\begin{thm}\label{WFD}
Let  $\Delta$ be a $h$-transverse polygon such that Welschinger
invariants are defined for the corresponding
toric surface $Tor(\Delta)$ equipped with its tautological real structure. Then for
any integer $r$ such that $s-2r\ge 0$, one has
$$W(\Delta,r)=\sum \mu^\RR_r(\mathcal D,m)$$
where the sum is taken over all 
marked floor diagrams of
genus 0 and  Newton polygon $\Delta$.
\end{thm}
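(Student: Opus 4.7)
The plan is to prove Theorem \ref{WFD} in direct parallel with Theorem \ref{NFD}, replacing Mikhalkin's complex correspondence by its real counterpart. First I would invoke the real tropical correspondence theorem of Mikhalkin and Shustin (\cite{Mik1}, \cite{Sh8}): for a generic real configuration $\omega\subset(\RR^*)^2$ with $r$ pairs of complex conjugate points, the Welschinger invariant $W(\Delta,r)$ equals the sum of real Welschinger multiplicities over irreducible rational tropical curves passing through the tropicalized configuration. Recall that this real multiplicity vanishes unless every edge of even weight lies above a tropicalized pair of conjugate points, and otherwise equals $(-1)^I\prod w(e)$, where $I$ counts local contributions of isolated nodes produced by patchworking and the product is taken over the edges marked by conjugate pairs.

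Next I would choose a particular vertically stretched real configuration of the type used for Proposition \ref{fd bij}. The first $s-2r$ points are taken real, arranged with widely separated heights so that the floor decomposition is forced, while the remaining $2r$ points form $r$ conjugate pairs placed in the top positions of the stretching; under tropicalization each pair becomes a single puncture in $(\RR^*)^2$. Because Proposition \ref{fd bij} is purely combinatorial and ignores any real structure, tropical curves through this configuration are in bijection with marked floor diagrams $(\mathcal D,m)$ of genus $0$ and Newton polygon $\Delta$, and under this bijection the conjugate pairs correspond precisely to the marked points whose indices form the $r$-pairs $\{s-2k+1,s-2k+2\}$.

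Under this bijection I would check, term by term, that the real Welschinger weight of a tropical curve coincides with $\mu^\RR_r(\mathcal D,m)$ of Definition \ref{defi real}: (i) a real tropical curve is Galois invariant, and hence contributes to $W(\Delta,r)$, if and only if exchanging the two points of every conjugate pair yields an equivalent curve, which on the floor diagram side is the condition $(\mathcal D,m)\sim(\mathcal D,m\circ\rho_{m,r})$, i.e.\ $r$-realness; (ii) Shustin's even-weight vanishing condition translates to the requirement that every edge of $\D$ of even weight be marked by a point of $m(\Im(m,r))$; (iii) the factor $\prod_{e\in A}w(e)$ gathers the Welschinger contributions of edges marked by conjugate pairs, since edges marked by real points carry weight $1$ in Shustin's formula; (iv) the sign $(-1)^{o_r}$ records the isolated nodes produced when one smooths those vertices of $\D$ at which an elevator marked by a conjugate pair is incident.

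The main obstacle will be the precise accounting of signs in~(iv). A local computation at each vertex $v\in m(\Im(m,r))$ of the smoothed tropical curve is required: one must show that the patchworked real curve acquires an isolated node at $v$ precisely when $\mathrm{div}(v)$ is odd, so that summing over such vertices yields the parity $o_r$ specified in Definition~\ref{defi real}. This in turn demands a careful analysis of how the parity of the divergence controls the real topology of the local deformation of the floor containing $v$ when one of its adjacent elevators is replaced by a conjugate pair of real punctures. All other ingredients---the combinatorial bijection, the vanishing for uncompensated even edges, and the product over edges in $A$---follow routinely from Proposition \ref{fd bij} combined with Shustin's formula.
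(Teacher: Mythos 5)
Your overall strategy---invoke the real tropical correspondence theorem and then stretch the configuration vertically---is the same as the paper's, but there is a genuine gap at the step where you claim the bijection with marked floor diagrams comes for free. You assert that ``Proposition \ref{fd bij} is purely combinatorial and ignores any real structure,'' so that the real count reduces to the complex bijection. This is not correct: after tropicalization the $r$ conjugate pairs collapse to $r$ single points, so the relevant objects are \emph{real} parameterized tropical curves $(C,x_1,\ldots,x_s,f,\phi)$ passing through only $s-r$ points of $\RR^2$ and carrying an isometric involution $\phi$. These are structurally different from the curves of the complex case: they may have $4$-valent vertices, pairs of edges of $\Im(\widetilde C)$ exchanged by $\phi$ and mapped to the same image, and a marked point sitting at a vertex when the two points of a conjugate pair coincide. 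Proving that such curves are rigid for generic $\omega_r$, that all vertices are $1$-, $3$- or $4$-valent, and that the floor structure (one marked point per floor, all leaves of weight $1$) persists requires a separate dimension count --- the paper's Proposition \ref{finite 2}, which builds auxiliary curves $C^r$ and $C^i$ out of $\Re(\widetilde C)$ and $\Im(\widetilde C)/\phi$ and compares the dimensions of their deformation spaces. None of this is inherited from the complex case. Moreover the resulting map lands only on the $r$-real marked diagrams with nonzero $r$-real multiplicity, not on all marked diagrams, and identifying its image is part of what must be proved.

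The second issue is your point (iv). The paper never returns to patchworking to compute the sign: Theorem \ref{corr 2} already packages the Welschinger sign into a purely tropical quantity $(-1)^{o_r^\RR+o_r^\CC}$, where $o_r^\CC$ counts vertices of $\Im(\widetilde C)/\phi$ with odd complex multiplicity and $o_r^\RR$ is a correction attached to vertices of $\Re(\widetilde C)$. For floor-decomposed curves the remaining work is tropical bookkeeping: every vertex is adjacent to exactly one elevator of some weight $w$ and to a floor edge of weight $1$, so its complex multiplicity equals $w$; one then checks that $o_r^\RR$ is always even (a $3$-valent real vertex with $\mu^\CC\equiv 3 \bmod 4$ sits on an elevator of odd weight at least $3$, hence a bounded elevator whose other endpoint has the same property, so such vertices come in pairs) and that $o_r^\CC$ agrees modulo $2$ with the divergence count $o_r$ of Definition \ref{defi real}. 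Your plan instead calls for a local analysis of isolated nodes of the patchworked curve as a function of the parity of $\mathrm{div}(v)$, which would amount to re-proving part of Shustin's correspondence theorem; you correctly flag this as the main obstacle, but it is exactly the step your write-up leaves open, and it is avoidable if you take the real multiplicity in its tropical form from the outset.
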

Theorem \ref{WFD} is a corollary of Proposition \ref{fd bij} proved in
section \ref{Floor trop}.

\begin{exa}
All marked floor diagrams of  genus 0 and  Newton polygon $\Delta_3$
  are depicted in
Table \ref{cubic} together with their real multiplicities. The first
floor diagram
has an edge of weight 2, but we didn't mention it in the picture to
avoid confusion.
According to Theorem \ref{WFD} we find $W(\Delta_3,r)=8-2r$.
\end{exa}

\begin{table}[h]
\centering
\begin{tabular}{c|c|c|c|c|c|c|c|c|c}
&
\includegraphics[height=2.5cm, angle=0]{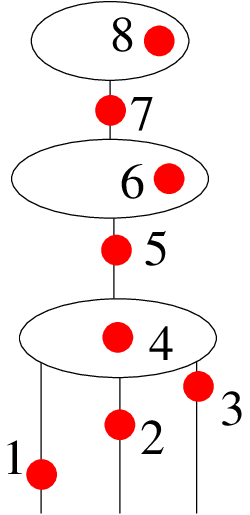}&
\includegraphics[height=2.5cm, angle=0]{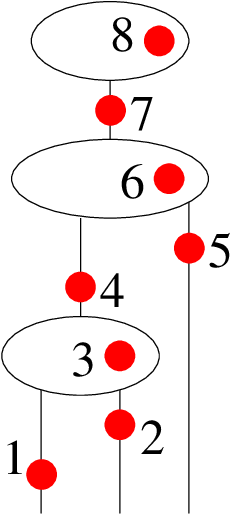}&
\includegraphics[height=2.5cm, angle=0]{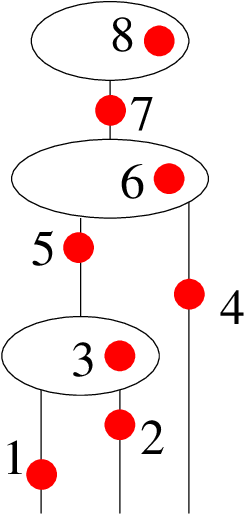}&
\includegraphics[height=2.5cm, angle=0]{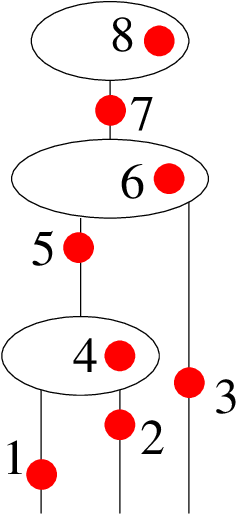}&
\includegraphics[height=2.5cm, angle=0]{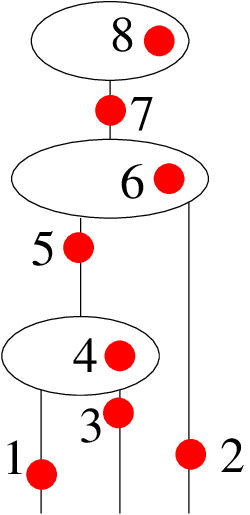}&
\includegraphics[height=2.5cm, angle=0]{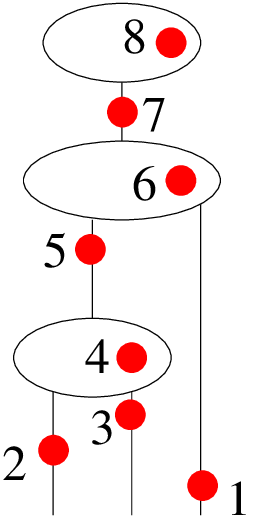}&
\includegraphics[height=2.5cm, angle=0]{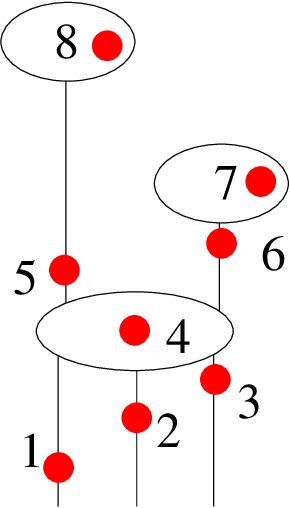}&
\includegraphics[height=2.5cm, angle=0]{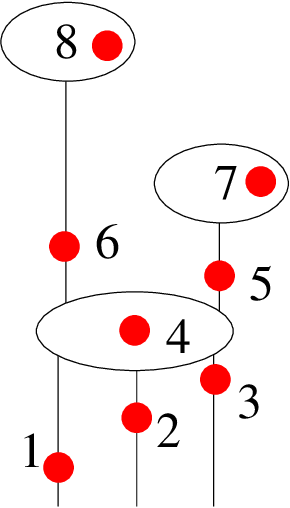}&
\includegraphics[height=2.5cm, angle=0]{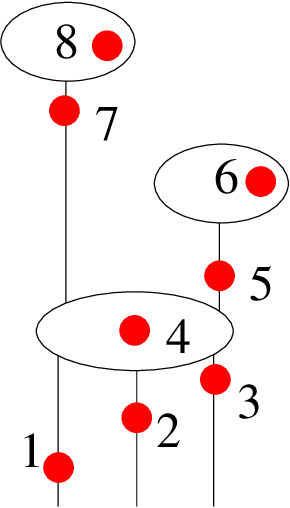}
\\ \hline $ \mu^\CC$ & 4 & 1 &1 &1 &1 &1 &1 &1 &1
\\ \hline $ \mu^\RR_0$ & 0 & 1 &1 &1 &1 &1 &1 &1 &1
\\ \hline $ \mu^\RR_1$ & 0 & 1 &1 &1 &1 &1 &0 &0 &1
\\ \hline $ \mu^\RR_2$ & 0 & 1 &1 &1 &1 &1 &-1 &-1 &1
\\ \hline $ \mu^\RR_3$ & 0 & 1 &0 &0 &1 &1 &-1 &-1 &1
\\ \hline $ \mu^\RR_4$ & 0 & 1 &0 &0 &0 &0 &-1 &-1 &1

\end{tabular}
\\
\begin{tabular}{c}
\end{tabular}
\caption{Computation of $W(\Delta_3,r)$}
\label{cubic}
\end{table}

\section{Enumerative tropical geometry}\label{enum trop geo}

\subsection{Tropical curves}\label{defi trop curve}
\begin{defi}
An irreducible  tropical curve $C$ is a connected compact metric graph
whose leaves are exactly the edges of infinite length.
This means that $C\setminus End(C)$ is a complete metric space with
inner metric. In other words the 1-valent vertices are at the infinite
distance from all the other points of $C$.
The genus of $C$ is defined as its first Betti number $b_1(C)$.
\end{defi}

\begin{exa}
Examples of tropical curves are depicted in Figure \ref{ex
  trop}. 1-valent vertices are represented with bullets.
\end{exa}

\begin{figure}[h]
\centering
\begin{tabular}{ccccc}
\includegraphics[height=3cm, angle=0]{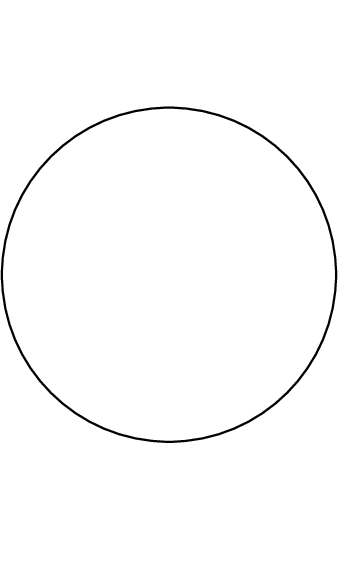}& \hspace{3ex} &
\includegraphics[height=3cm, angle=0]{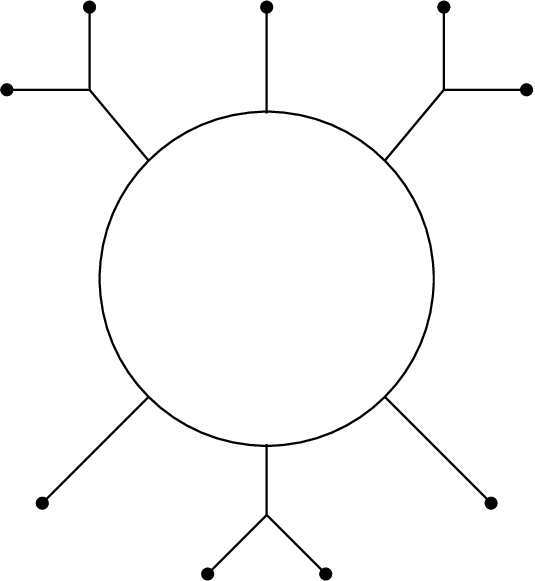}&\hspace{3ex} &
\includegraphics[height=3cm, angle=0]{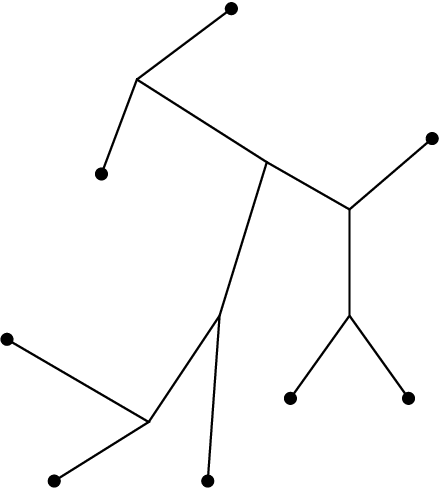}
\\
\\a) $g=1$  &&  b) $g=1$ && c)  $g=0$
\end{tabular}
\caption{Examples of tropical curves}
\label{ex trop}
\end{figure}

Given $e$  an edge of a tropical curve $C$, we choose a point $p$ in
the interior of $e$ and
a unit vector $u_e$
of the tangent line to $C$ at $p$. Of course,  the vector $u_e$
depends on the choice of $p$ and is not well defined, but this will not
matter in the following. We will sometimes need $u_e$ to have a
prescribed direction, and we will then precise this direction.
The
standard inclusion of $\ZZ^2$ in $\RR^2$ induces a standard
inclusion of $\ZZ^2$ in the tangent space of $\RR^2$  at any point of $\RR^2$.

\begin{defi}
A map $f : C\setminus\text{End}(C) \to \RR^2$ is called a tropical
morphism if the following conditions are satisfied
\begin{itemize}
\item for any edge $e$ of $C$, the restriction $f_{|e}$ is a
  smooth map with $df(u_e)=w_{f,e}u_{f,e}$ where
  $w_{f,e}$ is a non-negative integer and $u_{f,e}\in\ZZ^2$ is a
  primitive vector,

\item for any vertex $v$ of $C$ whose adjacent  edges are
  $e_1,\ldots,e_k$,
  one has the balancing condition
$$\sum_{i=1}^k  w_{f,e_i}u_{f,e_i}=0 $$
where $u_{e_i}$ is chosen so that it points away from $v$.
\end{itemize}

\end{defi}

Let $f : C\setminus\text{End}(C) \to \RR^2$ be  a tropical morphism, and
define $L(C,f)$ as  the unordered list composed by elements $u_{f,e}$
repeated $w_{f,e}$ times where
 $e$ goes through leaves of $C$ and $u_{e}$ is chosen so that it
points to the 1-valent vertex.  Then, there exists a
unique, up to translation by a vector in $\ZZ^2$, lattice polygon
$\Delta(C,f)$ such that the unordered list composed by the primitive
vector normal to $e$ and  outward to $\Delta(C,f)$ repeated $l(e)$
times where $e$ goes through
edges of $\Delta(C,f)$ equals the list $L(C,f)$.

\begin{defi}
The polygon $\Delta(C,f)$ is called the Newton polygon of the
pair $(C,f)$.
\end{defi}

Not any tropical curve admits a non-constant tropical morphism to $\RR^2$. The
tropical curve
depicted in Figure \ref{ex trop}a does not admit any tropical morphism
since a circle cannot be mapped to a segment in $\RR^2$ by a
dilatation. However, up to \textit{modification}, every tropical curve
can be tropically immersed to $\RR^2$ (see \cite{Mik08}).

The pair $(C,f)$ where $f : C\setminus\text{End}(C) \to
\RR^2$ is a tropical morphism with Newton polygon $\Delta$ is  called
a \textit{parameterized
  tropical curve with Newton polygon $\Delta$}. The integer $w_{f,e}$
 is
called the \textit{weight} of the edge $e$. The genus of $(C,f)$ is
naturally defined as the genus of $C$.

\begin{exa}
If $C$ is the tropical curve depicted in
Figure \ref{ex trop}b (resp.  c) then
an example of the image $f(C)$ for some parameterization
 with Newton polygon $\Delta_3$ is depicted in Figure \ref{ex trop
  morphism}a (resp. b). The second tropical morphism has an edge of
 weight 2.
\end{exa}

\begin{figure}[h]
\centering
\begin{tabular}{ccc}
\includegraphics[height=4cm, angle=0]{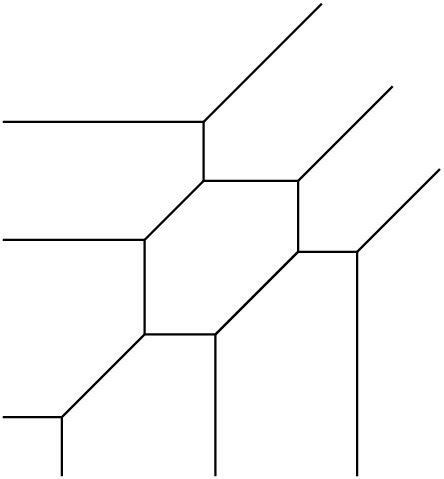}& \hspace{3ex} &
\includegraphics[height=4cm, angle=0]{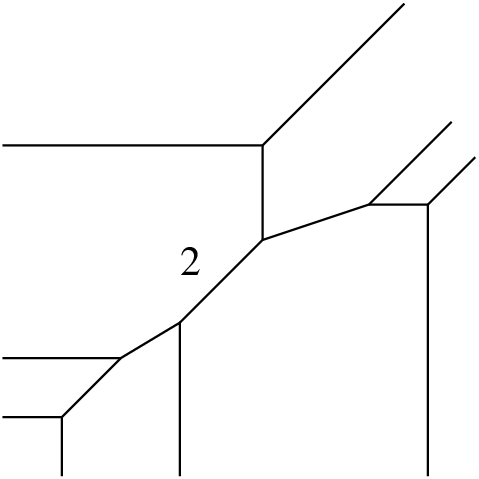}
\\
\\a)  &&  b)
\end{tabular}
\caption{Images of tropical morphisms with Newton polygon $\Delta_3$}
\label{ex trop morphism}
\end{figure}

\begin{defi}
A tropical curve  with $n$ marked points is a $(n+1)$-tuple
$(C,x_1,\ldots,x_n)$ where $C$ is a tropical curve and the $x_i$'s are
$n$  points on $C$.

A parameterized tropical curve with $n$ marked points is a $(n+2)$-tuple
$(C,x_1,\ldots,x_n,f)$ where $(C,x_1,\ldots,x_n)$ is a tropical curve
with $n$ marked points, and $(C,f)$ is a parameterized tropical curve.
\end{defi}
Note that in this paper we do not require the marked points on a marked tropical curve
to be distinct.
In the following, we consider tropical curves (with $n$ marked points)
up to homeomorphism of metric graphs (which send the $i^{th}$ point to the
$i^{th}$ point).
The notions of vertices, edges, Newton polygon, $\ldots$ also make sense
for a parameterized marked tropical curve as the corresponding notions
for the underlying (parameterized) tropical curve.

\subsection{Complex multiplicity of a tropical curve}
Let us now turn to tropical enumerative geometry, and let's relate it
first to complex enumerative geometry. More details about this section can
be found in \cite{Mik1} or \cite{GM1}.

Fix a lattice
polygon $\Delta$, a non-negative integer number $g$, and define
$s=Card(\partial \Delta\cap \ZZ^2) -1 +g$. Choose
a collection $\omega=\{p_1,\ldots, p_s\}$
of $s$
points in $\RR^2$, and denote by
$\C(\omega)$ the set of parameterized tropical curves with $s$-marked points
$(C,x_1,\ldots, x_s,f)$ satisfying the following conditions
\begin{itemize}
\item the tropical curve $C$ is irreducible and of genus $g$,
\item  $\Delta(C,f)=\Delta$,
\item for any $1\le i\le s$, $f(x_i)=p_i$.
\end{itemize}

\begin{figure}[h]
\begin{center}
\begin{tabular}{ccccccc}
\includegraphics[height=6cm,
  angle=0]{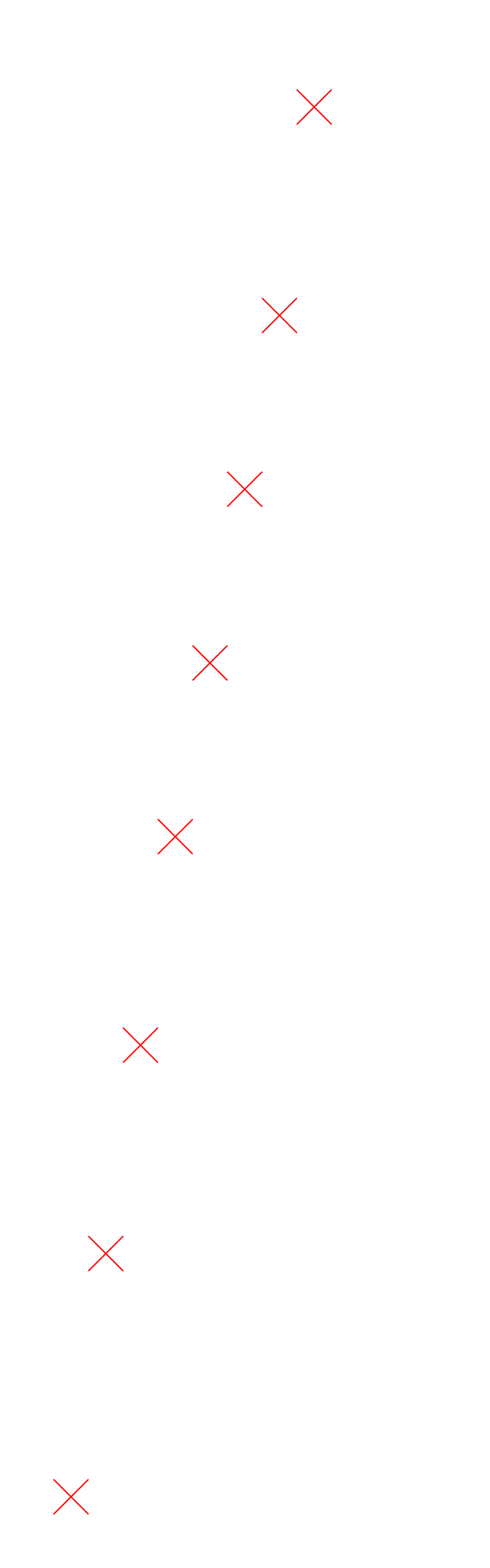}& \hspace{3ex} &
\includegraphics[height=6cm, angle=0]{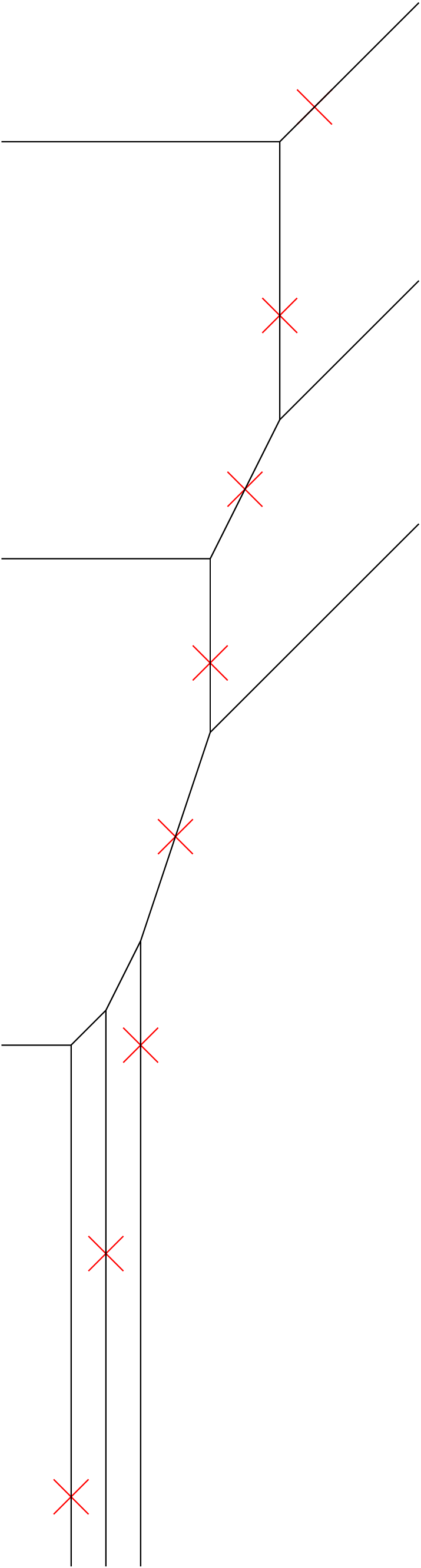}&
\includegraphics[height=6cm, angle=0]{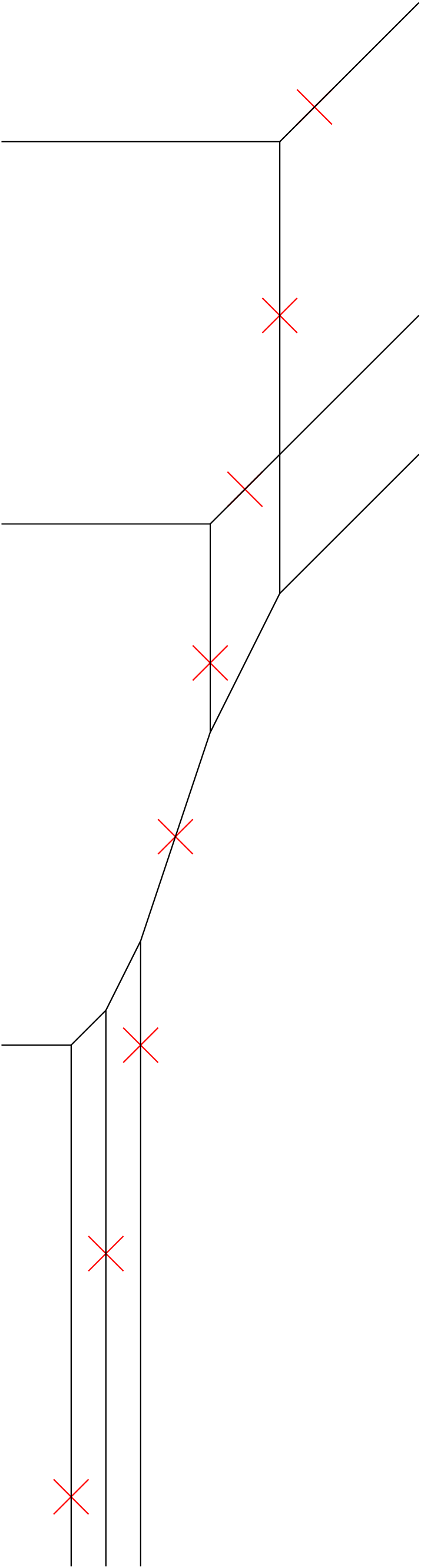}&
\includegraphics[height=6cm, angle=0]{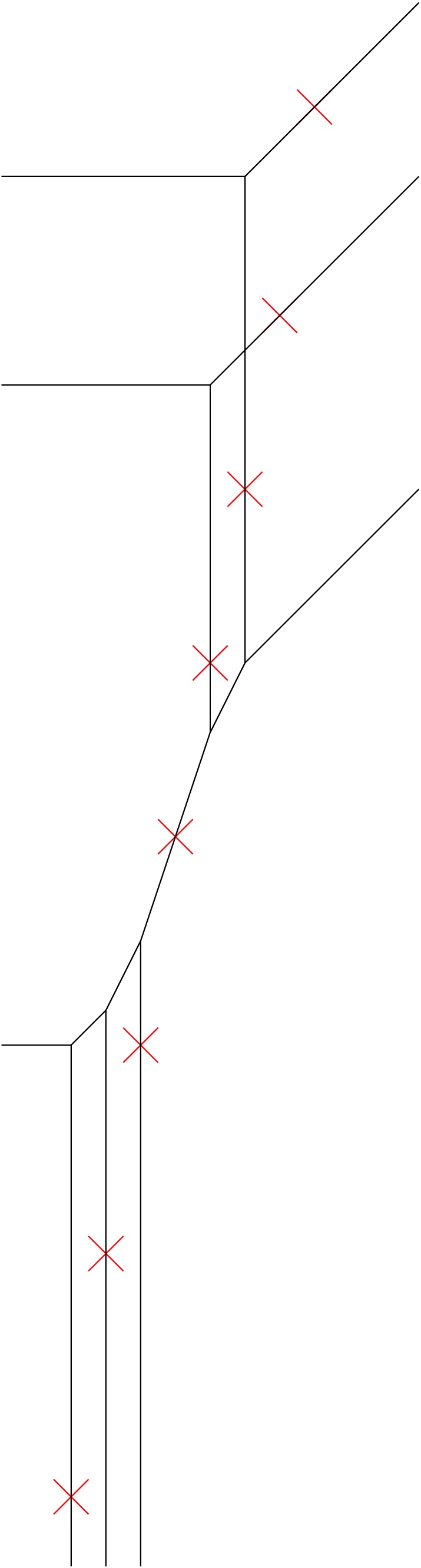}&
\includegraphics[height=6cm, angle=0]{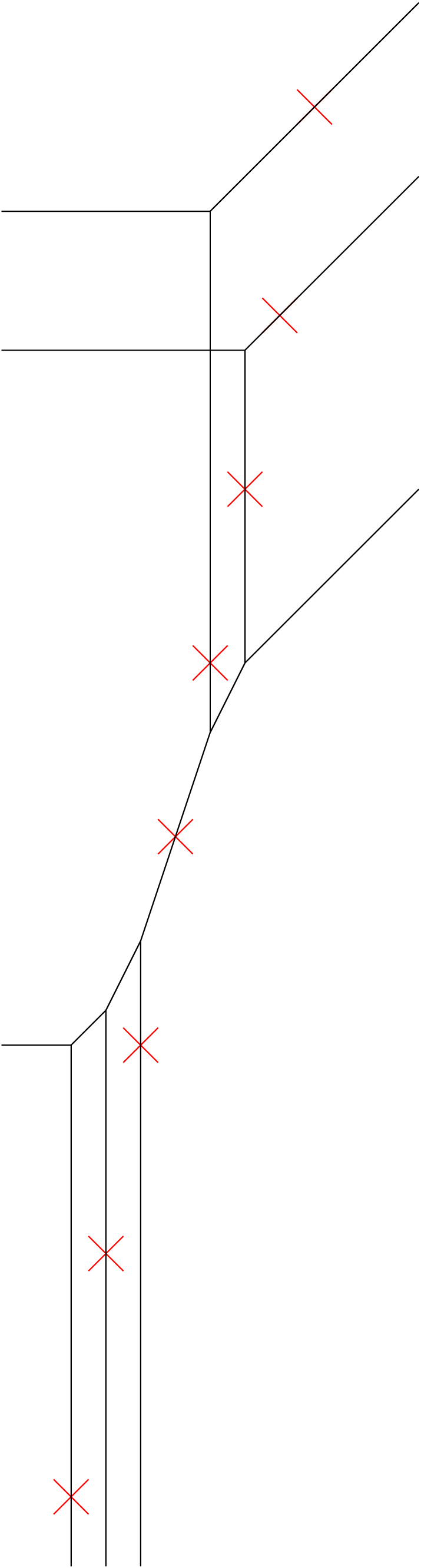}&
\includegraphics[height=6cm, angle=0]{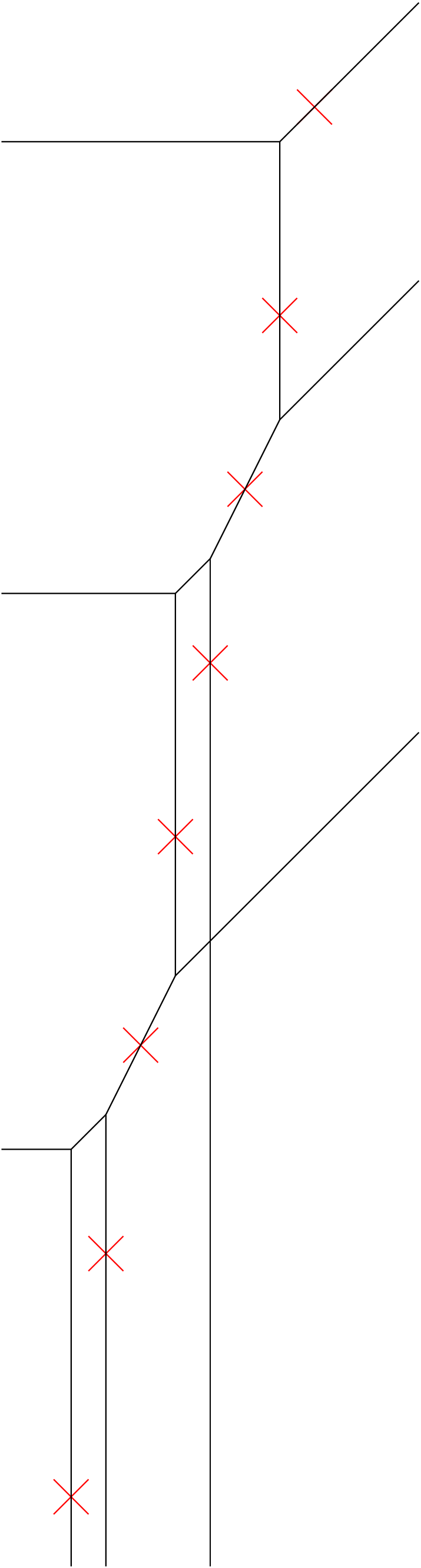}
\\
\\ a)&& b) $\mu^\CC=4$ &c)  $\mu^\CC=1$ &d) $\mu^\CC=1$ &
e) $\mu^\CC=1$ &f) $\mu^\CC=1$
\\
\\

\end{tabular}

\begin{tabular}{cccc}
\includegraphics[height=6cm, angle=0]{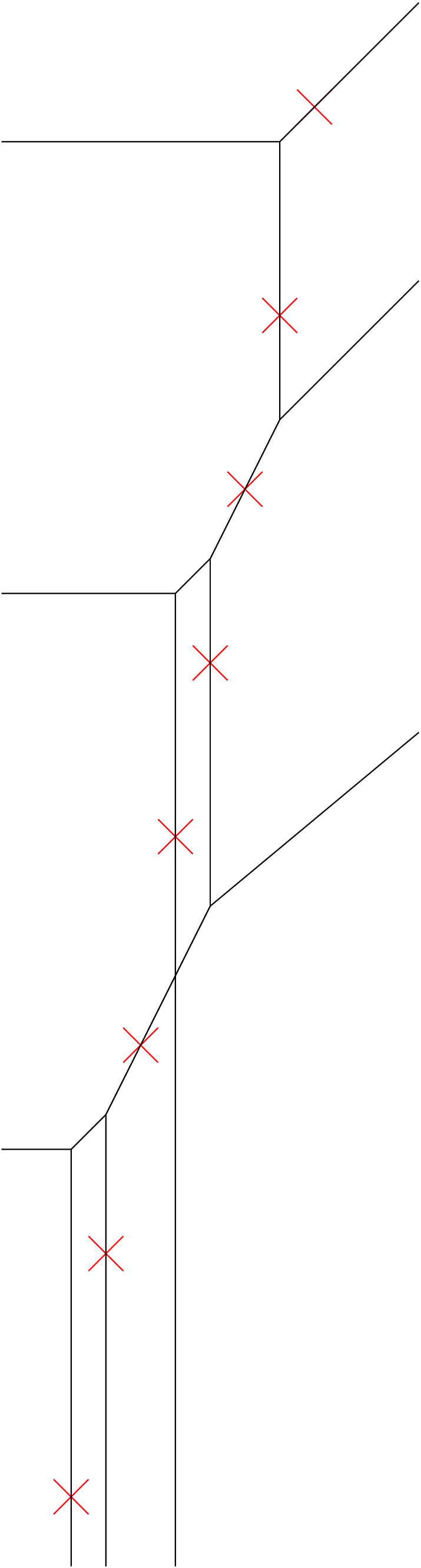}&
\includegraphics[height=6cm, angle=0]{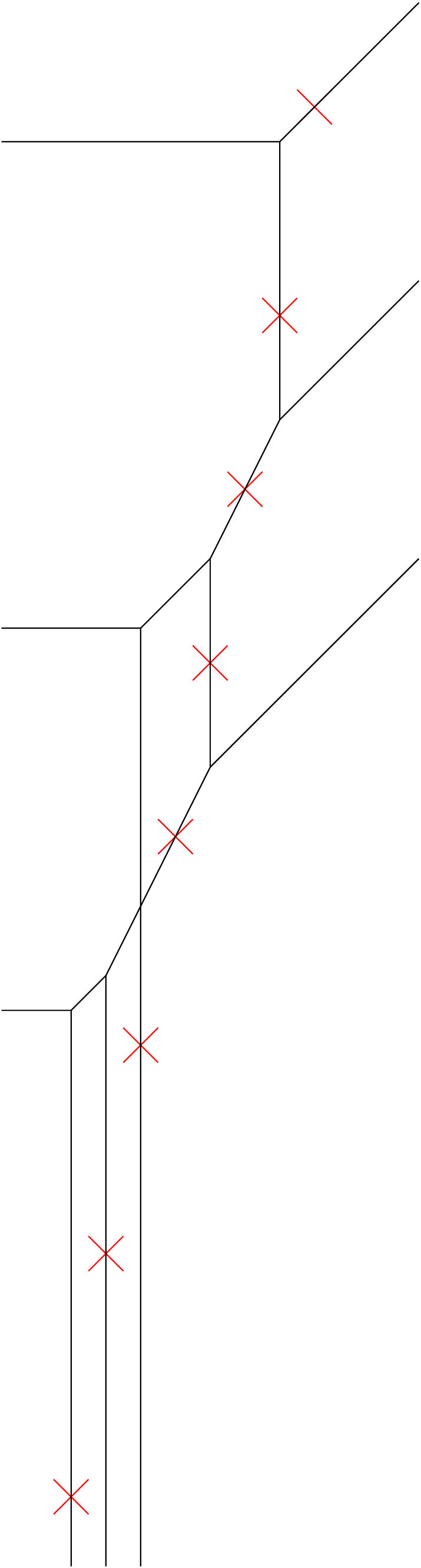}&
\includegraphics[height=6cm, angle=0]{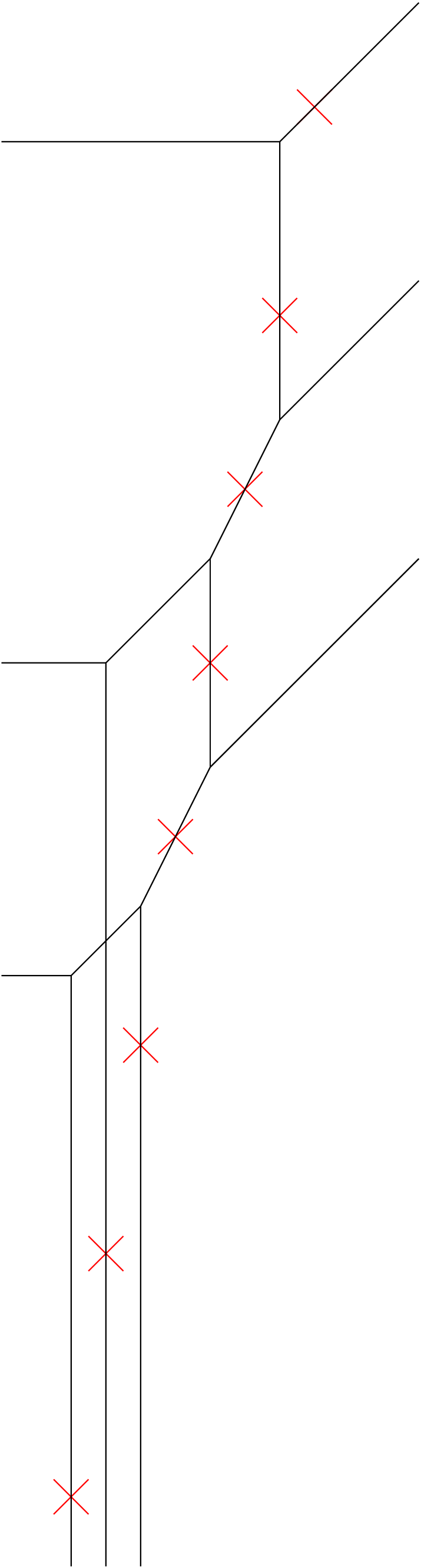}&
\includegraphics[height=6cm, angle=0]{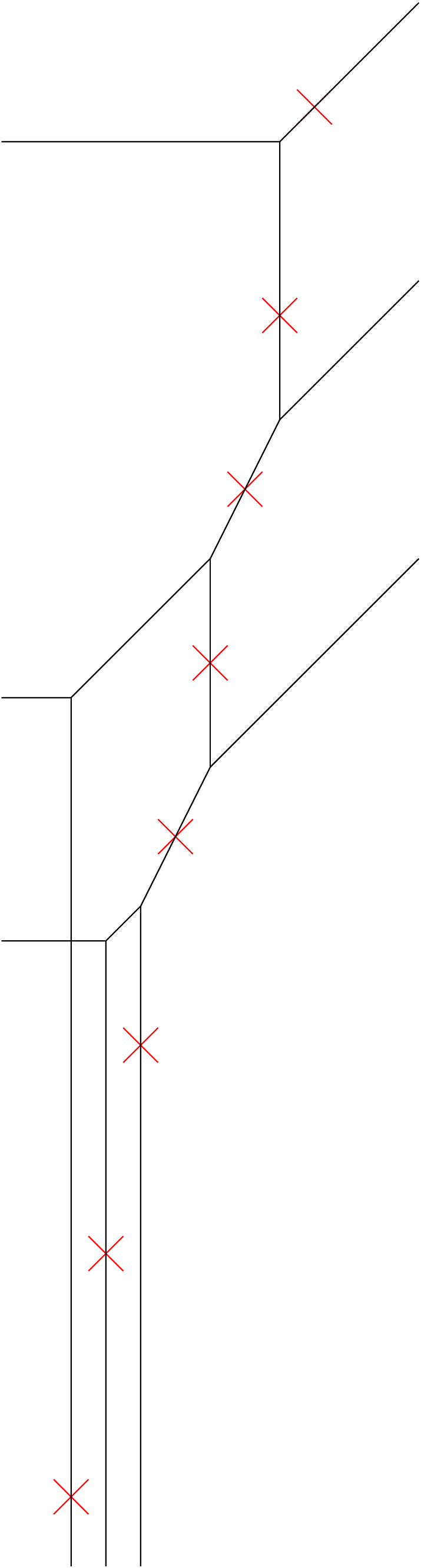}
\\
\\ g)  $\mu^\CC=1$ &h) $\mu^\CC=1$ &
i) $\mu^\CC=1$ &j) $\mu^\CC=1$
\\

\end{tabular}

\end{center}
\caption{$N(\Delta_3,0)=12$}
\label{Tcub0}
\end{figure}

\begin{prop}[Mikhalkin, \cite{Mik1}]\label{finite 1}
For a generic configuration of points $\omega$, the set $\C(\omega)$ is
finite. Moreover, for any parameterized tropical curve $(C,x_1,\ldots,
x_s,f)$ in $\C(\omega)$, the curve $C$
has only 1 or 3-valent  vertices, the set $\{x_1,\ldots,
x_s\}$ is disjoint from $\text{Vert}(C)$, any
leaf of
$C$  is of weight 1, and $f$ is a topological immersion.
In particular, any neighborhood of any
3-valent vertex of $C$ is never mapped to a segment by $f$.
\end{prop}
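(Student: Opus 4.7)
The plan is to stratify the moduli space $\mathcal M$ of all parameterized marked tropical curves of genus $g$ and Newton polygon $\Delta$ by combinatorial type, view the evaluation
\begin{equation*}
\text{ev}:\mathcal M\to(\RR^2)^s,\qquad (C,x_1,\dots,x_s,f)\mapsto(f(x_1),\dots,f(x_s)),
\end{equation*}
as a piecewise linear map, and show by a dimension count that for a generic $\omega$ only the strata $\mathcal M_\alpha$ satisfying the five listed properties can contribute to $\text{ev}^{-1}(\omega)$, and that the restriction of $\text{ev}$ to each such stratum is a local homeomorphism.

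A combinatorial type $\alpha$ records an abstract weighted oriented graph $\Gamma$, primitive direction vectors $u_{f,e}$ on each edge (subject to balancing at every vertex), and the edge or vertex of $\Gamma$ on which each marked point sits. Write $V_k$ for the number of $k$-valent vertices, $E^b$ for the bounded edges, $s_V$ for marked points at vertices, and $s_E=s-s_V$. Euler's formula for $\Gamma$ together with the valence sum yield
\begin{equation*}
E^b=V_1-3+3g-\sum_{k\ge 4}(k-3)V_k,
\end{equation*}
and the fact that leaves carry the lattice points of $\partial\Delta$ with their weights gives $V_1\le Card(\partial\Delta\cap\ZZ^2)=s+1-g$, with equality iff every leaf has weight $1$. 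Parameterizing $\mathcal M_\alpha$ by the position of a reference vertex in $\RR^2$, the $E^b$ edge lengths, and the $s_E$ positions of marked points along edges, modulo the closing conditions around a basis of $g$ cycles, substitution into the naive dimension bound $2+E^b-2g+s_E$ yields
\begin{equation*}
\dim\,\text{Im}(\text{ev}_\alpha)\ \le\ 2s-s_V-\sum_{k\ge 4}(k-3)V_k\ \le\ 2s,
\end{equation*}
with both equalities exactly when the type is ``simple'': $\Gamma$ has only $1$- and $3$-valent vertices, every leaf has weight $1$, and no marked point lies at a vertex. (If a cycle of $\Gamma$ is composed of edges whose directions span a single line then some closing conditions become dependent, but the extra edge-length deformations only translate parts of the image along a fixed line, so the bound on $\dim\,\text{Im}(\text{ev}_\alpha)$ remains valid.)

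On a simple stratum, $\text{ev}_\alpha$ is a linear map between spaces of common dimension $2s$, and the hard linear-algebra step is to show that its differential is an isomorphism exactly when $f$ is a topological immersion at every $3$-valent vertex. In the non-immersive case, any vertex $v$ whose three adjacent edges map to parallel vectors has, by balancing, three linearly dependent direction vectors, and the obvious compensating variation of the three incident edge lengths produces a nonzero element of $\ker d\text{ev}_\alpha$. Conversely, on an immersed simple stratum, an induction along a spanning tree of $\Gamma$, together with the fact that at each $3$-valent vertex any two of the three edge directions are linearly independent in $\RR^2$, shows that any prescribed infinitesimal displacement of $(f(x_1),\dots,f(x_s))$ is realized by a unique infinitesimal deformation. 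The set of combinatorial types of genus $g$ and Newton polygon $\Delta$ is finite, and for every non-simple or non-immersive $\alpha$ the image $\text{Im}(\text{ev}_\alpha)$ lies in a proper affine subspace of $(\RR^2)^s$; choosing $\omega$ in the complement of the finite union of these proper subsets---an open dense condition---each element of $\C(\omega)$ then lies in a simple immersed stratum on which $\text{ev}_\alpha$ is a local homeomorphism, giving the finiteness of $\C(\omega)$ along with the five listed properties. The main obstacle will be the precise rank analysis of $d\text{ev}_\alpha$: the crucial ingredient is that balancing at a $3$-valent vertex forces pairwise non-parallelism of the edge directions unless the three are collinear, which is the algebraic content of ``topological immersion'' at that vertex.
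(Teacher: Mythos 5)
The paper does not actually prove this proposition: it is imported from \cite{Mik1}, and the only trace of its proof in the present text is the stratum-dimension formula recalled at the beginning of the proof of Proposition \ref{finite 2}. Your strategy --- stratify the moduli space by combinatorial type, bound $\dim \mathrm{Im}(\mathrm{ev}_\alpha)$ stratum by stratum, and discard every stratum whose image misses a generic $\omega$ --- is exactly the argument of \cite{Mik1}, and your bookkeeping is correct as far as it goes: $E^b=V_1-3+3g-\sum_{k\ge 4}(k-3)V_k$, $V_1\le Card(\partial\Delta\cap\ZZ^2)=s+1-g$ with equality precisely when all leaves have weight $1$, and hence the bound $2s-s_V-\sum_{k\ge4}(k-3)V_k$ on the image dimension, with equality forcing a simple type.

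There are, however, two places where the argument as written does not close. First, contracted edges ($w_{f,e}=0$) are permitted by the definition of a tropical morphism and are nowhere excluded; a contracted leaf contributes nothing to $L(C,f)$, so the inequality $V_1\le s+1-g$ fails in their presence, and they are exactly the reason the dimension formula quoted in the paper carries the correction term $-n_c$. Second, and more seriously, the parenthetical disposing of dependent cycle-closing conditions is not a proof, and the reason it offers is false: if every edge of a cycle is parallel to a line $L$, the extra length deformation slides the vertices of that cycle, and everything attached to them, along $L$; this genuinely moves marked points and genuinely enlarges $\mathrm{Im}(\mathrm{ev}_\alpha)$. Moreover this excess can coexist with the deficit coming from a $4$-valent vertex (two non-cycle edges at such a vertex need only have their weighted sum, not each direction, parallel to $L$), so the two effects can cancel in your inequality and leave a non-simple stratum whose image you cannot then discard. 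The standard repair is a balancing propagation: at a \emph{trivalent} vertex, two edges parallel to $L$ force the third to be parallel to $L$ or contracted, so a degenerate cycle on a trivalent immersed curve pushes the entire curve into $L$, contradicting the two-dimensionality of $\Delta$; degenerate cycles therefore only survive on strata that already contain contracted edges, non-immersed vertices, or vertices of valence at least $4$, and these defects must be analyzed jointly rather than one at a time. Your kernel construction at a non-immersed trivalent vertex is correct, but it only lowers the image dimension by one below whatever $\dim\mathcal M_\alpha$ actually is, so it is of no use until the possible excess dimension of the stratum itself has been controlled.
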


Given a generic configuration $\omega$,  we associate a complex multiplicity $\mu^\CC(\widetilde
C)$ to any element $\widetilde C=(C,x_1,\ldots, x_s,f)$ in
$\C(\omega)$. Let
$v$ be a vertex of $C\setminus \text{End(C)}$ and $e_1$ and
$e_2$ two of its adjacent edges. As $v$ is trivalent, the balancing
condition implies that the number
$\mu^\CC(v,f)=w_{f,e_1}w_{f,e_2}|\det(u_{f,e_1},u_{f,e_2})|$ does not depend on
the choice of $e_1$ and $e_2$.

\begin{defi}
The complex multiplicity of an element $\widetilde C$ of
$\C(\omega)$,
denoted by $\mu^\CC(\widetilde C)$, is defined as
$$\mu^\CC(\widetilde C) =\prod_{v\in \text{Vert}(\widetilde C)} \mu^\CC(v,f)$$
\end{defi}

\begin{thm}[Mikhalkin, \cite{Mik1}]\label{corr 1}
For any lattice polygon $\Delta$, any genus $g$, and any
generic configuration $\omega$ of $Card(\partial \Delta\cap\ZZ^2) -1 +g$ points
in $\RR^2$, one has
$$N(\Delta, g)=\sum_{\widetilde C\in \C(\omega)} \mu^\CC(\widetilde C) $$
\end{thm}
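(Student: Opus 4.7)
The plan is to invoke a correspondence via tropicalization over Puiseux series. Let $\KK = \bigcup_{n\geq 1} \CC((t^{1/n}))$ with its natural valuation $\mathrm{val}:\KK^* \to \RR$, extended coordinate-wise to $(\KK^*)^2 \to \RR^2$. For a generic configuration $\omega = \{p_1,\dots,p_s\} \subset \RR^2$, I would lift to a configuration $\omega_\KK = \{P_1,\dots,P_s\} \subset (\KK^*)^2$ with $\mathrm{val}(P_i)=p_i$ and generic leading coefficients. Since $\KK$ is algebraically closed of characteristic $0$ and $N(\Delta,g)$ does not depend on the particular generic configuration, the number of algebraic curves in $(\KK^*)^2$ of genus $g$ and Newton polygon $\Delta$ passing through $\omega_\KK$ equals $N(\Delta,g)$.

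First, I would show that the tropicalization of any such curve $\{F=0\}\subset(\KK^*)^2$ -- i.e.\ the closure of $\mathrm{val}(\{F=0\}\cap(\KK^*)^2)$ -- is the image of some $(C,x_1,\dots,x_s,f)\in\C(\omega)$. Kapranov's theorem identifies this tropicalization with the corner locus of the tropical polynomial obtained by valuing the coefficients of $F$; the dual subdivision of $\Delta$ it induces, together with the balancing condition at each vertex, upgrades it to the image of a parameterized tropical curve with Newton polygon $\Delta$. Genericity of $\omega$ forces this curve into $\C(\omega)$: the $s$ point-constraints cut the moduli of parameterized tropical curves of genus $g$ and Newton polygon $\Delta$ down to a zero-dimensional locus, whose points by Proposition \ref{finite 1} are simple immersed curves with only $1$- and $3$-valent vertices.

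The heart of the proof is the local count of lifts. For each $\widetilde C=(C,x_1,\dots,x_s,f)\in\C(\omega)$, the dual subdivision of $\Delta$ partitions $\Delta$ into triangles dual to the $3$-valent vertices of $C$ and parallelograms dual to the transversal self-crossings of $f(C)$; the algebraic lifts factor as a product of local contributions, one per $3$-valent vertex. Near such a vertex $v$ with adjacent edges $e_1,e_2,e_3$, the leading-term binomial (resp.\ trinomial) system obtained by restricting $F$ to the dual triangle has exactly $\mu^\CC(v,f)=w_{f,e_1}w_{f,e_2}|\det(u_{f,e_1},u_{f,e_2})|$ solutions in $(\CC^*)^2$ -- twice the lattice area of the triangle -- and once the $s$ marked points are fixed each local solution extends uniquely to a $\KK$-lift by a Hensel / inverse function argument along the edges. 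Multiplying over vertices yields $\prod_v \mu^\CC(v,f)=\mu^\CC(\widetilde C)$ algebraic curves per tropical curve, and summing over $\C(\omega)$ recovers $N(\Delta,g)$.

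The main obstacle I expect is making this local-to-global count simultaneously exhaustive and transverse: one must verify that, for generic $\omega_\KK$, the Hensel step applies at every vertex, that local solutions match consistently across the edges of $C$, and that no algebraic curve tropicalizes to a combinatorial type outside $\C(\omega)$ (e.g.\ one with a higher-valent vertex, a non-immersed leaf, or a marked point sitting at a vertex). Both issues reduce to a dimension count on the evaluation map from the combinatorial moduli space of parameterized tropical curves with prescribed Newton polygon and genus to $(\RR^2)^s$: the non-generic combinatorial strata have positive codimension, so a generic $\omega$ avoids them, which in turn guarantees the transversality needed for the Hensel step and forbids spurious lifts.
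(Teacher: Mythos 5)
The first thing to say is that the paper does not prove Theorem \ref{corr 1} at all: it is imported as a black box from \cite{Mik1} (Mikhalkin's correspondence theorem), and the paper's own contribution starts only downstream of it, in Section \ref{Floor trop}. So there is no internal proof to compare against; your proposal has to be measured against the literature. What you outline is the non-Archimedean version of the correspondence theorem --- tropicalize curves over the Puiseux series field $\KK$ via the valuation, then count lifts of each tropical solution --- which is the route taken by Shustin and by Nishinou--Siebert, rather than Mikhalkin's original argument in \cite{Mik1}, which degenerates the complex structure on $(\CC^*)^2$ to a ``large complex limit'' and counts holomorphic curves near that limit via complex tropical curves and a patchworking-type gluing. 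Both routes are legitimate and yield the same multiplicity formula; the non-Archimedean one is arguably cleaner to set up but pushes the difficulty into the lifting theorem.

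That lifting step is where your sketch stops being a proof. The assertion that the truncated system at each $3$-valent vertex has exactly $\mu^\CC(v,f)=w_{f,e_1}w_{f,e_2}|\det(u_{f,e_1},u_{f,e_2})|$ solutions, each extending uniquely, is not how the count actually decomposes: in all published proofs the local count at a vertex dual to a triangle $T$ is $2\,\mathrm{Area}(T)$ divided by weight factors, and the missing factors of $w_e$ reappear as the number of ways to glue along each bounded edge of weight $w_e$ (matching $w_e$-th roots of leading coefficients) and in the count of lifts of a marked point lying on an edge of weight $w_e$; only after this bookkeeping does the product collapse to $\prod_v \mu^\CC(v,f)$. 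Related gaps: you need that the parameterized genus-$g$ tropical curve (not just the corner locus with its dual subdivision, which only recovers the image with weights) is what the algebraic curve degenerates to, i.e.\ that no genus drops or superabundant types occur --- your appeal to the dimension count and Proposition \ref{finite 1} is the right idea but is itself a quoted result; and the exhaustiveness claim (no algebraic curve through $\omega_\KK$ tropicalizes outside $\C(\omega)$) requires an a priori bound showing the lifts you construct already account for all $N(\Delta,g)$ curves, which is how \cite{Mik1} closes the argument. In short: correct architecture, standard alternative route, but the central multiplicity computation is asserted rather than proved, and as literally stated the vertex-local count is off by weight factors that must be recovered in the gluing.
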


\begin{exa}
Images $f(C)$ of all irreducible tropical curves of genus 0 and
Newton polygon $\Delta_3$ in $\C(\omega)$ for the  configuration
$\omega$ of 8 points  depicted in Figure \ref{Tcub0}a are depicted in
Figure \ref{Tcub0}b, $\ldots$, j. We verify
that $N(\Delta_3,0)=12$ (compare with Table \ref{cubic}).
\end{exa}

\subsection{Real multiplicities of a tropical curve}

We explain now how to adapt Theorem \ref{corr 1} to real enumerative geometry.
Naturally, we need to consider tropical curves endowed with a real structure.

\begin{defi}
A real parameterized tropical curve with $n$ marked points is a  $(n+3)$-uplet
$(C, x_1, \ldots, x_n, f, \phi)$ where $(C,x_1,\ldots,x_n,f)$ is a
parameterized marked  tropical curve and  $\phi : C\to C$ is an isometric
involution
such that
\begin{itemize}
\item there exists a permutation $\sigma$ such that
  for any $1\le i\le
  n$,  $\phi(x_i)=x_{\sigma(i)}$,
\item $f=f\circ\phi$.
\end{itemize}
\end{defi}

The \textit{real and imaginary parts} of a real parameterized tropical curve
with $n$ marked
points $\widetilde C=(C, x_1, \ldots, x_n, f, \phi)$ are naturally defined as
$$\Re(\widetilde C)=\text{Fix}(\phi) \ \ \ \text{and}
\ \ \ \Im(\widetilde C)=C\setminus\Re(\widetilde C)$$

\begin{exa}
Two examples of real parameterized tropical curves with 4 marked points are
depicted in Figure
\ref{real curves}, the abstract curve is depicted on the left and its
image by $f$ in $\RR^2$ is depicted on the right. Very close edges in
the image represent edges which are mapped to the same edge by $f$.
The parameterized tropical curve in Figure \ref{real curves}a
has 2 equal marked points, and $\phi$ is the symmetry with
respect to the non-leaf edge. In Figure
\ref{real curves}b,
$\phi$  exchanges the edges containing $x_1$ and $x_2$.
\end{exa}
\begin{figure}[h]
\centering
\begin{tabular}{ccc}
\includegraphics[height=5cm,
  angle=0]{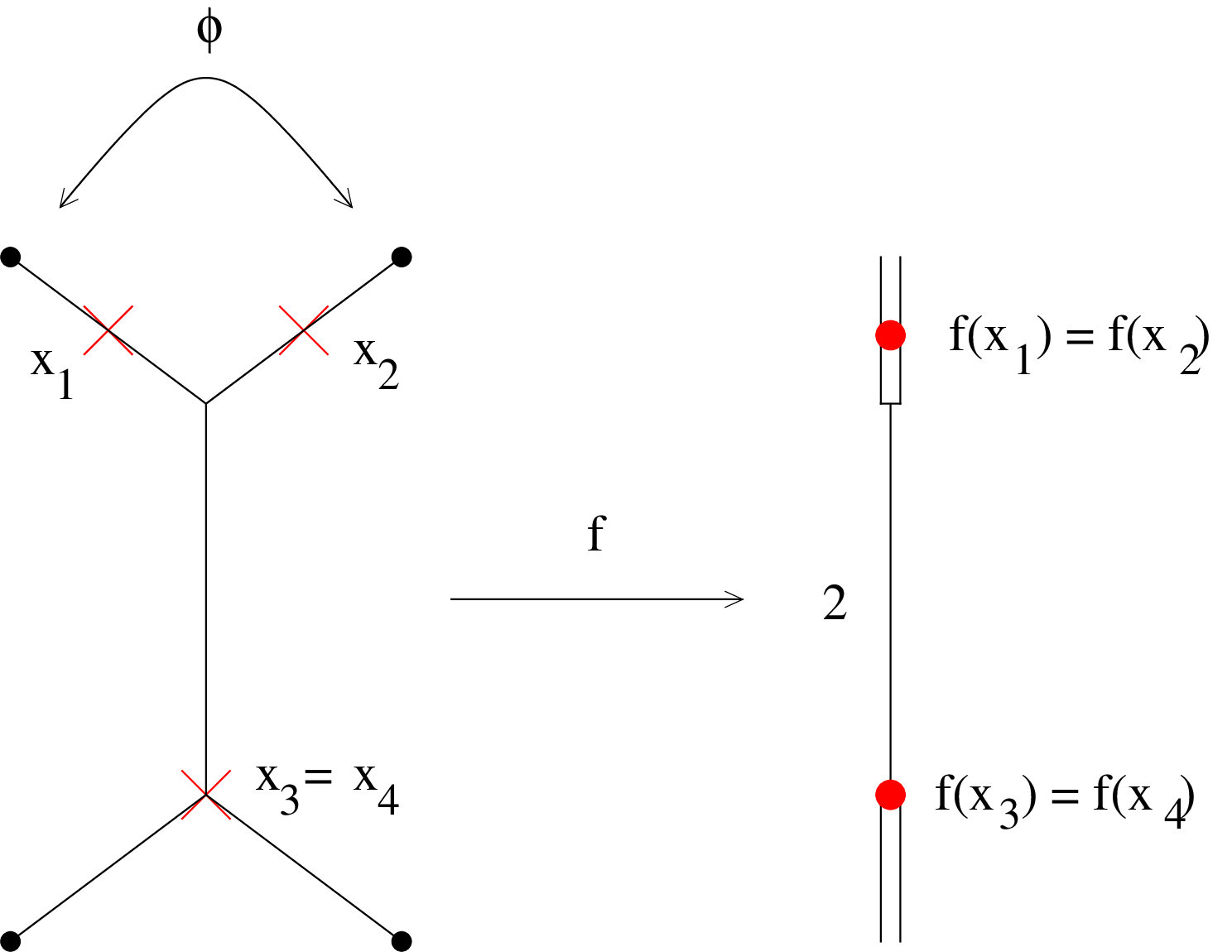}& \hspace{3ex} &
\includegraphics[height=5cm, angle=0]{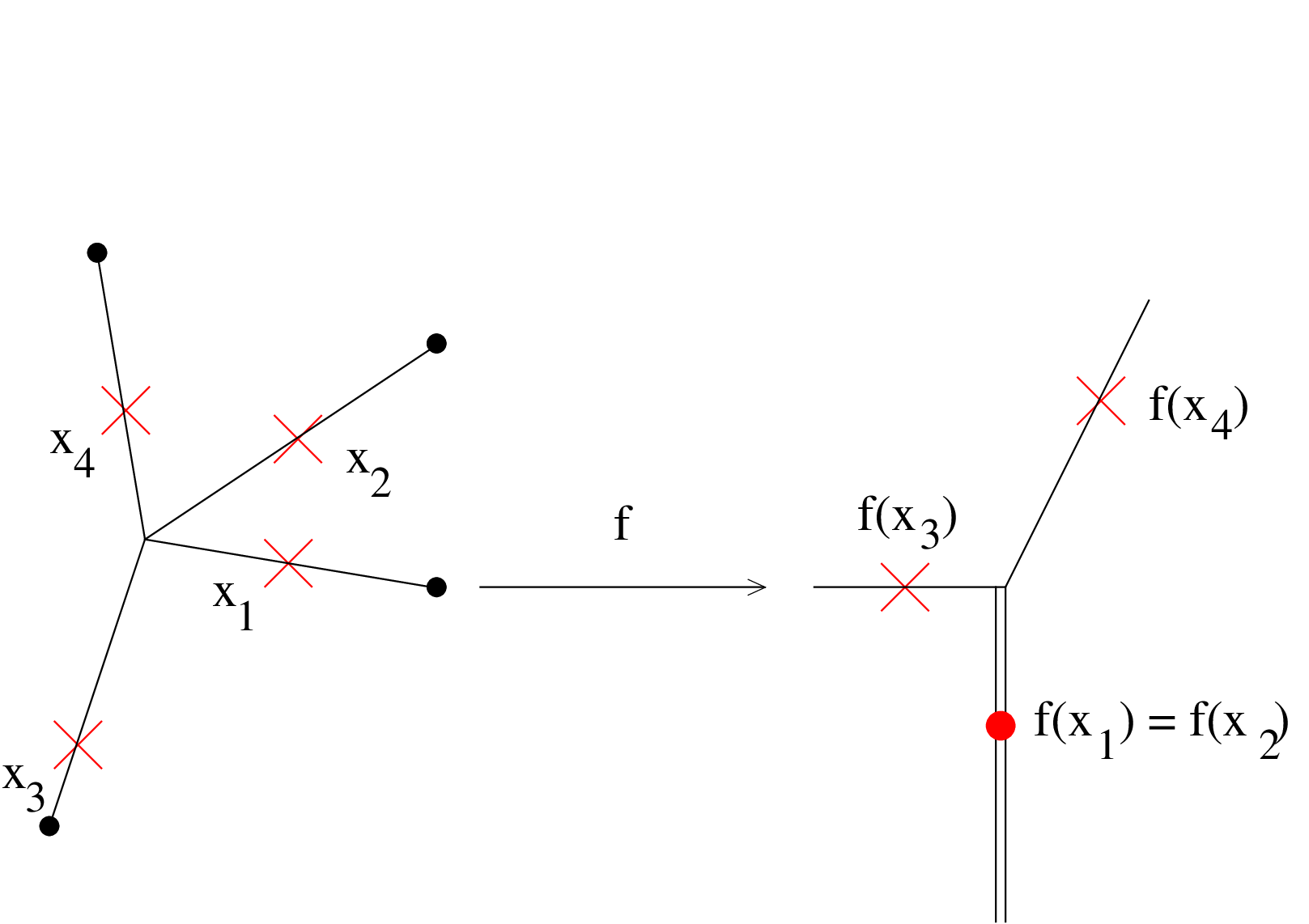}
\\
\\a) && b)
\end{tabular}
\caption{Real tropical curves}
\label{real curves}
\end{figure}

As usual, we fix a lattice
polygon $\Delta$ and define
$s=Card(\partial \Delta\cap \ZZ^2) -1$.
Let $r$ be an non-negative integer such that $s-2r\ge 0$,  and
choose
a collection $\omega_r=\{p_1,\ldots, p_{s-r}\}$
of $s-r$
points in $\RR^2$. We should think of  $\omega_r$ as the image under
the map $(z,w)\mapsto (\log|z|,\log|w|)$ of a configuration
$\{q_1,\ldots,q_{s-2r},q_{s-2r+1},\overline{q_{s-2r+1}},\ldots
,q_{s-r},\overline{q_{s-r}}\}$ of $s$
points in $(\CC^*)^2$, where $\overline z$ is the complex conjugated
of $z$. Hence, points $p_i$ with $s-2r+1\le i\le s-r$ represent pairs of
complex conjugated points.
 Denote by  $\RR\C(\omega_r)$ the set of irreducible real parameterized tropical
curves with $s$ marked points $\widetilde C=(C, x_1, \ldots, x_s, f, \phi)$
of genus 0 and Newton polygon $\Delta$ satisfying the following conditions

\begin{itemize}
\item for any $1\le i\le s-2r$, $f(x_i)=p_i$,
\item for any $1\le i \le r$,
  $f(x_{s-2r+2i-1})=f(x_{s-2r+2i})=p_{s-2r+i}$,
\item if $1\le i \le r$ and if
   $x_{s-2r+2i-1}=  x_{s-2r+2i} $, then  $x_{s-2r+2i}$ is a
   vertex of $C$,
\item any edge in $\Re(\widetilde C)$ has an odd weight.
\end{itemize}

\begin{prop}\label{finite 2}
For a generic configuration of points $\omega_r$, the set
$\RR\C(\omega_r)$ is
finite. Moreover, for any real parameterized curve $\widetilde C=(C, x_1, \ldots,
x_s, f, \phi)$ in  $\RR\C(\omega_r)$, the curve $C$ has only 1, 3 or 4
valent
vertices,  any neighborhood of any
3 or 4-valent vertex of $C$ is never mapped to a segment by $f$, any
4-valent vertex of $C$ is adjacent to 2 edges in
$\Im(\widetilde C)$ and 2 edges in $\Re(\widetilde C)$,
 and any leaf of
$C$ is of weight 1.
\end{prop}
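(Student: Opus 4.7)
The plan is to adapt Mikhalkin's proof of Proposition \ref{finite 1} to the real setting by stratifying $\RR\C(\omega_r)$ according to the \emph{combinatorial type} $\tau$ of $\widetilde C=(C,x_1,\ldots,x_s,f,\phi)$: the topological graph underlying $C$, the combinatorial action of $\phi$ on it, the placement of each $x_i$ on an edge or a vertex, and the primitive directions $u_{f,e}$ together with the weights $w_{f,e}$. For each $\tau$ one obtains a polyhedral moduli space $\mathcal{M}_\tau$ parameterizing real parameterized tropical curves of type $\tau$ (with the axioms of $\RR\C(\omega_r)$ built in: the images within each $r$-pair coincide and every edge of $\text{Fix}(\phi)$ carries odd weight), together with an evaluation map
\[
\mathrm{ev}_\tau\colon \mathcal{M}_\tau \longrightarrow (\RR^2)^{s-r},\quad \widetilde C \mapsto (f(x_1),\ldots,f(x_{s-2r}),f(x_{s-2r+1}),\ldots,f(x_{s-r})).
\]
The proposition reduces to showing that $\dim \mathrm{ev}_\tau(\mathcal{M}_\tau) < 2(s-r)$ whenever $\tau$ violates one of the four claimed local properties: there are only finitely many types with given Newton polygon $\Delta$ and genus $0$, so a generic $\omega_r$ will avoid the bad types, and each good type yields generically zero-dimensional fibers of $\mathrm{ev}_\tau$.

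For the dimension count I would use the quotient $C/\phi$, which carries the structure of a tropical curve whose edges correspond either to $\phi$-orbits of pairs of exchanged edges (the image of $\Im(\widetilde C)$) or to single $\phi$-fixed edges (the image of $\Re(\widetilde C)$); the morphism $f$ factors through it, and the $s-r$ image constraints on $\widetilde C$ translate into $s-r$ image constraints on $C/\phi$. A direct transcription of Mikhalkin's complex dimension formula then shows that $\dim \mathcal{M}_\tau \leq 2(s-r)$, with equality possible only if every vertex of $C$ has valence $1$, $3$ or $4$, no neighborhood of a $3$- or $4$-valent vertex is collapsed to a segment by $f$, and every leaf has weight $1$; each of these constraints corresponds to a standard positive defect in the local moduli count (for instance a $k$-valent vertex with $k \geq 5$ contributes a defect of $k-3$ inherited from its image in $C/\phi$, and a leaf of weight $w>1$ contributes a defect of $w-1$).

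The only essentially new phenomenon, and the main obstacle, is the local analysis at a $4$-valent vertex $v$ fixed by $\phi$. The action of $\phi$ on the four adjacent edges has three possible orbit structures: all four edges fixed, two pairs of exchanged edges, or two fixed edges and one exchanged pair. In the first case all adjacent edges lie in $\Re(\widetilde C)$ and hence carry odd weight; the balancing condition at $v$ combined with $\phi$-equivariance produces a codimension-one linear relation on the primitive directions, dropping $\dim \mathcal{M}_\tau$ by one. In the second case the two edges of each $\phi$-orbit have opposite image directions, so a neighborhood of $v$ is mapped by $f$ to a segment, again dropping one dimension. Only the $2+2$ splitting in the third case, which corresponds to gluing two $3$-valent vertices of $C/\phi$ along their images via $\phi$, preserves the dimension balance and yields exactly the configuration asserted in the proposition. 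Finiteness of $\RR\C(\omega_r)$ for generic $\omega_r$ then follows from the generic finiteness of $\mathrm{ev}_\tau$ on each regular type and from the finiteness of the set of regular types with fixed $\Delta$ and genus $0$.
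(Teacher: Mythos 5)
Your overall strategy---stratify $\RR\C(\omega_r)$ by combinatorial type and show that every ``bad'' type has image of dimension $<2(s-r)$ under evaluation---is exactly the strategy of the paper. But the sentence ``a direct transcription of Mikhalkin's complex dimension formula then shows that $\dim\mathcal{M}_\tau\le 2(s-r)$'' is where essentially all of the content of the proof lives, and as stated it does not go through. First, $C/\phi$ with the induced map is not a parameterized tropical curve in the sense required by that formula: at a junction vertex (a vertex whose neighborhood meets both $\Re(\widetilde C)$ and $\Im(\widetilde C)$) the balancing condition fails unless you double the weights of the folded imaginary edges, and the quotient can acquire $2$-valent vertices (your second case of a $4$-valent fixed vertex with two exchanged pairs), for which the $\mathrm{val}(v)-3$ accounting in the formula contributes a spurious $+1$ rather than the dimension drop you claim. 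Second, and more importantly, a direct transcription would only bound $\dim\mathcal{M}_\tau$ by (number of leaves of $C/\phi$) $-1+s$, which is too weak: to land on $2(s-r)$ you must exploit (i) that conjugate pairs of leaves of $C$ fold to single leaves of the quotient, giving $x^r+2x^i\le s+1$ where $x^r$, $x^i$ are the numbers of real and folded-imaginary leaves; (ii) that the imaginary part must pass through the $p$ generic points carrying genuinely distinct conjugate pairs of marked points, forcing $x^i\ge p$; and (iii) that the space of admissible marked-point configurations has dimension $s-p-2q$, not $s$, where $q$ counts conjugate pairs that coincide at a vertex and $p+q=r$. None of this bookkeeping appears in your sketch, and without it the inequality $\dim\mathcal{M}_\tau\le 2(s-r)$ is an assertion, not a consequence.

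For comparison, the paper does not pass to the quotient $C/\phi$ globally. It builds two auxiliary curves: $C^r$, obtained from $\Re(\widetilde C)$ by attaching a ray at each of the $J$ junction vertices, and $C^i$, obtained from $\Im(\widetilde C)/\phi$ by replacing the edges adjacent to junctions with rays; it applies Mikhalkin's dimension formula to each, then subtracts $n^i$ (the number of components of $C^i$, each of which imposes one matching condition at a junction, with $n^i\ge J$). Equality in the resulting chain of inequalities simultaneously forces $n^i=J$ (whence each junction vertex is exactly $4$-valent with two real and two imaginary edges), all non-junction non-end vertices trivalent, $s+1$ leaves, and no contracted edges. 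If you want to salvage the quotient picture you can, since deformations of the reweighted quotient correspond to $\phi$-equivariant deformations of $(C,f)$, but you would still have to prove items (i)--(iii) above; they are the real mathematical input, not the local case analysis at $4$-valent vertices, which in both approaches falls out of the tightness of the dimension count.
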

\begin{proof}
Let $\widetilde C$ be an element of  $\RR\C(\omega_r)$.
Passing through $s-r$ points in $\RR^2$ in general position imposes
$2(s-r)$ independent  conditions on a tropical curve.
Since all tropical maps are piecewise-linear, to prove
the proposition it suffices to show that the dimension of the space
of all real parameterized
tropical curves with the same combinatorial type as $\widetilde C$
has dimension  $2(s-r)$, and that any curve
with this combinatorial type satisfies the proposition.

Recall that the space
of all parameterized irreducible
tropical curves $(C,f)$ of genus 0 with $x$ leaves and of a given
combinatorial type is a
polyhedral complex of dimension
$$x-1 -
\sum_{v\in\text{Vert}(C)\setminus \text{End}(C)} (\text{val}(v)-3) -n_c$$
where $\text{val}(v)$ is the valence of a vertex $v$, and $n_c$ is the
number of  edges of $C$  contracted by $f$ (see \cite{Mik1}).
Let $\widetilde C=(C, x_1, \ldots,
x_s, f, \phi)$ be an element of  $\RR\C(\omega_r)$.
We may prepare two auxiliary tropical curves $f^r:C^r\to\RR^2$ and
$f^i:C^i\to\RR^2$
from $f:C\to\RR^2$. We say that $v\in C$ is a {\em junction} vertex if
any small neighborhood
of $v$ intersects both $\Re (\widetilde C)$ and $\Im (\widetilde C)$. We denote by $J$ the
number of junction vertices of $C$. Since any edge of $\Re (\widetilde
C)$ has odd weight, a junction vertex is at least 4-valent.

We define $C^r$ to be the result of adding to $\Re (\widetilde C)$ an
infinite ray at
each junction vertex of $C$. We define $f^r$ so that it coincides with $f$ on
$\Re (\widetilde C)$. The values of $f^r$ at the new rays are determined by the
balancing condition.

Connected components of $\Im(\widetilde C)$
are naturally coupled in pairs exchanged by the map $\phi$. To define
$C^i$, we take
$\Im (\widetilde C)/\phi$ and replace all  edges adjacent to a
junction vertex with an infinite ray.
We let $f^i:C^i\to\RR^2$ to be the tropical map that
agrees with $f$ on $\Im (\widetilde C)/\phi$. We denote by $n^i$ the number of
connected components of $C^i$. Note that $n^i\ge J$, and that equality
holds if and only if each junction vertex is 4-valent.

We denote by $x^r$ (resp. $x^i$) the number of
leaves of $C$ which are also leaves of  $C^r$ (resp. $C^i$).
Since the curve $C$ has genus 0, the curve $C^r$ is connected and each component
of $C^i$ is adjacent to exactly one junction vertex. Hence, the space
of  parameterized
tropical curves  with the same combinatorial type as $(C^r,f^r)$ has
dimension
$$x^r + J -1 -
\sum_{v\in\text{Vert}(C^r)\setminus \text{End}(C^r)} (\text{val}(v)-3) -n_{c^r}$$
and the space
of  parameterized
tropical curves  with the same combinatorial type as $(C^i,f^i)$ has
dimension
$$x^i  -
\sum_{v\in\text{Vert}(C^i)\setminus \text{End}(C^i)} (\text{val}(v)-3) -n_{c^i}$$
To get $f$ from $f^r$ and $f^i$ these maps must agree at each junction. Thus
each connected component of $C^i$ imposes one condition, and
the space
of  real parameterized
tropical curves  with the same combinatorial type as $(C,f)$ has
dimension
$$x^r +x^i + J - n^i -1 -
\sum_{v\in\text{Vert}(C^r)\setminus \text{End}(C^r)} (\text{val}(v)-3) -
\sum_{v\in\text{Vert}(C^i)\setminus \text{End}(C^i)}(\text{val}(v)-3) -n_{c^r} -n_{c^i}$$
If we consider, in addition, a configuration of $s$ points on $C$ then
our dimension
increases by $s$. Recall though that our points are constrained by the
condition that the last $2r$ points are split into pairs invariant with
respect to the involution $\phi$. Furthermore, recall that if such a
pair consists of the same point taken twice then it must be a vertex of $C$.

Denote with $p$ the number of pairs of distinct points in $C$
invariant with respect to $\phi$ and with $q$ the number of pairs
made from the vertices of $C$. Clearly we have $p+q=r$, and
the dimension of allowed configurations
is $s-p-2q$.
Since $f^i(C^i)$ passes through $p$ generic points in $\RR^2$, we have
$x_i\ge p$, and since $x^r+2x^i\le s+1$ we have $x^r+x^i \le s+1-p$.
Hence, the space
of  real parameterized
tropical curves  with $s$ marked points with the same combinatorial
type as $\widetilde C$ has
dimension at most
$$2(s-r) + J - n^i  -
\sum_{v\in\text{Vert}(C^r)\setminus \text{End}(C^r)} (\text{val}(v)-3) -
\sum_{v\in\text{Vert}(C^i)\setminus \text{End}(C^i)}(\text{val}(v)-3)
-n_{c^r} -n_{c^i}$$
Since $\widetilde C$ is in  $\RR\C(\omega_r)$, its space of deformation must have dimension
at least $2(s-r)$. Hence the curve $C$ has exactly $s+1$ leaves, $
n^i=J$, and any
vertex of $C$ which
is not an end or a junction vertex is trivalent.
\end{proof}

For a generic configuration $\omega_r$  and $(C, x_1, \ldots,
x_s, f,\phi)$ in $\RR\C(\omega_r)$, Proposition \ref{finite 2} implies that the real structure
$\phi$ on $C$ is uniquely determined by the marked parameterized
tropical curve $(C, x_1, \ldots,
x_s, f)$. Hence we will often omit to precise the map $\phi$ for
elements of
$\RR\C(\omega_r)$.
 Moreover,
$\Im(\widetilde C)/\phi$ is a (possibly disconnected) non-compact graph, and
a vertex $v$ (resp. edge) inside $\Im(\widetilde C)/\phi$ has a natural
complex
multiplicity
$\mu^\CC(v,f)$ (resp. weight) induced by the corresponding
multiplicity of vertices (resp. edges) of
$C$.
 If $v$ is a 4-valent vertex of $C$,
then by Proposition \ref{finite 2}, there exist an edge
$e_1\in\Re(\widetilde C)$ and an edge $e_2\in\Im(\widetilde C)$
adjacent to $v$.
Define $\mu^\RR_r(v,f)=w_{f,e_1}w_{f,e_2}|det(v_{f,e_1},v_{f,e_2})|$.
Define the
integer $o_r^\RR$ to be the number of vertices $v$ in
$\Re(\widetilde C)$
satisfying one of the following conditions
\begin{itemize}
\item $v$ is 3-valent and $\mu^\CC(v,f)=3\ mod \ 4$,
\item  $v$ is 4-valent adjacent to an edge $e\in\Im(\widetilde C)$, and
  $\mu^\RR(v,f)=w_{f,e}+1 \ mod \ 2$.
\end{itemize}

Finally,
define the integer $o_r^\CC$ to be the  number of vertices $v$ of
$\Im(\widetilde C)/\phi$ with odd $\mu^\CC(v,f)$.

\begin{defi}
The $r$-real  multiplicity of an element $\widetilde C $ of $\RR
\C(\omega_r)$,
denoted by $\mu^\RR_r(\widetilde
C)$, is defined as
$$\mu^\RR_r(\widetilde C) =(-1)^{o_r^\RR+o_r^\CC}\prod_{v\in
  \text{Vert}(\Im(\widetilde C)/\phi)} \mu^\CC(v,f)
\prod_{v\in
  \text{Vert}(\widetilde C), \ f(v)\in\omega_r} \mu^\CC(v,f)
\prod_{v\in
  \text{Vert}(\widetilde C),\ v \text{ is 4-valent}} \mu^\RR(v,f)$$
\end{defi}

The tropical curves and their multiplicity we are considering here
differ slightly from the one in \cite{Sh8}. This difference comes
from the fact that we are dealing with parameterization of tropical
curves, and that Shustin deals with the cycles resulting as the images of the curves
rather than parameterized curves.

\begin{rem}
If $r=0$, then for any real parameterized curve $(C, x_1, \ldots,
x_s, f, \phi)$ in  $\RR\C(\omega_r)$, we have $\phi=Id$, and
the map $(C, x_1, \ldots,
x_s, f, \phi)\mapsto (C, x_1, \ldots,
x_s, f)$ is a bijection from the set $\RR\C(\omega_r)$ to the set of
elements of $\C(\omega_r)$ with odd complex multiplicity.
\end{rem}

\begin{thm}[Mikhalkin, \cite{Mik1}, Shustin, \cite{Sh8}]\label{corr 2}
Let $\Delta$ be a lattice polygon such that Welschinger
invariants are  defined for the corresponding
toric surface $Tor(\Delta)$ equipped with its tautological real structure. Then for
any integer $r$ such that $s-2r\ge 0$,   and any
generic configuration $\omega_r$ of $s -r$ points
in $\RR^2$, one has
$$W(\Delta, r)=\sum_{\widetilde C\in \RR\C(\omega_r)} \mu^\RR_r(\widetilde C) $$
\end{thm}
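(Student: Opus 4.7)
The plan is to prove Theorem \ref{corr 2} by adapting the tropical--classical patchworking correspondence of Mikhalkin (Theorem \ref{corr 1}) to the real setting, tracking the real structure $\phi$ at every step. The proof follows Shustin's strategy in \cite{Sh8}, but adjusted so that the local data at vertices reflect the parameterization rather than the image cycle.

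First, I would verify that the sum $\sum_{\widetilde C\in \RR\C(\omega_r)} \mu^\RR_r(\widetilde C)$ is independent of the choice of generic $\omega_r$. The space of configurations is stratified by walls corresponding to non-generic tropical curves: curves with a $4$-valent non-junction vertex, two distinct marked points coinciding on the same edge, a leaf of weight at least $2$, or a real marked point meeting a trivalent vertex not already forced to be a vertex by a coinciding conjugate pair. Each wall-crossing must preserve the total $r$-real multiplicity; this is a finite case analysis comparing the multiplicities appearing on the two sides, analogous to the proof that $W(\Delta,r)$ itself is well-defined.

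Second, I would choose a very particular generic $\omega_r$ arising as the image under $\mathrm{Log}_t$, for $t\gg 1$, of a configuration $q_1,\dots,q_{s-2r},q_{s-2r+1},\bar q_{s-2r+1},\dots,q_{s-r},\bar q_{s-r}$ in $(\CC^*)^2$ tropical enough that Mikhalkin's classical correspondence applies. Using a real version of Viro's patchworking, one sets up a bijection between pairs (a curve $\widetilde C\in\RR\C(\omega_r)$, a choice of local lifting at each vertex) and real algebraic curves of Newton polygon $\Delta$ and genus $0$ in $(\CC^*)^2$ through the lifted configuration. This reduces the count of real curves to a local problem at each vertex of $\widetilde C$, and the Welschinger sign to a product of local signs.

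Third, one carries out the vertex-by-vertex lifting analysis. A vertex $v$ lying in $\Im(\widetilde C)/\phi$, or a trivalent vertex in $\Re(\widetilde C)$ pinned by a real marked point, contributes $\mu^\CC(v,f)$ complex lifts; in the first case these pair with their conjugates to produce the same number of real algebraic curves, while in the second case they contribute as in the complex count. A trivalent vertex in $\Re(\widetilde C)$ not pinned by a real marked point forces the local polynomial to be real, and a binomial root-extraction computation shows that the real lifts acquire a sign $(-1)$ precisely when $\mu^\CC(v,f)\equiv 3\pmod 4$, which is the contribution of $v$ to $o_r^\RR$. A $4$-valent junction vertex carries one real edge $e$ and one pair of conjugate edges; Shustin's normal-form computation produces $\mu^\RR(v,f)$ real lifts and a sign determined by the parity of $\mu^\RR(v,f)+w_{f,e}+1$, giving the second contribution to $o_r^\RR$. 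The global factor $(-1)^{o_r^\CC}$ accounts for the isolated-node type of the images of odd-multiplicity vertices in $\Im(\widetilde C)/\phi$. Assembling all local factors reproduces $\mu^\RR_r(\widetilde C)$ as defined in the statement, and summing over $\RR\C(\omega_r)$ yields $W(\Delta,r)$.

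The main obstacle is the lifting at $4$-valent junction vertices, where the algebraic curve must simultaneously carry a real branch and a pair of complex-conjugate branches. Verifying that the node types of the resulting real algebraic curves yield exactly the sign appearing in $\mu^\RR(v,f)$ requires a delicate normal-form analysis. This is precisely where the distinction between the parameterized setup of the present paper and Shustin's cycle setup becomes visible: the parameterization records which branches are exchanged by $\phi$, and this information must be kept throughout the patchworking in order to identify the Welschinger sign correctly.
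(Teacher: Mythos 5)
The paper does not prove this statement: Theorem \ref{corr 2} is imported as a known result of Mikhalkin and Shustin, and the only comment the authors make is that their multiplicities differ from Shustin's because they work with parameterized curves rather than image cycles. So there is no in-paper proof to match your sketch against; what can be assessed is whether your outline of the external proof is sound.

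Your general strategy (patchworking a tropical configuration $\mathrm{Log}_t(q_1,\dots)$ and counting real lifts vertex by vertex) is the right one, but your first step contains a genuine error. You propose to first establish that $\sum_{\widetilde C\in\RR\C(\omega_r)}\mu^\RR_r(\widetilde C)$ is independent of the generic configuration $\omega_r$ by a wall-crossing analysis, ``analogous to the proof that $W(\Delta,r)$ itself is well-defined.'' The Remark immediately following Theorem \ref{corr 2} in the paper states that this independence is \emph{false} in general: for $r>0$ and $Tor(\Delta)$ singular, the sum of $r$-real multiplicities does depend on $\omega_r$. A purely local wall-crossing argument of the kind you describe would be insensitive to the global hypothesis that Welschinger invariants are defined for $Tor(\Delta)$, and hence would ``prove'' an invariance that fails; you never indicate where that hypothesis enters your case analysis. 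In the actual proofs the logic runs the other way: the correspondence between $\RR\C(\omega_r)$ and real algebraic curves is established directly for an arbitrary fixed generic $\omega_r$ (via the lifting analysis of your steps two and three), and configuration-independence of the tropical sum is then a \emph{consequence} of Welschinger's invariance theorem on the algebraic side. Separately, your step three names rather than performs the local computations — the binomial root-extraction giving the $\mu^\CC(v,f)\equiv 3 \bmod 4$ sign, the normal form at $4$-valent junction vertices, and the identification of isolated versus non-isolated nodes — and these computations are essentially the entire content of the theorem, so as written the proposal is an annotated table of contents for Shustin's argument rather than a proof.
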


\begin{rem}
Theorem \ref{corr 2} implies that the right hand side of last equality
does not depend on $\omega_r$ for smooth Del Pezzo toric surfaces
$Tor(\Delta)$.
 However, this is not true in general and one can easily check that the
sum of $r$-real multiplicities over all
tropical curves in $ \RR\C(\omega_r)$
in the case of $r>0$ does not have to be invariant if $Tor(\Delta)$ is singular
(see also \cite[Section 7.2]{Br8}). 
\end{rem}

\begin{exa}
If   $\omega_3=\{p_1,p_2,p_3,p_4,p_5\}$ is the configuration
depicted in Figure \ref{TCub}a, then images of all  parameterized
tropical curves of
genus 0 and
Newton polygon $\Delta_3$ in $\RR\C(\omega_3)$ are depicted in Figures
\ref{TCub}b, c, d, e, and f (compare with Table \ref{cubic}). Figure
\ref{TCub}e) represents the image of 2 distinct marked parameterized
tropical curves in $\RR\C(\omega_3)$, depending on the position of
marked points on the connected components of $\Im(\widetilde C)$.
Hence we verify that $W(\Delta_3,3)=2$.
\end{exa}

\begin{figure}[h]
\centering
\begin{tabular}{ccccccc}
\includegraphics[height=6cm, angle=0]{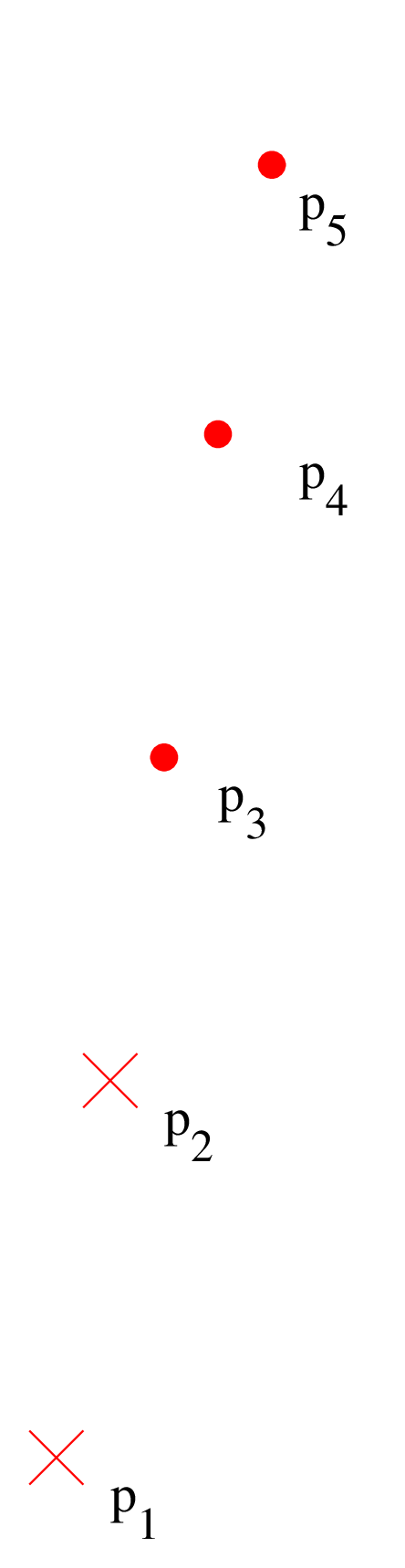}& \hspace{3ex} &
\includegraphics[height=6cm, angle=0]{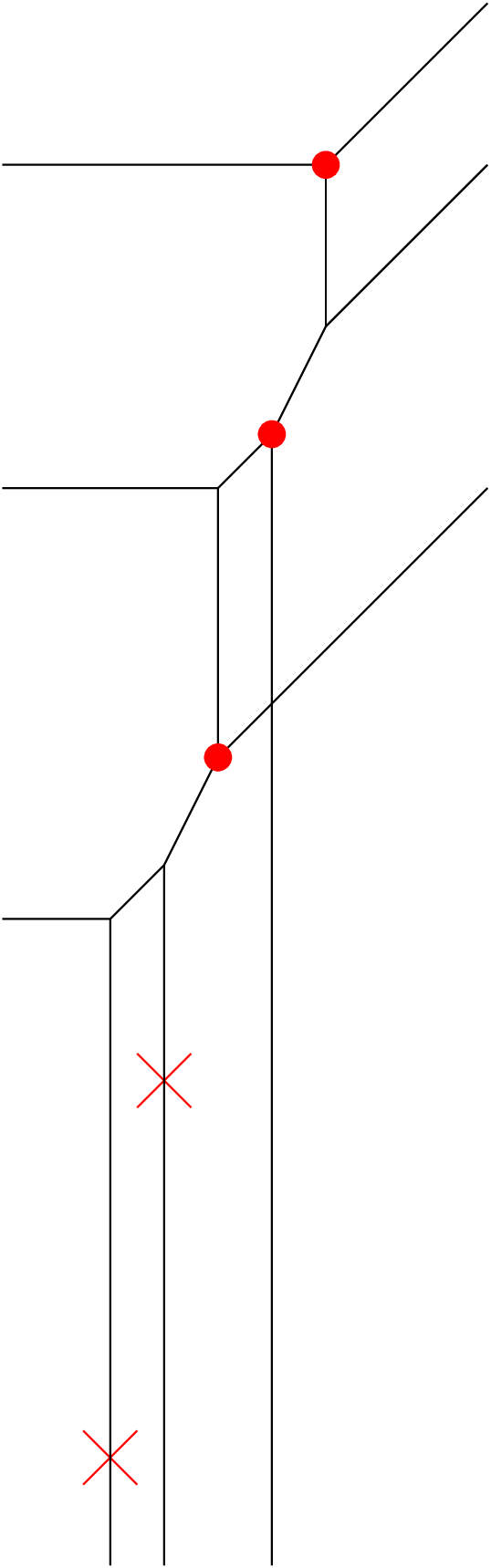}&
\includegraphics[height=6cm, angle=0]{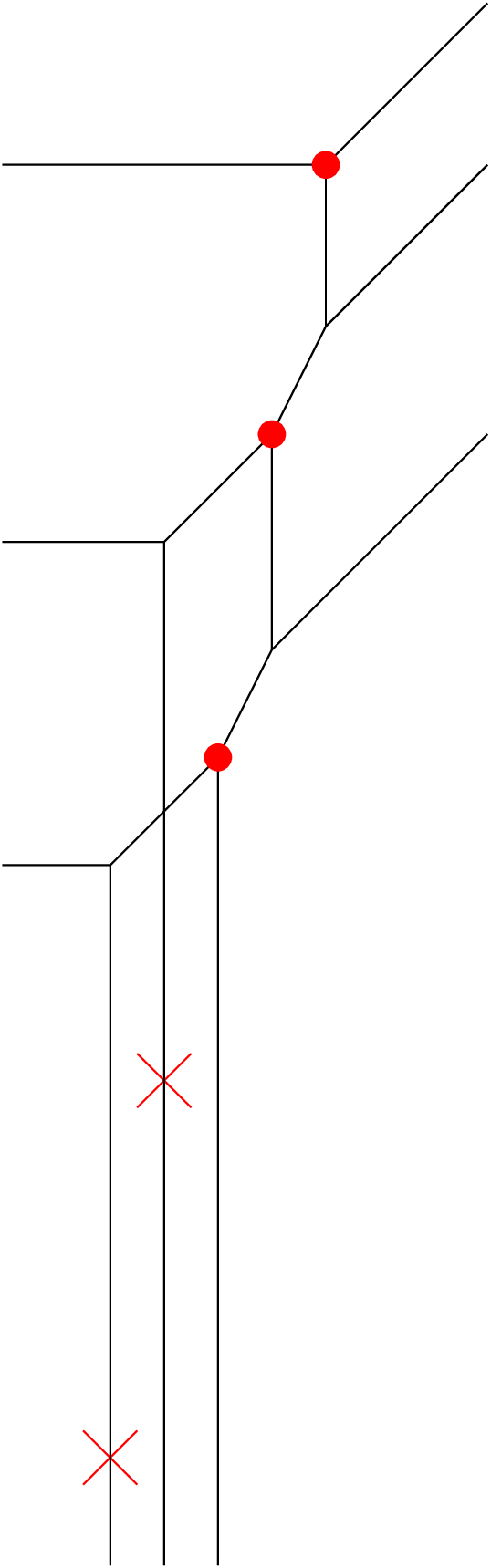}&
\includegraphics[height=6cm, angle=0]{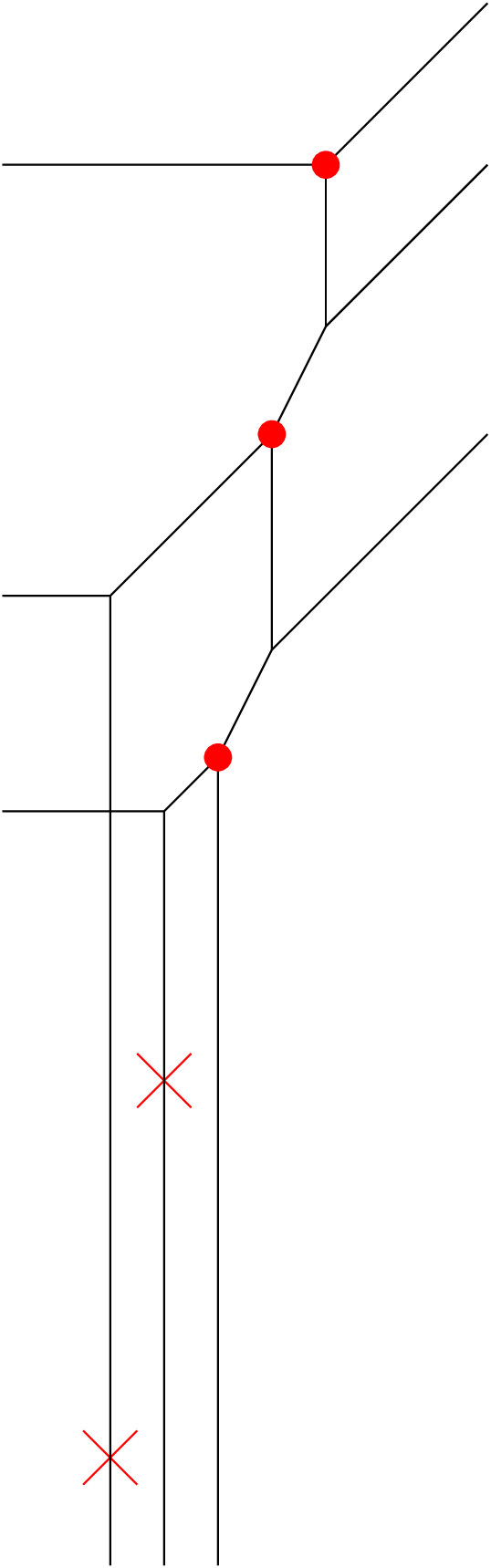}&
\includegraphics[height=6cm, angle=0]{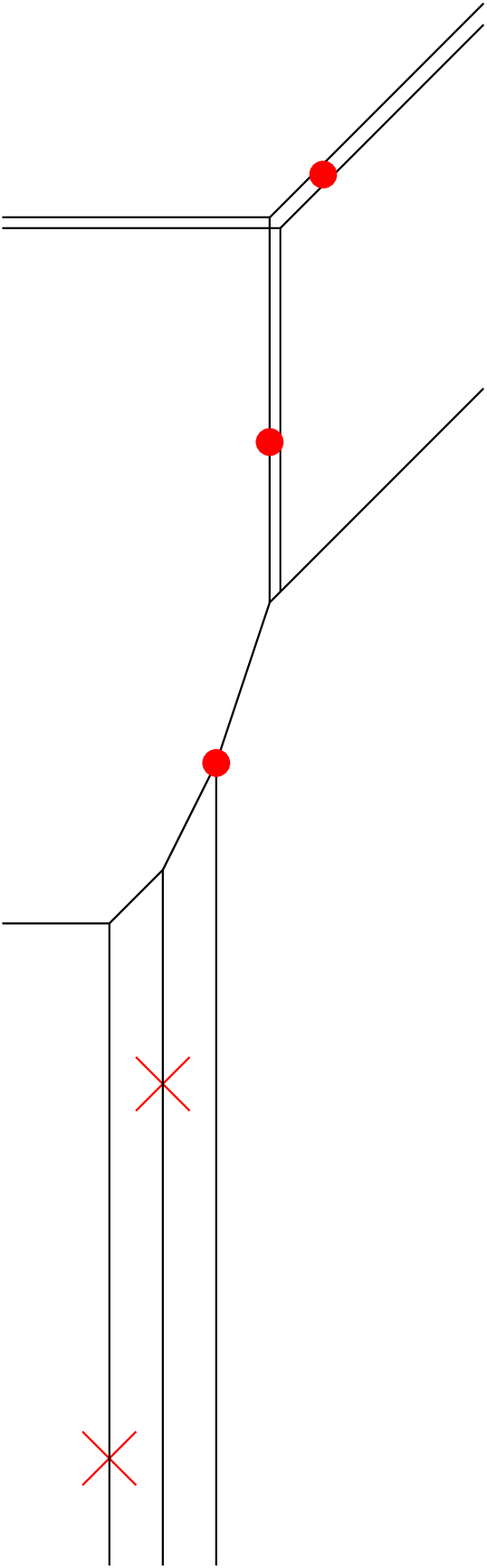}&
\includegraphics[height=6cm, angle=0]{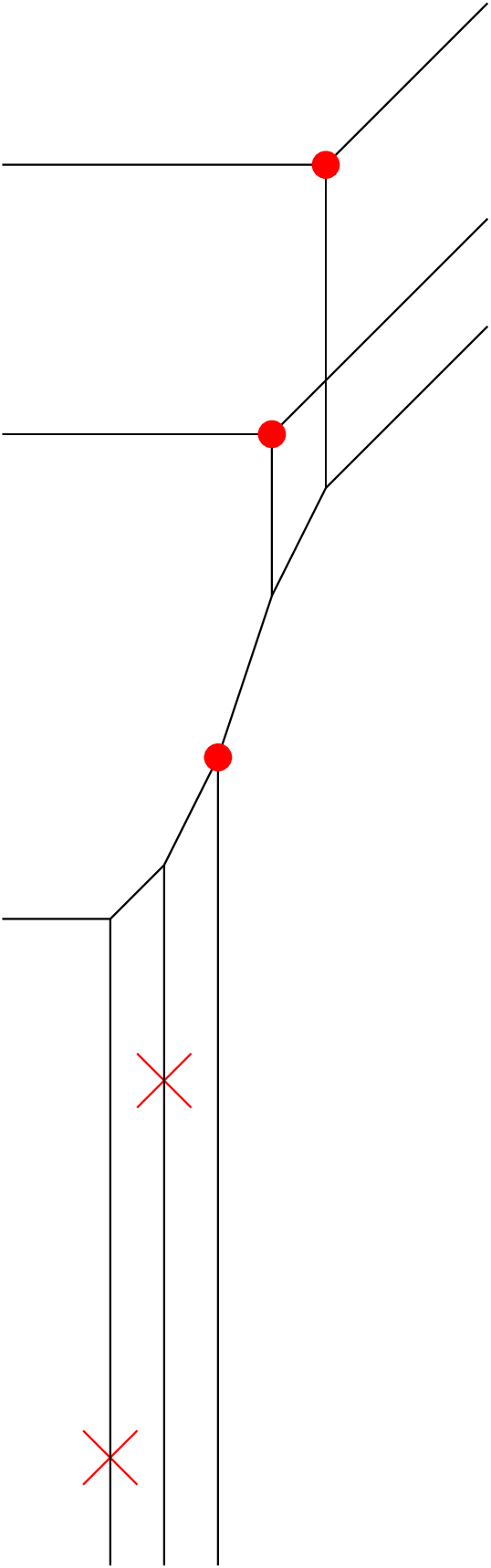}
\\
\\a) &&  b) $\mu^\RR_3=1$ & c)  $\mu^\RR_3=1$ & d) $\mu^\RR_3=1$&
 e) $\mu^\RR_3=-1$ & f)  $\mu^\RR_3=1$
\\ && & & & 2 choices
\end{tabular}
\caption{$W(\Delta_3,3)=2$}
\label{TCub}
\end{figure}

\section{Proof of Theorems \ref{NFD} and \ref{WFD}}\label{Floor trop}

Theorems \ref{NFD} and \ref{WFD} are obtained by applying Theorems
\ref{corr 1} and \ref{corr 2} to configurations $\omega$ which are
stretched  in
the direction $(0,1)$.

\subsection{Floors of a parameterized tropical curve}

As we fixed a preferred direction in $\RR^2$, it is natural to
distinguish between edges of parameterized tropical curves which are
mapped parallely to this direction from the others.

\begin{defi}
An elevator of a parameterized tropical curve $(C,f)$ is an edge $e$
of
$C$ with $u_{f,e}=\pm (0,1)$. The set of  elevators of $(C,f)$ is
denoted by $\E(f)$. If an elevator $e$ is not a leaf of $C$, then $e$
is said to be bounded.
A floor of a parameterized tropical curve $(C,f)$ is a connected
component of the topological closure of $C\setminus(\E (f)\cup
\text{End}(C))$.
\end{defi}

Naturally, a floor of a parameterized marked tropical curve is a floor
of the underlying parameterized tropical curve.

\begin{exa}
In Figure \ref{floor} are depicted some images of parameterized tropical
curves. Elevators are depicted in dotted lines.

\end{exa}

\begin{figure}[h]
\begin{center}
\begin{tabular}{ccccccc}
\includegraphics[height=2cm, angle=0]{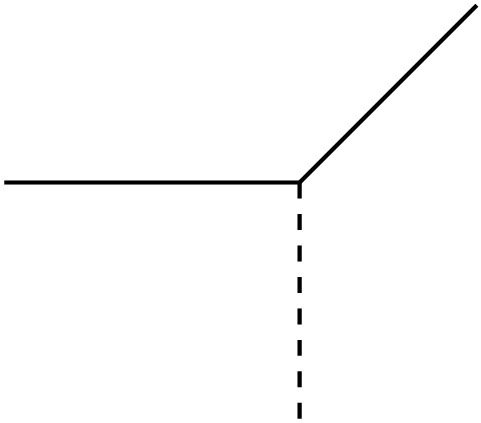}&
\hspace{3ex} &
\includegraphics[height=3cm, angle=0]{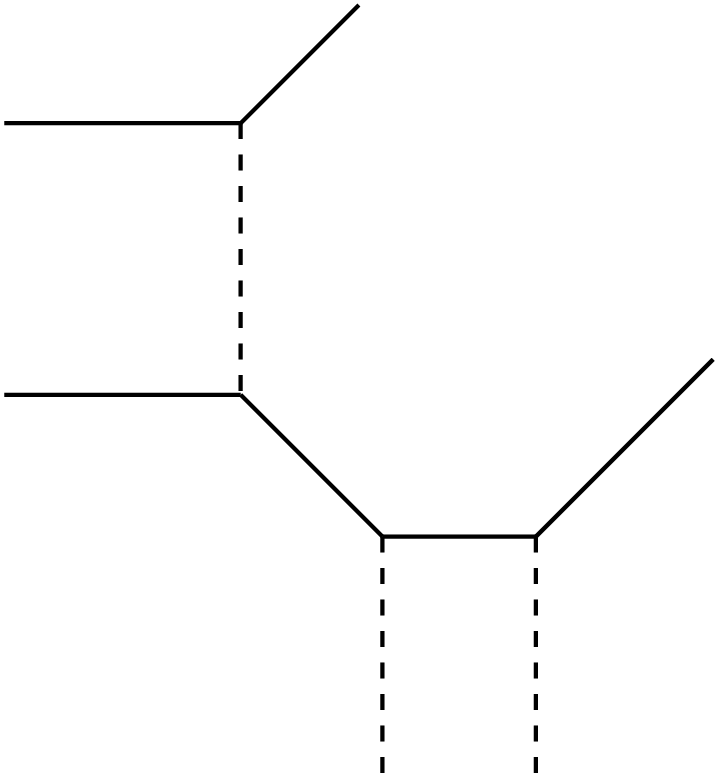}&
\hspace{3ex} &
\includegraphics[height=3cm, angle=0]{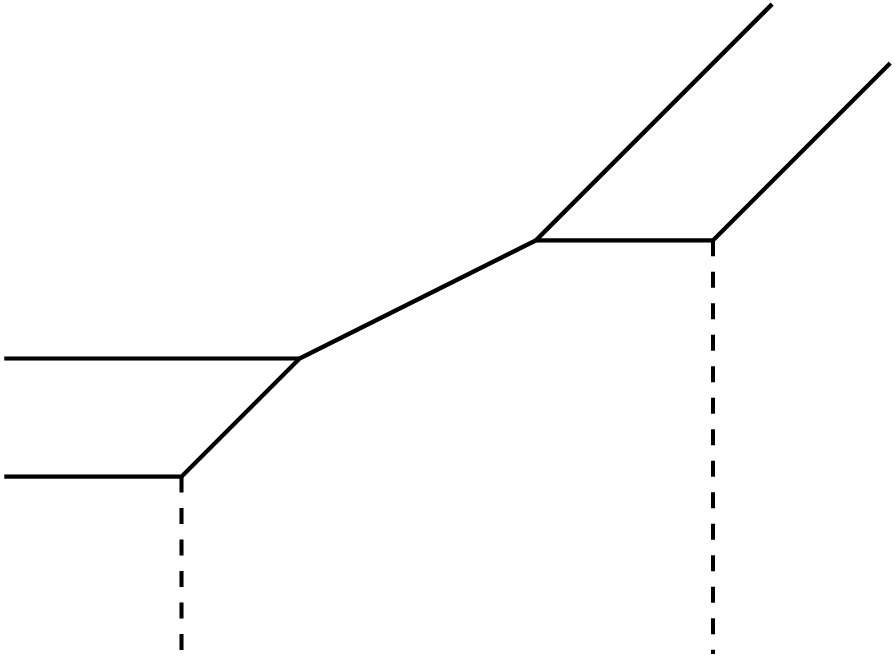}&
\hspace{3ex} &
\includegraphics[height=4cm, angle=0]{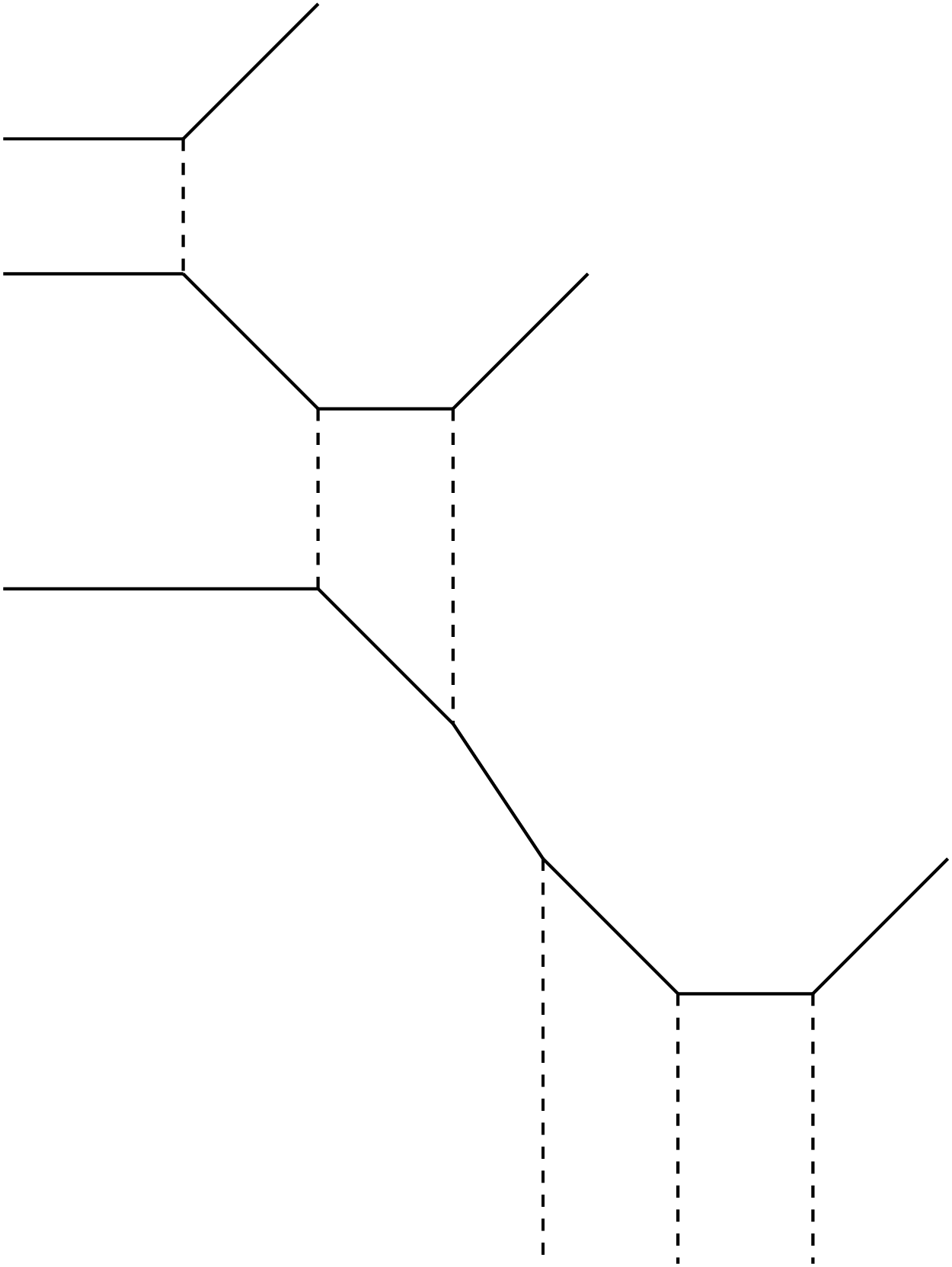}
\\
\\a) One floor && b) Two floors && c) One floor &&d) Three floors

\end{tabular}
\end{center}
\caption{Floors of tropical curves}
\label{floor}
\end{figure}

Let us fix a
$h$-transverse polygon $\Delta$, and a non-negative integer number $g$.
Define
$s=Card(\partial \Delta\cap \ZZ^2) -1 +g$, and choose a generic
configuration $\omega$  of $s$ points in
$\RR^2$.
If moreover $g=0$, choose
 $r$ an non-negative integer such that $s-2r\ge 0$,  and
choose
a collection $\omega_r$
of $s-r$
points in $\RR^2$.

\begin{prop}\label{strip}
Let $I=[a;b]$ be a bounded interval of $\RR$. Then, if $\omega$
(resp. $\omega_r$) is a subset
of $I\times\RR$, then any vertex of any curve in $\C(\omega)$
(resp. $\RR\C(\omega_r)$)  is mapped
 to  $I\times\RR$.
\end{prop}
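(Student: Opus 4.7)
I would argue by contradiction. Suppose $\widetilde C = (C, x_1, \ldots, x_s, f) \in \C(\omega)$ (or in $\RR\C(\omega_r)$) has a vertex $v$ with $x(f(v)) \notin [a,b]$; by symmetry assume $x(f(v)) > b$. Let $K$ be the connected component of $f^{-1}(\{x > b\})$ containing $v$. Since $\omega \subset [a,b] \times \RR$, no marked point lies in $K$. The plan is to produce a non-trivial one-parameter family of parameterized tropical curves $(C, x_1, \ldots, x_s, f_t) \in \C(\omega)$ with $f_t \equiv f$ on $C \setminus K$, which contradicts the finiteness of $\C(\omega)$ (resp.\ $\RR\C(\omega_r)$) supplied by Proposition~\ref{finite 1} (resp.\ Proposition~\ref{finite 2}).

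Such a family is built by infinitesimally displacing the internal vertices of $K$, subject to the requirement that every bounded interior edge and every boundary edge of $K$ preserve its fixed primitive slope (infinite leaves of $K$ impose no such constraint on the vertex positions). Write $V_K$, $E_K$, $B_K$, $L_K$ for the numbers of internal vertices, interior bounded edges, boundary edges, and infinite leaves of $K$ respectively. Proposition~\ref{finite 1} guarantees that each internal vertex is trivalent with no two adjacent edges parallel, and that all leaves have weight one. Euler's formula applied to $K$ together with trivalence then yields $E_K = V_K - 1 + b_1(K)$ and $B_K + L_K = V_K + 2 - 2b_1(K)$, and the slope-preservation requirements amount to $E_K + B_K$ linear equations on the $2V_K$ positional parameters. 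Hence the displacement space has dimension
\[
2V_K - E_K - B_K \;=\; b_1(K) + L_K - 1.
\]

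The main obstacle is to check that this dimension is at least one, i.e.\ that $b_1(K) + L_K \ge 2$. A vertex $v_0 \in K$ maximizing $x \circ f$ must, by the balancing condition combined with the non-alignment of its three adjacent edges, emit at least one edge with strictly positive $u_x$; maximality forces this edge to be an infinite leaf of $K$, giving $L_K \ge 1$. Running the same extremal argument for a second generic linear functional on $\RR^2$ then either produces a further independent outgoing leaf (so $L_K \ge 2$) or closes a cycle within $K$ (so $b_1(K) \ge 1$), yielding the inequality and hence the contradiction. The real case $\RR\C(\omega_r)$ is handled in exactly the same way, restricting to $\phi$-equivariant deformations and using Proposition~\ref{finite 2}; the additional $4$-valent junctions and paired imaginary components of $K$ impose symmetric constraints, and the equivariant analog of the above count still produces a positive-dimensional family, contradicting finiteness of $\RR\C(\omega_r)$.
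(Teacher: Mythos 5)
Your overall strategy---a global dimension count for deformations of $f$ supported on the component $K$ of $f^{-1}(\{x>b\})$, contradicting the finiteness supplied by Propositions \ref{finite 1} and \ref{finite 2}---is a genuinely different route from the paper's (the paper instead picks an extremal vertex and exhibits one explicit one-parameter deformation), and your bookkeeping is correct: $2V_K-E_K-B_K=b_1(K)+L_K-1$ does follow from trivalence and Euler's formula. But the proof has a genuine gap at the one step that carries all the weight, namely the inequality $b_1(K)+L_K\ge 2$. The argument you offer for it does not work: applying the extremal argument to a second linear functional $\lambda$ produces, at the $\lambda$-maximal vertex of $K$, an edge whose direction is $\lambda$-positive, but that edge may perfectly well be a \emph{boundary} edge of $K$ (one that crosses the line $x=b$ and leaves the region), in which case it is neither a new leaf of $K$ nor does it close a cycle. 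The dichotomy ``a further independent outgoing leaf or a cycle'' is therefore unjustified.

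More seriously, the inequality cannot be proved without the $h$-transversality of $\Delta$, a hypothesis your argument never invokes; indeed both the inequality and the Proposition itself fail for general $\Delta$. Take $\Delta$ with vertices $(0,0)$, $(1,0)$, $(0,2)$, and the genus-$0$ curve with two trivalent vertices $v_1,v_2$ joined by an edge of direction $(1,1)$, with weight-$1$ leaves of directions $(0,-1)$ and $(-1,0)$ at $v_1$, and $(-1,0)$ and $(2,1)$ at $v_2$. Placing $p_1=(0,0)$, $p_2=(-1,1)$, $p_3=(-0.9,100)$ on the first three of these leaves forces $v_1=(0,1)$ and $v_2=(99,100)$: the configuration lies in $[-1,\tfrac12]\times\RR$ while a vertex is mapped far outside the strip, and the corresponding $K$ has $V_K=1$, $b_1(K)=0$, $L_K=1$, $B_K=2$ and is rigid (the two non-parallel boundary lines pin $v_2$). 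What saves the $h$-transverse case is that every leaf pointing into $\{x>b\}$ then has direction $(1,\alpha)$ and weight $1$, so it contributes exactly $+1$ to the $x$-component of the balancing condition; summing the balancing condition over the vertices of $K$ gives $L_K\ge B_K$, and combined with $B_K+L_K=V_K+2-2b_1(K)$, $V_K\ge1$ and the extremal argument this yields $b_1(K)+L_K\ge2$. This is exactly the point where the paper uses $h$-transversality, locally at the extremal vertex. (A further, smaller gap: in the real case Proposition \ref{finite 2} allows $4$-valent junction vertices, so the trivalent Euler count must actually be redone equivariantly rather than asserted to go through.)
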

\begin{proof}
Suppose that there exists an element $(C, x_1, \ldots,
x_s, f)$ in $\C(\omega)$
or $\RR\C(\omega_r)$, and a vertex $v$ of $C$ such that
$f(v)=(x_v,y_v)$ with $x_v<a$. Choose  $v$ such that no vertex of $C$
is mapped by $f$ to the half-plane $\{(x,y)\ | \ x<x_v\}$.
Suppose that $v$ is a trivalent vertex
of $C$, and denote by $e_1$, $e_2$ and $e_3$  the three edges of $C$
adjacent to
$v$. For $1\le i\le 3$, choose the vector $u_{e_i}$ pointing away
from $v$ (see section \ref{defi
trop curve}). By assumption on $v$, this vertex is adjacent to a
leaf of $C$, for example $e_1$, and since $\Delta$ is $h$-transverse
we have $u_{f,e_1}=(-1,\alpha)$. Moreover, according to Propositions
\ref{finite 1}
and \ref{finite 2}, we have
 $w_{f,e_1}=1$. By the balancing condition, up to exchanging $e_2$
and $e_3$, we have $u_{f,e_2}=(-\beta,\gamma)$ with $\beta\ge 0$, and
$u_{f,e_3}=(\delta,\varepsilon)$ with $\delta>0$.   Moreover, as no
vertex of $C$ is mapped to the half-plane  $\{(x,y)\ | \ x<x_v\}$, the
edge $f(e_2)$ is a leaf of $C$ if $\beta> 0$.
Then, by translating
the vertex $f(v)$  (resp. and possibly $\phi(v)$) in the direction
$u_{f,e_3}$,
we construct a
1-parameter family of parameterized tropical curves in $\C(\omega)$
(resp. $\RR\C(\omega_r)$), as depicted in two examples in Figure \ref{1
  param}. This
contradicts  Propositions \ref{finite 1}
and \ref{finite 2}. If
 $v$ is a 4-valent vertex of $C$, then we construct analogously a
1-parameter family of parameterized tropical curves in
 $\RR\C(\omega_r)$.
Alternatively, the contradiction may be derived from 
\cite[Lemma 4.17]{Mik1}.
Hence, no vertex of $C$ is mapped by $f$ in the
half-plane $\{(x,y)\ | \ x<a\}$.

The case where there exists an element $(C, x_1, \ldots,
x_s, f)$ in $\C(\omega)$
or $\RR\C(\omega_r)$, and a vertex $v$ of $C$ such that
$f(v)=(x_v,y_v)$ with $x_v>b$ works analogously.
\end{proof}

\begin{figure}[h]
\begin{center}
\begin{tabular}{ccc}
\includegraphics[height=3cm, angle=0]{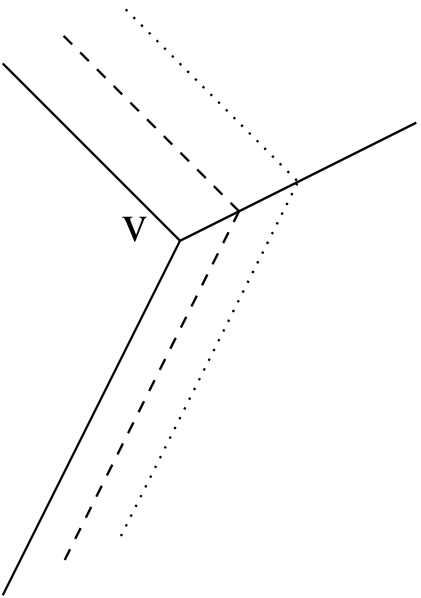}&
\hspace{5ex} &
\includegraphics[height=3cm, angle=0]{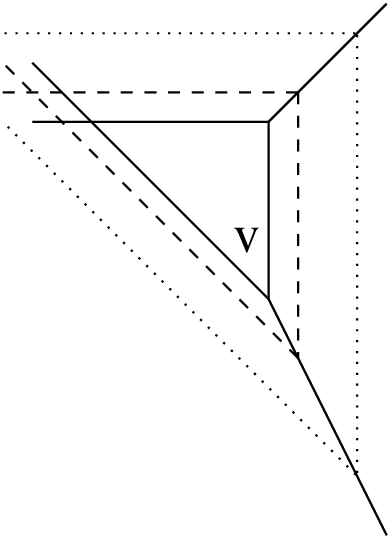}

\end{tabular}
\end{center}
\caption{1-parameter family of tropical curves}
\label{1 param}
\end{figure}

\begin{cor}\label{fd thm}
Let $I$ be a bounded interval of $\RR$. If $\omega$
(resp. $\omega_r$) is a subset
of $I\times\RR$ and if the points of $\omega$ (resp. $\omega_r$) are
far enough one from the others, then any floor of any curve in
$\C(\omega)$
(resp. $\RR\C(\omega_r)$)  can not contain more than one (resp. two)
distinct
marked point. If a floor of an element $\widetilde C$ in $\RR\C(\omega_r)$
contains two distinct marked points, then they are
contained in $\Im(\widetilde C)$.
\end{cor}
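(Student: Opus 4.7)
The plan is to combine Proposition \ref{strip} with two finiteness facts, each depending only on $\Delta$ (and $g$ or $r$): the set of primitive direction vectors $u_{f,e}$ that can occur on edges of a tropical curve with Newton polygon $\Delta$ is finite (these vectors are orthogonal to edges of some subdivision of $\Delta$), and the number of edges of any curve in $\C(\omega)$ or $\RR\C(\omega_r)$ is uniformly bounded. Using these facts, I will produce a quantitative constant $K = K(\Delta, g, |I|)$ such that, for any floor $F$ of any such curve, the portion $f(F) \cap (I \times \RR)$ has vertical extent at most $K$. Taking the points of $\omega$ (resp.\ $\omega_r$) pairwise vertically separated by more than $K$ will then force the statement.

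To obtain the bound $K$, I would analyze the edges of $F$ individually. Edges of $F$ are non-elevator by definition, so their primitive direction vectors $(\alpha,\beta)$ satisfy $\alpha \neq 0$, yielding a uniform slope bound $|\beta|/|\alpha| \le M_\Delta$ for some $M_\Delta$ depending only on $\Delta$. Each compact edge of $F$ has both endpoints in $I \times \RR$ by Proposition \ref{strip}, so its image is a segment of vertical extent at most $M_\Delta |I|$. Each non-compact leaf of $F$ has direction $(\pm 1, \alpha')$ with $|\alpha'| \le M_\Delta$ by $h$-transversality and Propositions \ref{finite 1}, \ref{finite 2}; its base vertex lies in $I \times \RR$, so the portion of the leaf's image intersecting $I \times \RR$ has vertical extent at most $M_\Delta |I|$. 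Summing these contributions over the bounded number of edges in $F$ and using connectivity gives the desired bound $K$.

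With $\omega$ (resp.\ $\omega_r$) chosen so that any two points have vertical distance exceeding $K$, no floor can contain two marked points with distinct $f$-images. In the complex case the images $p_1, \ldots, p_s$ are pairwise distinct, so every floor contains at most one marked point. In the real case, if a floor contains two distinct marked points $x_i, x_j$, they must satisfy $f(x_i) = f(x_j)$, which by the definition of $\RR\C(\omega_r)$ forces $\{i,j\}$ to be a conjugate pair $\{s-2r+2k-1, s-2r+2k\}$ for some $k$; hence there are at most two such marked points. Finally, since $\phi$ exchanges $x_i$ and $x_j$ but they are distinct, neither point is fixed by $\phi$, so both lie in $\Im(\widetilde C) = C \setminus \text{Fix}(\phi)$. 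The main technical obstacle is to establish the boundedness of slopes and edge-count independently of $\omega$: the slope bound requires noting that edge directions come from a finite set determined by $\Delta$, while the treatment of leaves carrying marked points crucially exploits the $h$-transversality of $\Delta$, which is exactly what gives horizontal components $\pm 1$ for non-elevator leaves and thus confines the in-strip portion of such leaves to a bounded vertical window.
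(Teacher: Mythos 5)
Your proof is correct and follows essentially the same route as the paper's: both rest on Proposition \ref{strip} together with a uniform bound (depending only on $\Delta$ and $g$) on the number of edges and on the coordinates of the vectors $w_{f,e}u_{f,e}$, yielding a constant bounding the extent of the image of any floor inside the strip $I\times\RR$, after which spreading the points of $\omega$ (resp.\ $\omega_r$) farther apart than this constant gives the conclusion. The only differences are cosmetic: the paper bounds the Euclidean length of paths in $C\setminus\E(C)$ where you bound the vertical extent of the in-strip portion of a floor, and you spell out the endgame for the real case (conjugate pairs and the $\Im(\widetilde C)$ claim) that the paper leaves implicit.
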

\begin{proof}
Let $(C, x_1, \ldots,
x_s, f)$ be an element of $\C(\omega)$
or $\RR\C(\omega_r)$ and choose a path $\gamma$ in $C\setminus
\mathcal E( C)$. The number of edges of $C$ is bounded
from above by a number which depends only on $\Delta$ and $g$, and
according to the tropical Bézout Theorem, absolute value of the
coordinates of the vector  $w_{f,e}u_{f,e}$ for any
edge $e$ of
$C$ is bounded from above by a number which depends only on $\Delta$.
According to Proposition \ref{strip}, all
vertices of $C$ are mapped by $f$ to the strip $I\times \RR$, so the
length (for the Euclidean metric in $\RR^2$) of $f(\gamma)$ is bounded
from above by a number $l_{max}(\Delta,g)$ which depends only on $\Delta$ and $g$. Hence,
if the distance between the points $p_i$ is greater than
$l_{max}(\Delta,g)$, two distinct marked points $x_i$ which are not mapped
to the same $p_j$ cannot be on the same
floor of $C$.
\end{proof}

For the remaining of this section, let us  fix a bounded interval $I$ of
$\RR$, a configuration $\omega=\{p_1,\ldots, p_s\}$, or possibly a
configuration
$\omega_r=\{p_1,\ldots, p_{s-r}\}$, such that the point $p_i$ is very
much
higher than the points $p_j$ if $j<i$. Here, \textit{very much higher}
means that we can apply Corollary \ref{fd thm}. Actually, we prove in
next corollary that any floor of any curve in $\C(\omega)$ or
$\RR\C(\omega_r)$ contains \textit{exactly} one marked point. More
precisely, we have the following statement.

\begin{cor}\label{all marked}
Let $\widetilde C$ be an element of  $\C(\omega)$ or
$\RR\C(\omega_r)$. Then, any floor of $\widetilde C$ contains exactly
one marked point. Moreover, the curve $\widetilde C$
 has exactly
$Card(d_l(\Delta))$ floors and $Card(d_l(\Delta))+g+d_-(\Delta)+
d_+(\Delta)-1$ elevators.
\end{cor}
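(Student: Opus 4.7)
The strategy is to derive $F$ (the number of floors) and $E_b + E_u$ (the number of elevators) by playing off three estimates: (i) a polytope-stacking upper bound $F \leq Card(d_l(\Delta))$; (ii) an Euler-characteristic identity $E_b = g + F - 1 - \sum_i g_i$ (with $g_i = b_1(F_i)$); and (iii) a marked-point upper bound $s \leq F + E_b + E_u$ coming from Corollary \ref{fd thm} together with genericity. Combined, these will collapse to equalities, forcing $F = Card(d_l(\Delta))$, every $g_i = 0$, and exactly one marked point on each floor and on each elevator; the elevator count then drops out of~(ii).

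For (i), each floor $F_i$, with an additional leaf in direction $(0, \pm 1)$ reinstated at every attachment point of a removed elevator, is itself a parameterized tropical curve with $h$-transverse Newton polygon $\Delta_i$, and the polygons $\Delta_i$ stack vertically to reconstruct $\Delta$. This gives $\sum_i Card(d_l(\Delta_i)) = Card(d_l(\Delta))$; Propositions \ref{finite 1} and \ref{finite 2} forbid a multivalent vertex of $C$ from being mapped to a segment, so each $\Delta_i$ is non-degenerate and $Card(d_l(\Delta_i)) \geq 1$. For (ii), contracting each floor of $C$ to a single vertex produces a connected graph $G$ with $F + Card(\partial\Delta\cap\ZZ^2)$ vertices and $E_b + Card(\partial\Delta\cap\ZZ^2)$ edges, so $b_1(G) = E_b - F + 1$; since $b_1(C) = g = \sum_i g_i + b_1(G)$, the identity follows.

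For (iii), Corollary \ref{fd thm} gives each floor at most one marked point in the complex case; in the real case it gives at most two, but only if both lie in $\Im(\widetilde C)$, and the stretching argument from the proof of Corollary \ref{fd thm} can be reused to rule out the two-point situation for a sufficiently stretched $\omega_r$. Genericity of $\omega$ (resp.\ $\omega_r$) ensures that no two $p_j$'s share an $x$-coordinate; since each elevator has vertical image, this forces at most one marked point per elevator. Substituting the Euler identity and using $2\,Card(d_l(\Delta)) + d_-(\Delta) + d_+(\Delta) = Card(\partial\Delta\cap\ZZ^2)$ together with $s = Card(\partial\Delta\cap\ZZ^2) + g - 1$, we obtain
\[
F + E_b + E_u = s + 2\bigl(F - Card(d_l(\Delta))\bigr) - \textstyle\sum_i g_i \leq s,
\]
and combined with the opposite inequality from (iii) the circle closes, yielding $F = Card(d_l(\Delta))$, $g_i = 0$ for every $i$, one marked point per floor and per elevator, and $E_b + E_u = Card(d_l(\Delta)) + g + d_-(\Delta) + d_+(\Delta) - 1$.

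The main obstacle is the real-case refinement in (iii): Corollary \ref{fd thm} a priori permits two distinct marked points on a single floor, which would correspond to a conjugate pair sitting on one mixed floor (having both real and imaginary parts joined at $4$-valent junction vertices). Excluding this scenario requires rerunning the bounded-vertical-extent argument of Corollary \ref{fd thm} at the level of the imaginary part of such a floor, or invoking the dimension count of Proposition \ref{finite 2}, before the chain of inequalities can be closed.
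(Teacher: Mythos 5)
Your proposal is, in substance, the same bookkeeping argument as the paper's proof of Corollary~\ref{all marked}: your (i) is a version of its inequality~(\ref{e4}), your (ii) packages its relations~(\ref{e2}) and~(\ref{e3}), and your (iii) is its equation~(\ref{e1}) plus the one-point-per-cell bounds. But there is one genuine gap, which you have correctly flagged yourself and not closed: in the real case you must deal with a floor carrying two distinct marked points, i.e.\ a conjugate pair $x_{s-2r+2i-1}\ne x_{s-2r+2i}$ lying on one floor inside $\Im(\widetilde C)$ (Corollary~\ref{fd thm} explicitly permits this, since the two points map to the \emph{same} $p_j$ and the stretching argument only separates points with distinct images). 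This is not cosmetic. Your stacking bound (i) only gives $Card(d_l(\Delta_i))\ge 1$ per floor, so if $f_2$ denotes the number of floors with two marked points, your chain of inequalities only yields $f_2\ge 2(Card(d_l(\Delta))-F)+f_0+\sum_i g_i\ge 0$, which is perfectly consistent with, say, $f_2=1$, $F=Card(d_l(\Delta))$, and one elevator carrying no marked point. The argument therefore does not close without an extra input, and neither of the two fixes you mention (re-running the bounded-extent argument on the imaginary part, or the dimension count of Proposition~\ref{finite 2}) is actually carried out.

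The paper closes this not by excluding such floors a priori but by strengthening your (i): by the tropical B\'ezout theorem, a floor containing two distinct marked points meets a generic tropical line with multiplicity at least $2$, hence contributes at least $2$ to $Card(d_l(\Delta))$; this gives the weighted bound $f_0+f_1+2f_2\le Card(d_l(\Delta))$ of~(\ref{e4}), and with that refinement your own computation forces $f_0=f_2=0$ and $\sum_i g_i=0$ exactly as you intend. So the repair is local: replace ``each $\Delta_i$ is non-degenerate, hence $Card(d_l(\Delta_i))\ge 1$'' by ``and at least $2$ if the floor carries two marked points.'' Everything else in your proposal --- the count $E_u=d_-(\Delta)+d_+(\Delta)$ of unbounded elevators from Propositions~\ref{finite 1} and~\ref{finite 2}, the genericity/verticality argument giving at most one marked point per elevator, the Euler identity, and the final elevator count --- is correct and matches the paper.
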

\begin{proof}
Let us denote by $f_i$ (resp $b_i$, $\widetilde d_i$) the number of floors
(resp. bounded elevators, elevators) of $\widetilde C$
containing $i$ marked points. According to Corollary \ref{fd thm},
$f_i=0$ as soon as $i\ge 3$, and since the points $p_i$ are in general
position, we have $b_i=\widetilde d_i=0$ as soon as $i\ge 2$. What we
have to prove is that $f_0=f_2=b_0=\widetilde d_0=0$.
We have two
expressions for the number $s$ which gives us the equation
\begin{equation}\label{e1}
f_1+2f_2+\widetilde d_1 = d_+(\Delta)+d_-(\Delta)+ 2Card(d_l(\Delta)) -1 +g
\end{equation}
According to tropical Bézout Theorem and Corollary \ref{fd thm}, if a
floor of $C$ contains two marked points, then the intersection number of this
floor with a generic tropical line is at least 2. Hence we have

\begin{equation}\label{e4}
f_0 + f_1 + 2f_2\le Card(d_l(\Delta))
\end{equation}

According to Propositions \ref{finite 1} and \ref{finite 2},  we have
$d_++d_-$ leaves of $C$ which are elevators, thus
\begin{equation}\label{e3}
b_0+b_1=\widetilde d_0+\widetilde d_1 -(d_+(\Delta)+d_-(\Delta))
\end{equation}

An Euler characteristic computation shows us that
\begin{equation}\label{e2}
f_0+f_1 +f_2 - b_0 -b_1 \ge 1-g
\end{equation}

Combining Equations (\ref{e1}) with (\ref{e4}), then with Equation
(\ref{e3}), and finally with Equation
(\ref{e2}), we obtain

$$f_1 + f_2\ge Card(d_l(\Delta))$$
which is compatible with Equation (\ref{e4}) if an only if
$f_0=f_2=0$. Moreover, in this case
 inequalities
(\ref{e2}) and (\ref{e4}) are actually equalities, which  implies
 $b_0=\widetilde d_0=0$.
\end{proof}

\subsection{From tropical curves to floor diagrams}

To a parameterized tropical curve $(C,f)$, we  associate the following
oriented weighted graph, denoted by $\F(C,f)$~:  vertices of $\F(C,f)$
correspond to
floors of
$(C,f)$, and edges of $\F(C,f)$ correspond to elevators of
$(C,f)$. Edges of
$\F(C,f)$ inherit a natural weight from weight of $(C,f)$. Moreover,
$\RR$ is naturally oriented, and edges of
$\F(C,f)$ inherit this orientation, since they are all
parallel to the
coordinate axis
$\{0\}\times\RR$.
Note that we do \textit{not}  consider the graph  $\F(C,f)$ as  a
metric graph and that
some leaves are non-compact.

\begin{exa}
The graphs corresponding to parameterized tropical curves depicted in
 Figure \ref{floor} are depicted in Figure \ref{floor2}. Floors are
 depicted by ellipses, and elevators by segments. As all elevators have
 weight 1, we do not precise them on the picture. Orientation is
 implicitly from down to up.
\end{exa}
\begin{figure}[h]
\begin{center}
\begin{tabular}{ccccccc}
\includegraphics[height=1.3cm, angle=0]{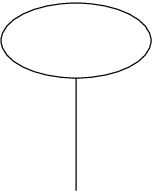}&
\hspace{3ex} &
\includegraphics[height=2cm, angle=0]{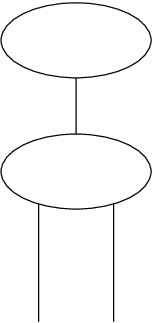}&
\hspace{3ex} &
\includegraphics[height=1.3cm, angle=0]{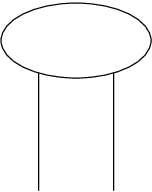}&
\hspace{3ex} &
\includegraphics[height=3cm, angle=0]{Figures/FD30.eps}
\\
\\a)  && b)  && c)  &&d)

\end{tabular}
\end{center}
\caption{Graphs associated to tropical curves}
\label{floor2}
\end{figure}

Let $\widetilde C$ be a parameterized tropical curve in $\C(\omega)$ or
$\RR\C(\omega_r)$. Since $\widetilde C$ has exactly
$Card(d_l(\Delta))$ floors, any floor $ \varepsilon$ of $\widetilde C$ has a
unique leaf $e$ with $u_{f,e}=(-1,-\alpha)$ where $u_{f,e}$ points to
infinity. Hence   the following map is well defined
$$\begin{array}{cccc}
\theta : & \text{Vert}(\F(\widetilde C)) &\longrightarrow & \ZZ
\\ & \varepsilon &\longmapsto & \alpha
\end{array} $$
The following lemma follows directly from Corollary \ref{all marked}
and Definition \ref{def fd}
of a floor
diagram.

\begin{lemma}
The graph $\F(\widetilde C)$ equipped with the map $\theta$ is a floor
diagram
of genus $g$ and Newton polygon $\Delta$.
\end{lemma}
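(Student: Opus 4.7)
The strategy is to verify the five axioms of Definition \ref{def fd} in turn, leaning on the rigidity information already established in Corollary \ref{all marked}. Connectedness of $\F(\widetilde C)$ is immediate from connectedness of $C$, since any path in $C$ projects to a walk in $\F(\widetilde C)$. Acyclicity follows from the orientation convention: every elevator is parallel to $(0,\pm 1)$ and is oriented upward, so the $y$-coordinate is strictly increasing along any oriented path, ruling out oriented cycles.

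For the first Betti number, I would compute $\chi(\F(\widetilde C))$ directly from the counts provided by Corollary \ref{all marked}. Writing $N=Card(d_l(\Delta))$, the graph $\F(\widetilde C)$ has $N$ floors (vertices) and, after accounting separately for $N+g-1$ bounded elevators and $d_-(\Delta)+d_+(\Delta)$ non-bounded ones, a total of $N+g-1+d_-(\Delta)+d_+(\Delta)$ edges. Compactifying each non-compact leaf by a $1$-valent vertex yields $\chi=1-g$, hence $b_1(\F(\widetilde C))=g$. (Equivalently, one can compare Euler characteristics of $C$ and its floor contraction; the equality case of \eqref{e2} in the proof of Corollary \ref{all marked} forces each floor to be a tree, so no cycles of $C$ are absorbed by the contraction.)

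The count of $\text{Edge}^{\pm\infty}(\F(\widetilde C))$ is transparent: the non-compact leaves of $\F(\widetilde C)$ are precisely the non-bounded elevators of $C$, which are leaves of $C$ of direction $(0,\pm 1)$ and, by Propositions \ref{finite 1} and \ref{finite 2}, are of weight $1$. Under the duality between leaf directions of $(C,f)$ and edges of $\Delta(C,f)=\Delta$, the $(0,1)$-leaves correspond to the top horizontal edge (total integer length $d_+(\Delta)$) and the $(0,-1)$-leaves to the bottom one ($d_-(\Delta)$). That $\{\theta(v)\}$ coincides with $d_l(\Delta)$ is built into the definition of $\theta$: each of the $N$ floors contributes a unique left leaf $(-1,-\alpha)$, and these leaves globally exhaust the left edges of $\Delta$, which is exactly what $d_l(\Delta)$ records.

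The substantive step, which I expect to be the main obstacle, is identifying $\{\theta(v)+\text{div}(v)\}$ with $d_r(\Delta)$. Fixing a floor $F$ with $v=[F]\in\text{Vert}(\F)$ and $\theta(v)=\alpha$, I would sum the balancing condition over all interior vertices of $F$: internal edges cancel in pairs, leaving the sum of $w_{f,e}u_{f,e}$ over the boundary edges of $F$, which consist of the leaves of $C$ inside $F$ and the elevators adjacent to $F$, each vector taken to point outward from $F$. The horizontal component of this sum receives contributions only from non-elevator leaves; since the single left leaf contributes $-1$, balance forces $F$ to have exactly one right leaf, in some direction $(1,\beta)$ of weight $1$. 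The vertical component then reads $-\alpha+\beta+W_{\text{out}}-W_{\text{in}}=0$, where $W_{\text{out}}$ and $W_{\text{in}}$ are the total weights of elevators pointing upward out of and into $F$ respectively; by the definition of divergence this is precisely $-\alpha+\beta-\text{div}(v)=0$, so $\beta=\theta(v)+\text{div}(v)$. Collecting the $\beta$'s over all floors reproduces $d_r(\Delta)$ exactly, finishing the verification.
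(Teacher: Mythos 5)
Your verification is correct, and it is essentially the proof the paper intends: the paper disposes of this lemma with the single sentence that it ``follows directly from Corollary \ref{all marked} and Definition \ref{def fd}'', and what you have written is exactly the check of the five axioms that this sentence delegates to the reader. In particular your treatment of the only substantive axiom --- summing the balancing condition over a floor, noting that interior edges cancel, that elevators contribute nothing horizontally so the unique left leaf forces a unique right leaf of direction $(1,\beta)$, and that the vertical component gives $\beta=\theta(v)+\operatorname{div}(v)$ --- is the computation implicit in the paper's definition of $\theta$ and its matching of leaf directions with outward normals of $\Delta$.

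The one place where your written argument does not quite hold up as stated is acyclicity. A vertex of $\F(\widetilde C)$ is an entire floor, on which the $y$-coordinate of $f$ is not monotone, so ``the $y$-coordinate is strictly increasing along any oriented path'' is not literally true: a priori one floor could receive an elevator from below a second floor while sending another elevator up into it from a lower point, producing an oriented $2$-cycle. The repair uses the same stretching you invoke elsewhere: by Corollary \ref{fd thm} the image of each floor has diameter at most $l_{max}(\Delta,g)$, each floor contains exactly one marked point (Corollary \ref{all marked}), and consecutive points of the configuration are separated by much more than $l_{max}(\Delta,g)$; hence every elevator between two floors is oriented from the floor whose marked point is lower to the floor whose marked point is higher, and this total preorder on $\text{Vert}(\F(\widetilde C))$ excludes oriented cycles. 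With that adjustment the proof is complete.
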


Let us denote by $\D(\widetilde C)$ this floor diagram. Finally we
associate  to
 a parameterized tropical curve with $n$
marked points
 $\widetilde C=(C,x_1,\ldots,x_n,f)$ in $\C(\omega)$ or
$\RR\C(\omega_r)$ a marking $m$ of the floor
diagram $\D(\widetilde C)$. The
natural idea is to map the points $i$ to the floor or elevator of
$C$ containing $x_i$. However, it can happen if $\widetilde C$ is in
$\RR\C(\omega_r)$ that $x_i=x_{i+1}$ is a vertex $v$ of $C$. In this case,
according to Proposition \ref{finite 2} and Corollary \ref{all
  marked}, $v$ is on a floor $\varepsilon$  and is adjacent to an
elevator $e$ of $\D(\widetilde C)$. If $u_{f,e}=(0,1)$ points away from $v$
(resp. to $v$),
then  we define $m(i)=\varepsilon$ and $m(i+1)\in e$
(resp. $m(i+1)=\varepsilon$ and $m(i)\in e$). If $x_{i}$ is not a
vertex of $C$, then we define $m(i)$ as the floor or a point on the
edge of $C$ which contains $x_i$.

The map $m: \{1,\ldots , Card(\partial\Delta\cap \ZZ) -1+g\} \to
\D(\widetilde C)$ is
clearly an increasing
map, hence  it is a marking of the floor diagram $\D(\widetilde C)$.
In other
words, we have a map $\Phi : \widetilde C \mapsto (\D(\widetilde
C),m) $ from the set $\C(\omega)$ (resp.
$\RR\C(\omega_r)$) to the set of marked floor diagrams (resp. $r$-real
marked floor diagrams with non-null $r$-real multiplicity) of genus $g$
and Newton polygon $\Delta$.

\begin{exa}
All marked floor diagrams with a non-null complex multiplicity
(resp. $3$-real multiplicity)
 in Table \ref{cubic} correspond exactly to parameterized
tropical curves
whose image in $\RR^2$ are depicted in Figure \ref{Tcub0} (resp. \ref{TCub}).
\end{exa}

Theorems \ref{NFD} and \ref{WFD} are now a corollary of the next
proposition.
\begin{prop}\label{fd bij}
The map $\Phi$ is a bijection. Moreover, for any
element $\widetilde C$ in $\C(\omega)$ (resp.  $\RR\C(\omega_r)$), one has
$\mu^\CC(\widetilde C)=\mu^\CC(\Phi(\widetilde C))$ (resp.
$\mu^\RR_r(\widetilde C)=\mu^\RR_r(\Phi(\widetilde C))$).
\end{prop}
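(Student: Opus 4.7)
My plan is to construct an explicit inverse to $\Phi$ and then to verify that both multiplicity formulas are preserved under it. The key inputs are the rigidity afforded by the fact that $p_i$ is much higher than $p_j$ for $j<i$, together with Corollary~\ref{all marked}, which ensures that each floor of $\widetilde C$ carries exactly one marked point.

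For bijectivity, I would build $\Phi^{-1}$ by processing the vertices and edges of $(\D,m)$ in the partial order induced by $m$. Each vertex $v$ of $\D$ yields a single floor whose vertical position is forced by the height of the marked point assigned to $v$ by $m^{-1}$. The non-elevator part of that floor is a tree whose two outer leaves point in the directions $(-1,-\theta(v))$ and $(1,\theta(v)+\text{div}(v))$, and whose interior trivalent vertices are the attachment points for the elevators at $v$ with prescribed weights; once one marked point on the floor is fixed, the floor is rigidly determined as a rational tropical curve with prescribed leaves. Each edge of $\D$ becomes a vertical elevator of weight $w(e)$ joining the two corresponding floors at horizontal positions determined by the surrounding marked points (or, if the elevator itself carries a marked point, by that point). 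The vertical ordering forces the direction of each elevator, making the reconstruction unique, and the result lies in $\C(\omega)$ (resp.\ $\RR\C(\omega_r)$) by construction.

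For the complex multiplicity, each floor is a tree by Corollary~\ref{all marked}, and at each interior trivalent vertex $v$ the adjacent non-elevator edges have weight $1$, by Propositions~\ref{finite 1} and \ref{finite 2} combined with $h$-transversality. Writing the balancing condition at such a vertex adjacent to an elevator of weight $w$, one computes directly that $\mu^\CC(v,f)=w$. Taking the product over all vertices of $\widetilde C$, each bounded elevator is counted twice (once at each endpoint) and each elevator-leaf (of weight $1$ by Definition~\ref{def fd}) is counted once, so
\[
\mu^\CC(\widetilde C) \;=\; \prod_{e\in \text{Edge}(\D)} w(e)^2 \;=\; \mu^\CC(\Phi(\widetilde C)).
\]

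The real case is more delicate. I would begin by observing that the involution $\phi$ on $C$ can be reconstructed from the $r$-real marked floor diagram: an $r$-pair $\{i,i+1\}$ with $m(i)$ not adjacent to $m(i+1)$ corresponds precisely to a pair of floors or elevators of $\widetilde C$ exchanged by $\phi$, so these are exactly the cells whose preimages form $\Im(\widetilde C)/\phi$, the remainder forming $\Re(\widetilde C)$. The vanishing criterion of Definition~\ref{defi real} for even-weight edges mirrors the requirement in Proposition~\ref{finite 2} that every edge of $\Re(\widetilde C)$ carry odd weight. The product $\prod_{e\in A} w(e)$ then arises from the vertex contributions on $\Im(\widetilde C)/\phi$ via the same computation as in the complex case, but divided by two since each bounded elevator in $\Im(\widetilde C)/\phi$ meets a single floor in the quotient. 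The main obstacle is matching the signs: showing $o^\RR_r+o^\CC_r\equiv o_r\pmod 2$ requires a careful parity analysis at each $4$-valent junction vertex and across elevators lying in $\Re(\widetilde C)$, relating the parity of the divergence of a floor to the parities of its incident elevator weights, and will be the technical core of the proof.
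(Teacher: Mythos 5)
Your treatment of bijectivity and of the complex multiplicity is essentially the paper's own argument: the paper likewise observes that a curve in $\C(\omega)$ is obtained by gluing, along elevators, single-floor curves that are rigidly determined by the point they pass through, and computes $\mu^\CC(v,f)=w$ at each vertex because the floor edge adjacent to $v$ has weight $1$ and the elevator is vertical; the double count of bounded elevators then gives $\prod w(e)^2$. Those parts are correct and complete.

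The real case, however, contains a genuine gap: you correctly identify that the whole difficulty is the congruence $o^\RR_r+o^\CC_r\equiv o_r\pmod 2$, but you then declare this "the technical core of the proof'' and stop, so the sign comparison is never actually established. The paper closes this in a few lines, and the argument is short enough that omitting it leaves the proposition unproved. The key point is that $o^\RR_r$ is always \emph{even}, so it contributes no sign at all. Indeed, a $4$-valent (junction) vertex $v$ adjacent to $e\in\Im(\widetilde C)$ has $\mu^\RR(v,f)=w_{f,e}$ exactly --- the real edge at $v$ lying on a floor has weight $1$ and the determinant with the vertical elevator direction is $1$ --- so the defining condition $\mu^\RR(v,f)\equiv w_{f,e}+1\pmod 2$ is never satisfied and $4$-valent vertices never contribute to $o^\RR_r$. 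For a $3$-valent vertex $v$ of $\Re(\widetilde C)$ adjacent to an elevator $e$ one has $\mu^\CC(v,f)=w_{f,e}$; if this is $\equiv 3\pmod 4$ then $w_{f,e}\ge 3$, so $e$ is bounded (leaves have weight $1$) and lies in $\Re(\widetilde C)$, and its other endpoint satisfies the same congruence. Such vertices therefore pair off across bounded elevators of $\Re(\widetilde C)$, giving $o^\RR_r\equiv 0\pmod 2$, after which the remaining factor $(-1)^{o^\CC_r}$ is matched with $(-1)^{o_r}$ through the identification of odd-$\mu^\CC$ vertices of $\Im(\widetilde C)/\phi$ with the $\phi$-pairs of odd-divergence floors counted (with the factor $\tfrac12$) in $o_r$. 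Without some version of this parity bookkeeping your proof does not establish $\mu^\RR_r(\widetilde C)=\mu^\RR_r(\Phi(\widetilde C))$, only the equality of absolute values.
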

\begin{proof}
The fact that $\Phi$ is a bijection is
clear when $Card(d_l(\Delta))=1$. Hence the map $\Phi$ is always a bijection since
  an element of $\C(\omega)$ (resp.  $\RR\C(\omega_r)$) is
obtained by gluing, along elevators, tropical curves with a single
floor which are uniquely determined by the points $p_i$ they pass
through.

Let $\widetilde C=(C,x_1,\ldots, x_s,f)$ be an element of
$\C(\omega)$, and $v$ a vertex of
$C$. According to Corollary \ref{fd thm} and Corollary \ref{all marked}, $v$ is adjacent to an
elevator of weight $w$ and to an edge $e$ on a floor with
$u_{f,e}=(\pm 1,\alpha)$ and $w_{f,e}=1$. Hence,
$\mu^\CC(v,f)=w$. Since any leaf of $C$
 is of weight 1, it follows that $\mu^\CC(\widetilde C)$ is the product
of the square of the multiplicity of all elevators of $C$, that is
equal to $\mu^\CC(\Phi(\widetilde C))$.

 Let $\widetilde C=(C,x_1,\ldots, x_s,f,\phi)$ be an element of
 $\RR\C(\omega_r)$. The same
 argument as before shows  that $\mu^\RR_r(\widetilde C)$
and  $\mu^\RR_r(\Phi(\widetilde C))$ have equal absolute values.
It remains us to prove that both signs coincide, and the only
thing to check is that the number $o_r^\RR$ is even.  If $v$ is a
4-valent vertex of $C$ adjacent to an edge $e$ in $\Im(\widetilde C)$, then
$\mu^\RR(v)=w_{f,e}$. If $v$ is a
3-valent vertex in $\Re(\widetilde C)$ adjacent to an elevator $e$, then
$\mu^\CC(v)=w_{f,e}$. So if  $\mu^\CC(v)=3 \ mod \ 4$, then $e$ is bounded
and the other vertex $v'$ adjacent to $e$ satisfy also $\mu^\CC(v)=3
\ mod \ 4$. Hence the number $o_r^\RR$ is even as announced. 
\end{proof}

\section{Some applications}\label{application}

Here we use floor diagrams to confirm some results in classical
enumerative geometry.

\subsection{Degree of the discriminant hypersurface of the space of
  plane curves}

\begin{figure}[h]
\begin{center}
\begin{tabular}{c}
\includegraphics[height=5cm, angle=0]{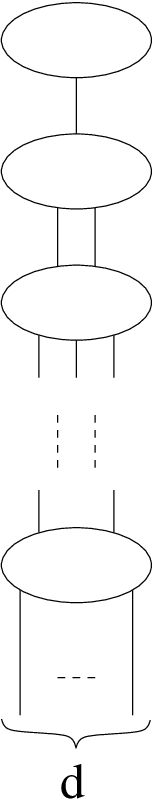}

\end{tabular}
\end{center}
\caption{Unique floor diagram of
  maximal genus and Newton polygon $\Delta_d$}
\label{max genus}
\end{figure}

\begin{prop}
For any $d\ge 3$, one has
$$N(\Delta_d,\frac{(d-1)(d-2)}{2}-1)= 3(d-1)^2 $$
\end{prop}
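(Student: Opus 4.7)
The plan is to apply Theorem~\ref{NFD}: $N(\Delta_d,g)$ for $g=\binom{d-1}{2}-1$ equals $\sum\mu^\CC(\mathcal D)$ taken over marked floor diagrams of genus $g$ and Newton polygon $\Delta_d$. Since $d_l(\Delta_d)=\{0,\ldots,0\}$, $d_r(\Delta_d)=\{1,\ldots,1\}$, $d_-(\Delta_d)=d$, $d_+(\Delta_d)=0$, every such $\mathcal D$ has $d$ vertices (each with $\theta=0$ and divergence $1$), $d$ unit-weight non-compact leaves entering from $-\infty$, and exactly $\binom{d}{2}-1$ bounded edges---one fewer than the unique maximal-genus diagram $\mathcal D_{\max}$ of Figure~\ref{max genus}.

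The first main step is to classify these diagrams. Writing a topological labeling $v_1<\cdots<v_d$, letting $a_{ij}$ denote the total weight of edges from $v_i$ to $v_j$ for $i<j$, and $b_k\ge 0$ the number of leaves at $v_k$, the divergence equations together with $b_k\ge 0$ force $a_{k-1,k}\le a_{k,k+1}+1$ and $a_{d-1,d}=1$. A straightforward flow argument (adding an edge $v_i\to v_j$ of weight $w$ with $j-i\ge 2$ forces removing $w$ units of weight from each of the consecutive bundles $v_kv_{k+1}$ for $i\le k<j$, for a net edge-count change of $1-(j-i)w$) shows that the only way to decrease the edge count by exactly one from $\mathcal D_{\max}$ is by one of three elementary operations: a \emph{merge} $\mathcal M_i$ ($1\le i\le d-2$), replacing two of the $d-i$ parallel weight-$1$ edges between $v_i$ and $v_{i+1}$ by a single weight-$2$ edge; the \emph{drop} $\mathcal D_0$, reducing $a_{12}$ from $d-1$ to $d-2$ (equivalently moving one leaf from $v_1$ to $v_2$); or a \emph{skip} $\mathcal S_j$ ($1\le j\le d-2$), removing one edge from each of the bundles $v_jv_{j+1}$ and $v_{j+1}v_{j+2}$ and adding a direct weight-$1$ edge $v_j\to v_{j+2}$. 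This yields $2d-3$ diagrams up to equivalence.

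Next one computes $\mu^\CC(\mathcal M_i)=4$ and $\mu^\CC(\mathcal D_0)=\mu^\CC(\mathcal S_j)=1$. The number of equivalence classes of markings equals the number of linear extensions of the natural partial order on $\text{Vert}(\mathcal D)\cup\text{Edge}(\mathcal D)$ divided by $|\text{Aut}(\mathcal D)|$; the base ``chain of antichains of parallel edges'' contributes a factor equal to $|\text{Aut}(\mathcal D)|$, so the final ratio comes entirely from the number of insertion positions for the distinguished element (the weight-$2$ edge, the displaced leaf, or the skip edge). This yields $\#\text{markings}(\mathcal M_i)=d-i-1$, $\#\text{markings}(\mathcal D_0)=2d-1$, $\#\text{markings}(\mathcal S_j)=2(d-j)-1$. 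Summing,
$$\sum_{i=1}^{d-2}4(d-i-1)+(2d-1)+\sum_{j=1}^{d-2}(2(d-j)-1)=2(d-1)(d-2)+(2d-1)+d(d-2)=3(d-1)^2.$$

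The principal obstacle is the completeness of the classification: one must rule out longer skip edges ($v_j\to v_{j+k}$ with $k\ge 3$, which require removal of at least $k\ge 3$ chain edges) and all combinations of two or more independent local modifications, each of which would reduce the genus by an additional unit. Once these are excluded, the marking counts and the final algebraic identity are a routine, if careful, verification; a pleasant sanity check is that this recovers the classical values $N(\Delta_3,0)=12$ and $N(\Delta_4,2)=27$.
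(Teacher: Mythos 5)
Your proof is correct and takes essentially the same route as the paper: both classify the genus-$\bigl(\frac{(d-1)(d-2)}{2}-1\bigr)$ diagrams as single local modifications of the unique maximal-genus diagram and then count markings of each. The only cosmetic difference is that the paper bundles your ``drop'' and ``skip'' operations into a single move (its move (b), with $d-1$ instances contributing $\sum_{i=2}^{d}(2i-1)=d^2-1$, which matches your $(2d-1)+d(d-2)$), and your marking counts $d-i-1$ and $2(d-j)-1$ agree with its $i-1$ and $2i-1$ after reindexing.
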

\begin{proof}
We see easily that the unique floor diagram $\D_{max}$ of genus
$\frac{(d-1)(d-2)}{2}$ and Newton polygon
$\Delta_d$  is the one depicted in
Figure \ref{max genus}. Moreover, all  floor diagrams of genus
$\frac{(d-1)(d-2)}{2}-1$ and Newton polygon
$\Delta_{d}$ are obtained by
decreasing the genus of $\D_{max}$ via one of the 2 moves depicted in
Figure \ref{decrease genus}. There are $i-1$ different markings of
the floor diagram obtained via the move of Figure \ref{decrease
  genus}a, and  $2i+1$ different markings of
the floor diagram obtained via the move of Figure \ref{decrease
  genus}b. Then we get
$$ \begin{array}{lll}
N(d,\frac{(d-1)(d-2)}{2}-1)&=& \sum_{i=2}^{d-1}4(i-1)
+\sum_{i=2}^{d}(2i-1)
\\
\\&=& 3(d-1)^2
\end{array}$$
\end{proof}

\begin{figure}[h]
\begin{center}
\begin{tabular}{cccc}
\includegraphics[height=2cm, angle=0]{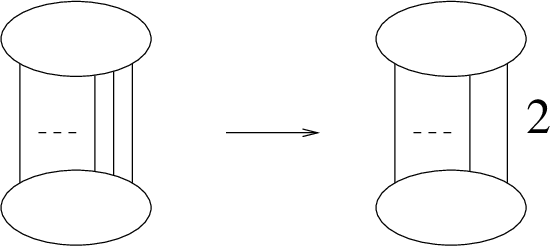}&
\hspace{10ex}  &
\includegraphics[height=1.5cm, angle=0]{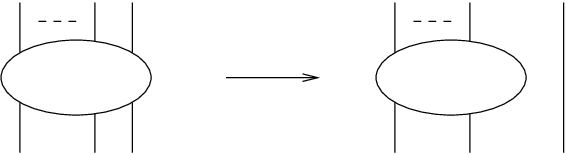}
\\ a) $i$ edges $\to$ $i-1$ edges &&b)  $i$ outgoing edges $\to$
$i-1$ outgoing edges
\\ $\mu^\CC=4$ && $\mu^\CC=1$
\end{tabular}
\end{center}
\caption{Decrease by 1 the genus of the floor diagram  of maximal genus}
\label{decrease genus}
\end{figure}

\subsection{Asymptotic of Welschinger invariants}

In \cite{Mik1}, a combinatorial algorithm in terms of \textit{lattice
  paths} has been
given to enumerate complex and real curves in toric surfaces. The idea
is that when we consider (the right number of) points which are
sufficiently far one from the other but on the same
line $L$ with irrational slope, then all tropical curves passing through
these points can be recovered inductively.
Hence, if $L$ is the line with equation $x+\varepsilon y$ with $y$ a very
small irrational number, then lattice paths and floor diagrams are two
ways to encode
the same tropical curves. However, in our opinion, floor diagrams are
much easier to deal with. In particular, one does not have to consider
reducible curves using floor diagrams.

As an example, we give a floor diagram proof of the following
theorem that was initially proved with the help of the lattice paths.

\begin{thm}[Itenberg, Kharlamov, Shustin \cite{IKS1} \cite{IKS2}]
The sequence $(W(\Delta_d,0))_{d\ge 1}$ satisfies the following properties :

\begin{itemize}
\item[$\bullet$] it is a sequence of positive numbers,

\item[$\bullet$] it is an increasing sequence, and strictly increasing
  starting from $d=2$,

\item[$\bullet$] one has $\ln  W(\Delta_d,0) \sim \ln N(\Delta_d,0)
  \sim 3d \ln  d$ when $d$ goes to infinity.

\end{itemize}
\end{thm}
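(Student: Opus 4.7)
The proof uses Theorem \ref{WFD} together with the observation, noted just after Definition \ref{defi real}, that $\mu^\RR_0(\mathcal D,m)\in\{0,1\}$ and equals $1$ exactly when $\mu^\CC(\mathcal D)=\prod_e w(e)^2$ is odd, that is, when every edge of $\mathcal D$ has odd weight. Thus $W(\Delta_d,0)$ equals the number of equivalence classes of marked floor diagrams of genus $0$ and Newton polygon $\Delta_d$ all of whose edges have odd weight; denote this set by $\mathcal M_d$.

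\textit{Positivity.} I exhibit an explicit element of $\mathcal M_d$: the caterpillar $\mathcal D_d$ with vertices $v_1,\dots,v_d$ joined by weight-$1$ compact edges $v_1\to v_2\to\cdots\to v_d$, with $\theta(v_i)=0$, two $-\infty$ leaves at $v_1$, one $-\infty$ leaf at each $v_i$ for $2\le i\le d-1$, and no leaf at $v_d$. The conditions of Definition \ref{def fd} are routine to check, and any linear extension of the associated partial order provides a valid marking. Hence $W(\Delta_d,0)\ge 1$.

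\textit{Monotonicity.} I construct an injection $\Phi:\mathcal M_d\hookrightarrow\mathcal M_{d+1}$. Given $(\mathcal D,m)\in\mathcal M_d$ with $s=3d-1$, the maximal entry $v_{\max}:=m(s)$ is necessarily a vertex of $\mathcal D$, since $d_+(\Delta_d)=0$. Adjoin to $\mathcal D$ a new vertex $v^*$ with $\theta(v^*)=0$, a weight-$1$ compact edge $e^*$ from $v_{\max}$ to $v^*$, and a weight-$1$ leaf $\ell^*\in\text{Edge}^{-\infty}$ attached to $v_{\max}$; call the result $\mathcal D'$. The divergence at $v_{\max}$ is unchanged, $\text{div}(v^*)=1$, the number of $-\infty$ leaves grows from $d$ to $d+1$, and the $\theta$-values form $d_l(\Delta_{d+1})$, so $\mathcal D'$ is a floor diagram of genus $0$ and Newton polygon $\Delta_{d+1}$. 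Extend $m$ to $m'$ by $m'(i)=m(i)$ for $i\le s-1$, $m'(s)=\ell^*$, $m'(s+1)=v_{\max}$, $m'(s+2)=e^*$, $m'(s+3)=v^*$: the new partial-order relations $\ell^*<v_{\max}<e^*<v^*$ appear in that order, and the new elements are incomparable to any element of $\mathcal D$ lying strictly below $v_{\max}$, so $m'$ is increasing. The added edges having weight $1$, $\mu^\RR_0(\mathcal D',m')=1$, and $\Phi$ is well defined on equivalence classes. Injectivity is clear: recover $v^*=m'(s')$ with $s'=3(d+1)-1$, then $e^*$ as the unique compact edge adjacent to $v^*$, $v_{\max}$ as its other endpoint, and $\ell^*=m'(s'-3)$.

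For strict monotonicity at $d\ge 2$ it suffices to exhibit an element of $\mathcal M_{d+1}\setminus\Phi(\mathcal M_d)$. Every diagram in $\Phi(\mathcal M_d)$ has a unique maximal vertex, namely $v^*$. For $d+1\ge 3$ the star diagram in $\mathcal M_{d+1}$ with a single bottom vertex $v_0$ carrying $d+1$ $-\infty$ leaves and $d$ outgoing weight-$1$ compact edges to top vertices $v_1,\dots,v_d$ has $d\ge 2$ maximal vertices, so no marking of it lies in $\Phi(\mathcal M_d)$. Hence $W(\Delta_{d+1},0)>W(\Delta_d,0)$ for $d\ge 2$.

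\textit{Asymptotic.} Since $|\mu^\RR_0|\le 1\le\mu^\CC$ term-by-term, Theorems \ref{NFD} and \ref{WFD} give $W(\Delta_d,0)\le N(\Delta_d,0)$. The asymptotic $\ln N(\Delta_d,0)\sim 3d\ln d$ is classical, derived from the Kontsevich recursion \cite{KonMan1}; this already yields $\limsup_d \ln W(\Delta_d,0)/(3d\ln d)\le 1$.

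For the matching lower bound, count odd-weight marked floor diagrams in $\mathcal M_d$ directly. The number of labeled trees on $d$ vertices is $d^{d-2}$ by Cayley, and a positive fraction admit acyclic orientations and compatible odd-weight assignments forming floor diagrams of $\Delta_d$. For each such diagram the associated forest poset on the $3d-1$ vertices and edges admits many linear extensions, bounded below by Knuth's hook-length formula; summing over underlying structures gives $\ln|\mathcal M_d|\ge 3d\ln d-o(d\ln d)$.

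\textit{Main obstacle.} The sharpest point is this last lower bound. A count restricted to weight-$1$ diagrams alone yields only $\sim 2d\ln d$ in logarithm, because the divergence constraint $\text{div}(v)=1$ on $\Delta_d$-diagrams tightly restricts weights: a vertex admits an outgoing edge of weight strictly greater than $1$ only when it has several incoming compact edges or many incoming $-\infty$ leaves. One must therefore carefully account for odd-weight diagrams with higher weights concentrated near sufficiently branched vertices, and estimate the combined count with enough precision to recover the leading order $3d\ln d$; this combinatorial refinement is the principal challenge of the asymptotic statement.
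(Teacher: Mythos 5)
Your reduction to counting marked floor diagrams with all weights odd, your explicit caterpillar for positivity, and your injection $\Phi$ (adding a floor at the top of the diagram, where the paper's proof adds one at the bottom) are all sound and essentially parallel the paper. The genuine gap is the lower bound in the asymptotic statement. Your argument there --- ``a positive fraction'' of Cayley's $d^{d-2}$ trees admit compatible orientations and weights, the number of linear extensions is ``bounded below by Knuth's hook-length formula'', and ``summing over underlying structures gives $\ln|\mathcal M_d|\ge 3d\ln d-o(d\ln d)$'' --- is a sequence of unquantified assertions, not a proof, and your closing paragraph concedes exactly this: the labeled-tree count only contributes $d\ln d$ to the logarithm, and a single poset on $3d-1$ elements must therefore supply close to the full $(3d-1)!$ worth of linear extensions, which fails for caterpillar-like shapes and has to be extracted from suitably balanced ones. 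The paper resolves this with a concrete construction: a recursively defined sequence of diagrams $\mathcal D_k$ of degree $2^{k-1}$ (each obtained by grafting a fixed two-floor piece onto every edge of $\text{Edge}^{-\infty}(\mathcal D_{k-1})$), for which the number of markings $\nu(\mathcal D_k)$ satisfies an explicit recursion $\nu(\mathcal D_k)=\tfrac{1}{2}\nu(\mathcal D_{k-1})^2\binom{3\cdot 2^{k-1}-4}{3\cdot 2^{k-2}-2}$ whose solution has logarithm $\sim 3\cdot 2^{k-1}\ln(2^{k-1})$ by Stirling; monotonicity then interpolates between powers of $2$. Some such explicit family, with an actual computation of its number of markings, is the missing idea; note also that the asymptotic $\ln N(\Delta_d,0)\sim 3d\ln d$ is taken from Di Francesco--Itzykson rather than from the Kontsevich recursion directly.

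A secondary but real flaw: in the strict-monotonicity step you assert that every diagram in $\Phi(\mathcal M_d)$ has a unique maximal vertex. This is false for $d\ge 3$: the diagram in $\mathcal M_3$ with one bottom floor carrying all three $-\infty$ leaves and two top floors has two maximal vertices, and so does its image under $\Phi$. Your witness (the star in $\mathcal M_{d+1}$) does lie outside $\Phi(\mathcal M_d)$, but for a different reason, e.g.\ because in any image $(\mathcal D',m')$ of $\Phi$ the element $m'(s'-3)$ is a $-\infty$ leaf, whereas for the star with $d\ge2$ the leaves occupy only the positions $1,\dots,d+1<s'-3$. So the conclusion stands but the stated justification does not; this should be repaired.
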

\begin{proof}
As we have $\mu^\RR_0=1$ for any floor
diagram, the numbers $W(\Delta_d,0)$ are all non-negative. Moreover,
we have $W(\Delta_1,0)=1$ so
the
positivity of these numbers will follow from the increasingness of the
sequence  $(W(\Delta_d,0))_{d\ge 1}$.

Let $(\mathcal D_0,m_0)$ be a marked floor diagram of genus 0 and
Newton polygon $\Delta_d$.
For convenience we use marking $m_0:\{4,\ldots,3d+2\}\to D_0$
(instead of the ``usual" marking $\{1,\ldots,3d-1\}\to\mathcal D_0$).
Note that the point $4$ has to be mapped to an edge
in $\text{Edge}^{-\infty}(\D_0)$. Out of
 $\mathcal D_0$, we can construct a new marked floor diagram $\D$ of
genus 0 and Newton polygon $\Delta_{d+1}$ as indicated in
 Figure \ref{FDW}a. Both real multiplicities $\mu^\RR_0(\D_0)$ and
 $\mu^\RR_0(\D)$ are the same, and two distinct marked floor diagrams
 $\mathcal D_0$  and $\mathcal D_0'$  give rise to two distinct marked
 floor diagrams
 $\mathcal D$  and $\mathcal D'$. Hence, we have $W(\Delta_{d+1},0)\ge
 W(\Delta_{d},0)$ for all $d\ge 1$. Moreover, if $d\ge 2$ then there
 exist marked  floor diagrams with Newton polygon $\Delta_{d+1}$ which are not
 obtained out of a marked floor diagrams with Newton polygon
 $\Delta_{d}$ as described above. An example is given in Figure
\ref{FDW}b, hence $W(\Delta_{d+1},0)>W(\Delta_d,0)$ if $d\ge 2$.

\begin{figure}[h]
\begin{center}
\begin{tabular}{ccc}
\includegraphics[height=4cm, angle=0]{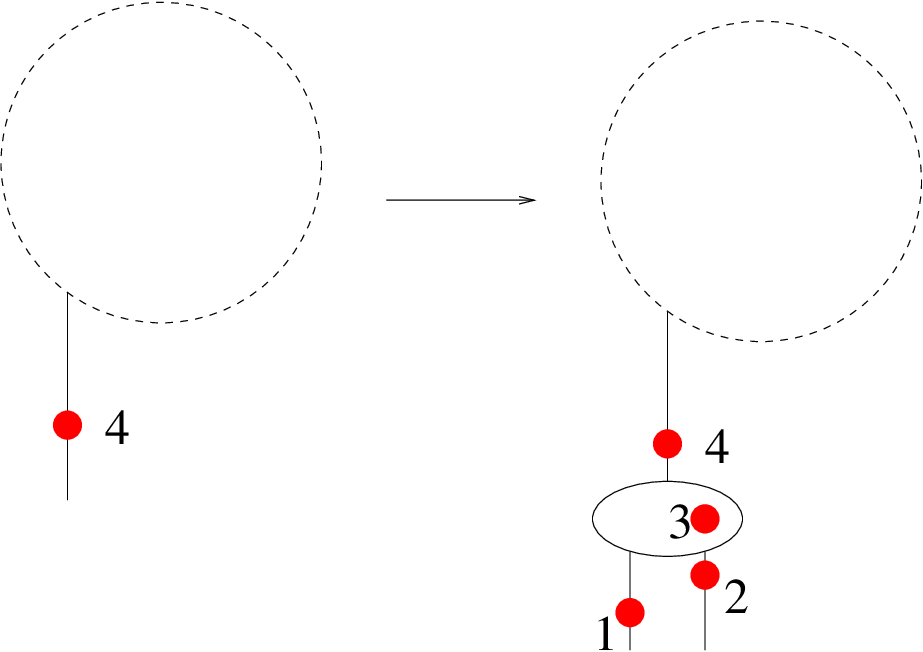}&
\hspace{10ex}  &
\includegraphics[height=5cm, angle=0]{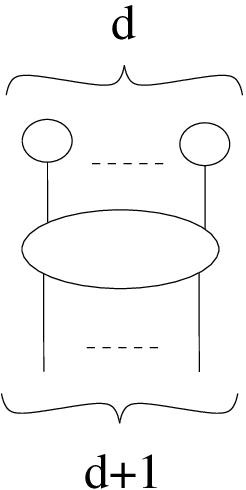}
\\
\\ a) From $\Delta_d$ to $\Delta_{d+1}$ && b) Not obtained from  $\Delta_d$

\end{tabular}
\end{center}
\caption{The numbers $W(\Delta_d,0)$ are increasing}
\label{FDW}
\end{figure}

We study now the logarithmic asymptotic of the sequence
$(W(\Delta_d,0))_{d\ge 1}$. For simplicity, we only
  treat the case of the subsequence $(\log(W(\Delta_{2^{k}},0))_{k\ge 1}$. The
  general case does not require additional idea, but is more
  technical.
  The interested reader is referred to
  \cite[Appendice]{Br10} for a complete proof in the floor diagrams setting.
Let $(\mathcal D_d)_{d\ge 1}$  be the
sequence of floor diagrams
constructed inductively in the following way : $\mathcal D_1$ is the
floor diagram
with Newton polygon $\Delta_1$, and $\mathcal D_d$ is obtained out of
 $\mathcal D_{d-1}$ by gluing to each edge in $\text{Edge}^{-\infty}(\D_{d-1})$ the
piece depicted in Figure \ref{FDWa}a. Floor diagrams
$\mathcal D_1$, $\mathcal D_2$, $\mathcal D_3$, and $\mathcal D_4$ are
depicted in  Figures  \ref{FDWa}b, c, d et e. The floor diagram $\D_d$
is of degree  $2^{d-1}$ and we have $\mu^\RR_0( \D_d)=1$. If
 $\nu(\mathcal D_d)$ denotes the number of distinct markings of $\D_d$,
then we have

$$ \begin{array}{llll}
\forall d \ge 2 \hspace{4ex}& \nu(\mathcal D_d) &=& \frac{\nu(\mathcal
  D_{d-1})^2}{2}C_{3\times 2^{d-1}-4}^{3\times 2^{d-2}-2}
\\
\\& &=& \frac{(3\times 2^{d-1}-4)!}{2^{2^{d-1}-1}}\prod_{i=2}^{d}\frac{1}{\left((3\times 2^{d-i}-2)(3\times 2^{d-i}-3)\right)^{2^{i-1}}}

\end{array}$$

Hence we get

$$\frac{(3\times 2^{d-1}-4)!}{2^{2^d }\prod_{i=1}^{d}(3\times
  2^{d-i})^{2^{i}}} \le \nu(\mathcal D_d) \le  (3\times 2^{d-1}-4)! $$

The Stirling Formula implies that
 $\ln d!\sim d\ln d$, and we see easily that both right and left
hand side of the inequality have the same logarithmic asymptotic,
namely
 $ 3\times
2^{d-1}\ln (2^{d-1})$. As we have $\ln \nu(\mathcal D_d)\le \ln W(\Delta_{2^{d-1}},0)
\le \ln N(\Delta_{2^{d-1}},0)$, the result follows from
the equivalence
$\ln(N(\Delta_d,0))\sim 3d\ln
d$ proved in  \cite{DiFrIt}.
\end{proof}

\begin{figure}[h]
\begin{center}
\begin{tabular}{ccccccccc}
\includegraphics[height=1.5cm, angle=0]{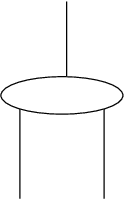}&
\hspace{2ex}  &
\includegraphics[height=1.3cm, angle=0]{Figures/FD1.eps}&
\hspace{2ex}  &
\includegraphics[height=2cm, angle=0]{Figures/FD2.eps}&
\hspace{2ex}  &
\includegraphics[height=3cm, angle=0]{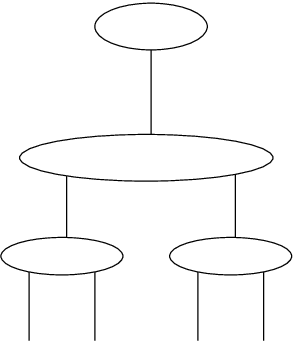}&
\hspace{2ex}  &
\includegraphics[height=4cm, angle=0]{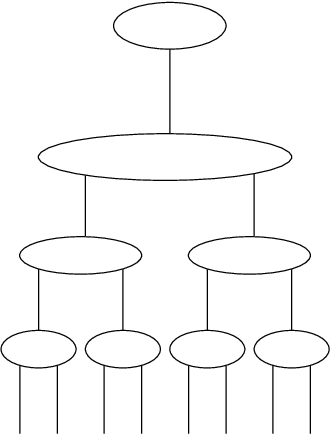}
\\
\\ a) &&b) $\mathcal D_1$ &&
c) $\mathcal D_2$&&d) $\mathcal D_3$&& e) $\mathcal D_4$
\end{tabular}
\end{center}
\caption{Asymptotic of the numbers $W(\Delta_d,0)$}
\label{FDWa}
\end{figure}

\subsection{Recursive formulas}

Floor diagrams allow one to write down easily recursive formulas in a
Caporaso-Harris style (see \cite{CapHar1}) for both complex and real
enumerative invariants. The recipe to extract such
formulas is explained in \cite{Br8} in the particular case of the
numbers $W(\Delta_d,r)$.

As an example we briefly outline here how to reconstruct Vakil's formula \cite{Vak2},
which relates some enumerative invariants of Hirzebruch surfaces.

The
Hirzebruch surface $\FF_n$ of degree $n$, with $n\ge 0$, is the
compactification of the line bundle over $B=\mathbb CP^1$ with first
Chern class $n$. If $\FF_n\supset F\approx \mathbb CP^1$ denotes the
compactification of a fiber, then the second homology group of $\FF_n$
is the free abelian group generated by $B$ and $F$.
In a suitable coordinate system, a  generic algebraic curve in $\FF_n$
of class $aB+
bF$, with $a,b\ge 0$, has the $h$-transverse Newton polygon
$\Delta_{n,a,b}$ with vertices
$(0,0)$, $(na +b,0)$, $(0,a)$, and $(b,a)$ (see \cite{Beau} for more
details about Hirzebruch surfaces).

Before stating the theorem, we need to introduce some notations. In
the following, $\alpha=(\alpha_1,\alpha_2,\ldots)$  denotes a sequence
of non-negative integers, and we set
$$|\alpha|=\sum_{i=1}^\infty \alpha_i, \ \ \ \ \ \
I\alpha = \sum_{i=1}^\infty i\alpha_i, \ \ \ \ \ \
I^\alpha = \prod_{i=1}^\infty i^{\alpha_i} $$

If $a$ and $b$ are two integer numbers, $\left(\begin{array}{c}
a\\ b\end{array} \right) $ denotes the binomial coefficient. If $a$
and $b_1,b_2,\ldots,b_k$ are
integer numbers then $\left(\begin{array}{c}
a
\\ b_1,\ldots, b_k
\end{array} \right) $ denotes the multinomial coefficient, i.e.
$$\left(\begin{array}{c}
a
\\ b_1,\ldots, b_k
\end{array} \right) = \prod_{i=1}^{k}\left(\begin{array}{c}
a -\sum_{j=1}^{i-1}b_j
\\ b_i
\end{array} \right)$$

\begin{thm}[Vakil, \cite{Vak2}]\label{vakil}
For any $n\ge 0$, any $g\ge 0$, and any $b\ge 1$, one has
$$
N(\Delta_{n,2,b},g) =  N(\Delta_{n+1,2,b-1},g) \text{\hspace{60ex}}$$
$$ \ \ \ \ \ \  \ \ \ \ \ \ \ \ \ +
\sum_{\begin{array}{c}I\beta \le n \\ |\beta|=g+1\end{array}}
\left(\begin{array}{c}2n+2b + g+2 \\ n-I\beta \end{array}  \right)
\left(\begin{array}{c}\beta_1+b \\ b \end{array}  \right)
\left(\begin{array}{c}|\beta|+b \\ \beta_1+b,\beta_2,\beta_3,\ldots  \end{array}  \right)
I^{2\beta}
$$
\end{thm}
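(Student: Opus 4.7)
My strategy is to partition the marked floor diagrams of genus $g$ and Newton polygon $\Delta_{n,2,b}$ into two classes: the first will be placed in natural bijection with marked floor diagrams of genus $g$ and Newton polygon $\Delta_{n+1,2,b-1}$ (thereby contributing the term $N(\Delta_{n+1,2,b-1},g)$), and the second, complementary class will contribute exactly the sum in Vakil's formula. This realizes the Caporaso--Harris-style recipe for floor diagrams explained in \cite{Br8}.

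I begin by spelling out the structure of the floor diagrams in question. Every such $\mathcal D$ has exactly two floors $v_1<v_2$ with $\theta(v_1)=\theta(v_2)=0$, joined by $g+1$ elevators; recording the multiset of elevator weights by $\beta=(\beta_1,\beta_2,\ldots)$ I have $|\beta|=g+1$ and $\mu^{\CC}(\mathcal D)=I^{2\beta}$. Writing $b_1,d$ for the numbers of $+\infty$-leaves at $v_1,v_2$ (so $b_1+d=b$) and $a,c$ for the corresponding numbers of $-\infty$-leaves, the divergence conditions $\text{div}(v_i)=n$ force $a=n+b_1+I\beta$ and $c=n-I\beta+d$.

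Next I set up the bijection by inspecting the element $m(s)$, which is maximal in the partial order and hence either a $+\infty$-leaf at $v_1$, a $+\infty$-leaf at $v_2$, or the vertex $v_2$ itself (the last case being possible only when $d=0$). Two elementary transformations handle the first two possibilities: \emph{(i)} if $m(s)$ is a $+\infty$-leaf at $v_2$, remove that edge and insert a fresh $-\infty$-leaf at $v_1$; \emph{(ii)} if $m(s)$ is a $+\infty$-leaf at $v_1$, remove it and insert a fresh $-\infty$-leaf at $v_2$. In both cases I assign label $1$ to the new leaf and shift every other label up by one; this preserves the increasing condition because the new leaf is minimal. A divergence check shows the resulting diagram is a marked floor diagram with Newton polygon $\Delta_{n+1,2,b-1}$ of the same complex multiplicity $I^{2\beta}$, and the inverse is governed by the location of $m'(1)$: one inverts \emph{(i)} if $m'(1)$ sits on a $-\infty$-leaf at $v_1$, and \emph{(ii)} if at $v_2$. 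This yields a $\mu^{\CC}$-preserving bijection between the subset of marked diagrams of $\Delta_{n,2,b}$ with $m(s)$ on a $+\infty$-leaf and \emph{all} marked diagrams of $\Delta_{n+1,2,b-1}$.

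For the complementary class I count directly. The condition $m(s)=v_2$ forces $d=0$, hence $b_1=b$, $c=n-I\beta$ (so automatically $I\beta\le n$) and $a=n+b+I\beta$. For each admissible $\beta$ the unmarked diagram is unique, and I count markings with $v_2$ pinned at position $s$ as follows: first choose the $c=n-I\beta$ positions (among the first $s-1$) to hold the indistinguishable bottom leaves at $v_2$, giving $\binom{s-1}{n-I\beta}$; within the remaining $s-1-c$ positions, the $a$ indistinguishable bottom leaves at $v_1$ must fill the first $a$ slots, $v_1$ occupies the next slot, and the final $g+1+b$ slots are filled by the $b$ indistinguishable $+\infty$-leaves of $v_1$ together with the elevators (of which $\beta_i$ have weight $i$), giving $\tfrac{(g+1+b)!}{b!\prod_i\beta_i!}$ orderings. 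Their product equals
\[
\binom{2n+2b+g+2}{n-I\beta}\binom{\beta_1+b}{b}\binom{g+1+b}{\beta_1+b,\beta_2,\ldots},
\]
and multiplying by $\mu^{\CC}=I^{2\beta}$ and summing over $\beta$ with $|\beta|=g+1$, $I\beta\le n$ reproduces Vakil's sum. The main obstacle is the verification in the third paragraph that the relabeling respects the increasing condition and descends to equivalence classes of marked diagrams; the enumeration above is then a routine linear-extension computation on the reduced poset.
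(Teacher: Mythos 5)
Your proof is correct and follows essentially the same route as the paper: split according to whether $m(s)$ is the top floor or a $+\infty$-leaf, convert the latter case into a marked floor diagram for $\Delta_{n+1,2,b-1}$ by trading the $+\infty$-leaf for a $-\infty$-leaf on the other floor with label $1$, and count the former case directly. Your explicit linear-extension count $\binom{s-1}{n-I\beta}\cdot\frac{(g+1+b)!}{b!\prod_i\beta_i!}$ correctly reproduces the summand (the paper merely asserts this enumeration is "easy"), so no changes are needed.
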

\begin{proof}
We want to enumerate marked floor diagrams of genus $g$ and Newton
polygon $\Delta_{n,2,b}$. As these floor diagrams have only two
floors, our task is easy. Let $\D$ be such a marked floor
diagrams of genus $g$ and Newton
polygon $\Delta_{n,2,b}$. Then, the marking $m$ is defined on the set
$\{1,\ldots, s\}$ where $s=2(n+2) + 2b -1 +g$.

 Suppose that $m(s)$ is a floor of $\D$. These
  marked floor diagrams are easy to enumerate, their contribution to the
  number $N(\Delta_{n,2,b},g)$ is the second term on the
  right hand side of the equality.

 Suppose that $m(s)$ is on an edge $e$ in
  $\text{Edge}^{+\infty}(\D)$. Define a new floor diagram $\D'$ as
  follows : $\text{Vert}(\D')=\text{Vert}(\D')$,
  $\text{Edge}(\D')=(\text{Edge}(\D) \setminus \{e\})\cup
  \{e'\}$,
  where $e'$ is in $\text{Edge}^{-\infty}(\D)$ and is adjacent to the other floor than $e$.
  Define a marking $m'$ on $\D'$ as follows : $m'(i)=m(i-1)$ if $i\ge
  2$ and $m(1)\in e'$. Now, the marked floor diagram $(\D',m')$ is of
  genus $g$ and Newton polygon $\Delta_{n+1,2,b-1}$ (see
  Figure \ref{blow up down}a). Moreover, we
  obtain in this way a bijection between the set of marked floor
  diagrams  of genus $g$ and Newton
polygon $\Delta_{n,2,b}$ such that $m(s)\in\text{Edge}^{+\infty}(\D)$,
and marked floor diagrams of
  genus $g$ and Newton polygon $\Delta_{n+1,2,b-1}$. Hence, we get the
  first term of the right hand side of the equality, and the theorem is proved.
\end{proof}

\begin{figure}[h]
\begin{center}
\begin{tabular}{ccc}
\includegraphics[height=3.5cm, angle=0]{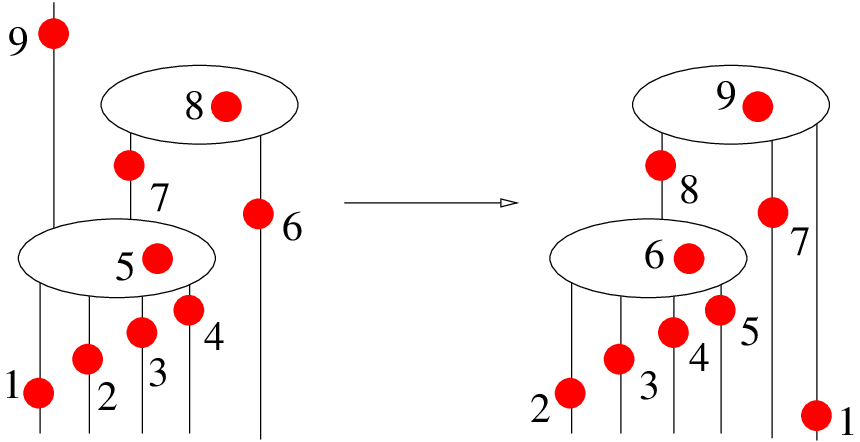}&
\hspace{3ex} &
\includegraphics[height=3.5cm, angle=0]{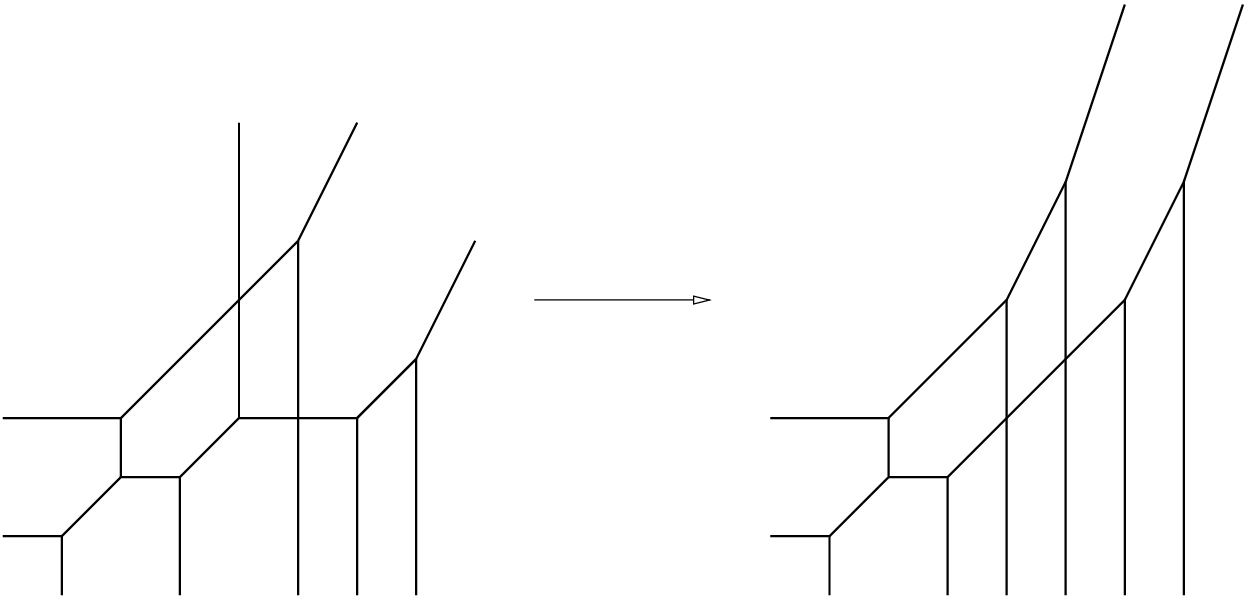}
\\
\\ a)&& b)
\end{tabular}
\end{center}
\caption{From $\FF_n$ to $\FF_{n+1}$}
\label{blow up down}
\end{figure}

\begin{rem}
Our proof of Theorem \ref{vakil} is a combinatorial game on
marked floor diagrams that can be obtained as the translation to
the floor diagram language of Vakil's original proof : take the highest point $p$ of
the configuration, and specialize it to the exceptional section
$E$. Then, either a curve $C$ we are counting breaks into 2 irreducible components,
which give the second term, or $C$ has now a prescribed point on
$E$. Blowing up this point and blowing down the strict transform of
the fiber, the curve is transformed to a curve in $\FF_{n+1}$ with a
prescribed point on $B$ (which is the image under the blow down of the
second intersection
point of $C$ with the fiber).

The effect of such a blow up and down in tropical geometry can be easily
seen, since intersection points with $E$ correspond to leaves
going
up, and intersection with $B$ correspond to leaves going
down. An example is given in Figure \ref{blow up down}b which
correspond to the operation on marked floor diagram depicted in Figure
\ref{blow up down}a.
\end{rem}

\section{Further computations}\label{further}

One can adapt the technics of this paper to compute other real and
complex enumerative
invariants of algebraic varieties. In addition to genus 0 Gromov-Witten
invariants  and Welschinger invariants of higher dimensional
spaces, as announced in \cite{Br7}, one can compute in this way
characteristic numbers of the projective plane (at least in genus 0
and 1), as well as Gromov-Witten and Welschinger invariants of the
blown up projective plane.
Details will appear soon.

\small
\def\rightmark{\em Bibliography}
\addcontentsline{toc}{section}{References}

\bibliographystyle{alpha}
\bibliography{../../Biblio.bib}

\end{document}